\newcommand{\bC}{\mathbb{C}}
\newcommand{\bN}{\mathbb{N}}
\newcommand{\bR}{\mathbb{R}}
\newcommand{\bE}{\mathbb{E}}
\newcommand{\bP}{\mathbb{P}}
\newcommand{\CA}{\mathcal{A}}
\newcommand{\CB}{\mathcal{B}}
\newcommand{\CK}{\mathcal{K}}
\newcommand{\CL}{\mathcal{L}}
\newcommand{\CM}{\mathcal{M}}
\newcommand{\CN}{\mathcal{N}}
\newcommand{\CR}{\mathcal{R}}
\newcommand{\BB}{\mathbf{B}}
\newcommand{\BK}{\mathbf{K}}
\newcommand{\FM}{\mathfrak{M}}
\newcommand{\FR}{\mathfrak{R}}
\newcommand{\sB}{\mathsf{B}}
\newcommand{\sC}{\mathsf{C}}
\newcommand{\sA}{\mathsf{A}}
\newcommand{\sU}{\mathsf{U}}
\newcommand{\sV}{\mathsf{V}}
\newcommand{\ra}{{\bf{a}}}
\newcommand{\rb}{{\bf{b}}}
\newcommand{\rc}{{\bf{c}}}
\newcommand{\rp}{{\bf{p}}}
\renewcommand{\rq}{{\bf{q}}}
\newcommand{\rr}{{\bf{r}}}
\newcommand{\rA}{{\bf{A}}}
\newcommand{\rCM}{\text{\mathversion{bold}$\CM$}}
\newcommand{\rf}{{\bf{f}}}
\newcommand{\rg}{{\bf{g}}}
\newcommand{\rh}{{\bf{h}}}
\def\newrefformat#1#2{%
  \@namedef{pr@#1}##1{#2}}
\def\fref#1{\@prettyref#1:}
\def\@prettyref#1:#2:{%
  \expandafter\ifx\csname pr@#1\endcsname\relax%
    \PackageWarning{prettyref}{Reference format #1\space undefined}%
    \ref{#1:#2}%
  \else%
    \csname pr@#1\endcsname{#1:#2}%
  \fi%
}
\def\indsym#1#2{%
  \setbox0=\hbox{$\m@th#1x$}%
  \kern\wd0%
  \hbox to 0pt{\hss$\m@th#1\mid$\hbox to 0pt{$\m@th#1^{#2}$}\hss}%
  \lower.9\ht0\hbox to 0pt{\hss$\m@th#1\smile$\hss}%
  \kern\wd0}
\newcommand{\ind}[1][]{\mathop{\mathpalette\indsym{#1}}}
\def\nindsym#1#2{%
  \setbox0=\hbox{$\m@th#1x$}%
  \kern\wd0%
  \hbox to 0pt{\hss$\m@th#1\not$\kern1.4\wd0\hss}
  \hbox to 0pt{\hss$\m@th#1\mid$\hbox to 0pt{$\m@th#1^{#2}$}\hss}%
  \lower.9\ht0\hbox to 0pt{\hss$\m@th#1\smile$\hss}%
  \kern\wd0}
\def\dotminussym#1#2{%
  \bbox0=\hbox{$\m@th#1-$}%
  \kern.5\wd0%
  \hbox to 0pt{\hss\hbox{$\m@th#1-$}\hss}%
  \raise.6\ht0\hbox to 0pt{\hss$\m@th#1.$\hss}%
  \kern.5\wd0}
\newcommand{\mynewthm}[3][]{%
  \def\PARAM{#1}
  \ifx\PARAM\empty
  \newtheorem{#2}[theorem]{#3}
  \else
  \newtheorem{#2}{#3}[#1]
  \fi
  \newtheorem*{#2*}{#3}%
  \newrefformat{#2}{#3~\ref{##1}}%
}
\newcommand{\ThmLabel}{Theorem}
\newcommand{\PrpLabel}{Proposition}
\newcommand{\LemLabel}{Lemma}
\newcommand{\FctLabel}{Fact}
\newcommand{\CorLabel}{Corollary}
\newcommand{\DfnLabel}{Definition}
\newcommand{\ConvLabel}{Convention}
\newcommand{\NtnLabel}{Notation}
\newcommand{\CstLabel}{Construction}
\newcommand{\ExmLabel}{Example}
\newcommand{\RmkLabel}{Remark}
\newcommand{\QstLabel}{Question}
\theoremstyle{plain}
\theoremstyle{definition}
\theoremstyle{remark}
\title{Randomizations of models as metric structures}
\author{Itaï \textsc{Ben Yaacov}}
\address{Itaï \textsc{Ben Yaacov} \\
  Université Claude Bernard -- Lyon 1 \\
  Institut Camille Jordan, CNRS UMR 5208 \\
  43 boulevard du 11 novembre 1918 \\
  69622 Villeurbanne Cedex \\
  France}
\urladdr{\url{http://math.univ-lyon1.fr/~begnac/}}
\thanks{First author supported by
  ANR chaire d'excellence junior THEMODMET (ANR-06-CEXC-007) and
  by the Institut Universitaire de France.}
\author {H.\ Jerome \textsc{Keisler}}
\address{H. Jerome \textsc{Keisler} \\
  Department of Mathematics \\
  University of Wisconsin \\
  480 Lincoln Drive, Madison WI 53706 \\
  U.S.A.}
\urladdr{\url{http://math.wisc.edu/~keisler/}}
\thanks{\textit{Revision} {\svnInfoRevision} \textit{of} \today}
\keywords{randomization ; metric structures ; continuous logic ; stable
  theories}
\subjclass[2000]{03C45 ; 03C90 ; 03B50 ; 03B48}
\begin{document}

\begin{abstract}
The notion of a randomization of a first order structure was introduced
by Keisler in the paper Randomizing a Model, Advances in Math.\ 1999.
The idea was to form a new structure whose elements are random elements
of the original first order structure.  In this paper we treat
randomizations as continuous structures in the sense of Ben Yaacov and
Usvyatsov.  In this setting, the earlier results show that the
randomization of a complete first order theory is a complete theory
in continuous logic that admits elimination of quantifiers and has
a natural set of axioms.  We show that the randomization operation
preserves the properties of being omega-categorical, omega-stable,
and stable.
\end{abstract}

\maketitle

\section{Introduction}
In this paper we study randomizations of first order structures in the setting
of continuous model theory.  Intuitively, a randomization of a first order
structure $\CM$ is a new structure whose elements are random elements of $\CM$.
In probability theory, one often starts with some structure $\CM$ and studies the
properties of random elements of $\CM$.  In many cases, the random elements of
$\CM$ have properties analogous to those of the original elements of $\CM$.
With this idea in mind, the paper Keisler \cite{Keisler:Randomizing} introduced the notion of a randomization
of a first order theory $T$ as a new many-sorted first order theory.
That approach pre-dated the current development of continuous structures
in  the paper Ben Yaacov and Usvyatsov \cite{BenYaacov-Usvyatsov:CFO}.

Here we formally define a randomization of a first order structure
$\CM$ as a continuous structure in the sense of \cite{BenYaacov-Usvyatsov:CFO}.  This seems to be a
more natural setting for the concept. In this setting,
the results of \cite{Keisler:Randomizing} show that if $T$ is the complete theory of $\CM$,
the theory $T^R$ of randomizations
of $\CM$ is a complete theory in continuous logic which
admits elimination of quantifiers and has a natural set of axioms.

One would expect that the original first order theory $T$
and the randomization theory $T^R$ will have similar model-theoretic
properties.  We show that this is indeed the case for the properties of
$\omega$-categoricity, $\omega$-stability, and stability.
 This provides us with a ready supply of new examples of
 continuous theories with these properties.

In Section 2 we define the randomization theory $T^R$ as a theory in
continuous logic, and restate the results we need from \cite{Keisler:Randomizing} in this
setting.  In  Section 3 we begin with a proof that
a first order theory $T$ is $\omega$-categorical if and only if $T^R$ is
$\omega$-categorical, and then we investigate separable structures.

Section 4 concerns $\omega$-stable theories.
In Subsection 4.1 we prove that a complete theory $T$ is $\omega$-stable if and only if
$T^R$ is $\omega$-stable.  In Subsection 4.2 we extend this result to the case
where $T$ has countably many complete extensions.

Section 5 is about stable theories and independence.  Subsection 5.1 contains
abstract results on fiber products of measures that will be used later.  In
Subsection 5.2 we develop some properties of stable formulas in continuous
theories.  In Subsection 5.3 we prove that a first order theory $T$ is stable
if and only if $T^R$ is stable.  We also give a characterization of independent
types in $T^R$.

In the paper \cite{BenYaacov:RandomVC} another result of this type was proved--- that $T^R$ is
dependent (does not have the independence property) as a continuous theory if and only if $T$ is dependent
as a first order theory.
In this paper we deal only with randomizations of first order structures, but it should
be mentioned that randomizations of  continuous structures
have recently been developed in the papers \cite{BenYaacov:RandomVC} and \cite{BenYaacov:RandomVariables}.

We thank the participants
of the AIM Workshop on the Model Theory of Metric Structures,
held at Palo Alto CA in 2006, and Isaac Goldbring,
for helpful discussions about this work.

\section{Randomizations}  \label{s-rand}

In this section we will restate some notions and results from \cite{Keisler:Randomizing} in the
context of continuous structures.

We will assume that the reader is familiar with continuous model theory as it is
developed in the papers \cite{BenYaacov-Usvyatsov:CFO} and \cite{BenYaacov-Berenstein-Henson-Usvyatsov:NewtonMS}, including the notions of a
structure, pre-structure, signature, theory, and model of a theory.  A \textbf{pre-model}
of a theory is a pre-structure which satisfies each statement in the theory.
 For both first order and continuous logic, we
will abuse notation by treating elements of a structure (or pre-structure) $\CM$ as constant symbols
outside the signature of $\CM$.  These constant symbols will be called \textbf{parameters} from $\CM$.

\textit{We will assume throughout  this paper that $T$ is a consistent first order theory
with  signature $L$ such that each model of $T$  has at least two elements.}


The \textbf{randomization signature} for $L$ is a two-sorted continuous signature $L^R$
with a sort $\BK$ of random elements, and a sort $\BB$ of events.
$L^R$ has an $n$-ary
function symbol $\llbracket\varphi(\cdot)\rrbracket$ of sort $\BK^n\to\BB$
for each first order formula $\varphi$ of $L$ with $n$ free variables,
a $[0,1]$-valued unary predicate symbol $\mu $ of sort $\BB$ for probability,
the Boolean operations $\top, \bot, \sqcup,\sqcap,\neg$ of sort $\BB$,
and distance predicates $d_\BK$ and $d_\BB$ for sorts $\BK, \BB$.
All these symbols are $1$-Lipschitz with respect to each argument.

We will use $\sU,\sV,\ldots$ to denote continuous variables of sort $\BB$.
We will use $x,y,\ldots$ to denote either first order variables or continuous variables
of sort $\BK$, depending on the context.
Given a first order formula $\varphi$ with $n$ free variables,
$\varphi(\overline{x})$ will denote the first order formula formed by replacing
the free variables in $\varphi$ by first order variables $\overline{x}$,
and $\llbracket\varphi(\overline{x})\rrbracket$ will denote the atomic term in $L^R$ formed by
filling the argument places of the function symbol $\llbracket\varphi(\cdot)\rrbracket$
with continuous variables $\overline{x}$ of sort $\BK$.

When working with a structure or pre-structure $(\CK,\CB)$ for $L^R$,
$\sA,\sB,\sC,\ldots$ will denote elements or parameters from $\CB$,
and $\rf,\rg,\ldots$ will denote elements or parameters from $\CK$.
Variables can be replaced by parameters of the same sort.  For example, if
$\overline{\rf}$ is  a tuple of elements of $\CK$,
$\llbracket\varphi(\overline{\rf})\rrbracket$
will be a constant term of $L^R\cup \overline{\rf}$ whose interpretation is an element of $\CB$.

We next define the notion of a randomization $(\CK,\CB)$ of $\CM$.
Informally, the elements $\sB\in\CB$ are events,  that is, measurable subsets of  some probability space $\Omega$,
and the elements $\rf\in\CK$ are random elements of $\CM$,
that is, measurable functions from  $\Omega$ into $\CM$.
By ``measure'' we will always mean ``$\sigma$-additive measure'', unless we
explicitly qualify it as ``finitely additive measure''.

Here is the formal definition.
Given a model $\CM$ of $T$, a \textbf{randomization of} $\CM$ is a pre-structure $(\CK,\CB)$ for $L^R$
equipped with a \textit{finitely additive} measure $\mu$ such that:

\begin{itemize}
\item $(\CB,\mu )$ comes from an atomless finitely
additive probability space $(\Omega,\CB,\mu )$.
\item $\CK$ is a set of functions $\rf\colon\Omega\to\CM$, i.e. $\CK\subseteq\CM^\Omega$.
\item For each formula $\psi(\overline{x})$ of $L$ and tuple
$\overline{\rf}$ in $\CK$, we have
$$ \llbracket\psi(\overline \rf)\rrbracket=\{w\in\Omega:\CM\models\psi(\overline \rf(w))\}.$$
(It follows that the right side belongs to the set of events $\CB$.)
\item For each $\sB\in\CB$ and real $\varepsilon>0$ there are $\rf,\rg\in\CK$
such that $\mu(\sB\triangle\llbracket \rf=\rg\rrbracket)<\varepsilon$,
where $\triangle$ is the Boolean symmetric difference operation.
\item For each formula $\theta(x, \overline{y})$
of $L$, real $\varepsilon > 0$, and tuple $\overline{\rg}$ in $\CK$, there exists $\rf\in\CK$ such that
$$ \mu(\llbracket \theta(\rf,\overline{\rg})\rrbracket\triangle
\llbracket(\exists x\,\theta)(\overline{\rg})\rrbracket)<\varepsilon.$$
\item On $\CK$, the distance predicate $d_\BK$ defines the pseudo-metric
$$d_\BK(\rf,\rg)= \mu \llbracket \rf\neq \rg\rrbracket .$$
\item On $\CB$, the distance predicate $d_\BB$ defines the pseudo-metric
$$d_\BB(\sB,\sC)=\mu ( \sB\triangle \sC).$$

\end{itemize}

Note that the finitely additive measure $\mu$ is determined by the pre-structure
$(\CK,\CB)$ via the distance predicates.

A randomization $(\CK,\CB)$ of $\CM$ is \textbf{full} if in addition

\begin{itemize}
\item
$\CB$ is equal to the set of all events
$ \llbracket\psi(\overline \rf)\rrbracket$
where $\psi(\overline{x})$ is a formula of $L$ and $\overline{\rf}$ is a tuple in $\CK$.
\item $\CK$ is \textbf{full in} $\CM^\Omega$, that is, for each formula $\theta(x, \overline{y})$
of $L$ and tuple $\overline{\rg}$ in $\CK$, there exists $\rf\in\CK$ such that
$$ \llbracket \theta(\rf,\overline{\rg})\rrbracket=\llbracket(\exists x\,\theta)(\overline{\rg})\rrbracket.$$
\end{itemize}

It is shown in \cite{BenYaacov-Usvyatsov:CFO} that each continuous pre-structure induces a unique continuous
structure by identifying elements at distance zero from each other and completing the metrics.
It will be useful to consider these two steps separately here.
By a \textbf{reduced pre-structure} we will mean a pre-structure
such that $d_\BK$ and $d_\BB$ are metrics.  Then every pre-structure $(\CK,\CB)$
induces a unique reduced
pre-structure $(\overline{\CK},\overline{\CB})$ by identifying elements which are at distance zero
from each other.  The induced continuous structure is then obtained by completing the metrics,
and will be denoted by $(\widehat{\CK},\widehat{\CB})$.  We say that a pre-structure
$(\CK,\CB)$ is \textbf{pre-complete} if the reduced pre-structure $(\overline{\CK},\overline{\CB})$
is already a continuous structure, that is,
$(\widehat{\CK},\widehat{\CB})=(\overline{\CK},\overline{\CB})$.
We say that $(\CK,\CB)$ is \textbf{elementarily pre-embeddable} in $(\CK',\CB')$
if $(\widehat{\CK},\widehat{\CB})$ is elementarily embeddable in $(\widehat{\CK}',\widehat{\CB}')$.

Note that for any pre-structure $(\CK,\CB)$, $(\widehat{\CK},\widehat{\CB})$ is
elementarily equivalent to $(\CK,\CB)$, and $(\widehat{\CK},\widehat{\CB})$ is
separable and only if $(\CK,\CB)$ is separable.

In \cite{Keisler:Randomizing}, a randomization of $\CM$ was defined as a three-sorted first order
structure instead of a continuous two-sorted structure, with the value space $[0,1]$
replaced by any first order structure $\CR$ whose theory is an expansion of the theory
of real closed ordered fields which admits quantifier elimination.
When $\CR$ is the ordered field of reals, such a structure can be interpreted as
a randomization in the present sense.

The paper \cite{Keisler:Randomizing} gives axioms for a theory $T^R$, called the randomization theory
of $T$, in three-sorted first order logic.
We now translate these axioms into a theory in
the (two-sorted) continuous logic  $L^R$ in the sense of  \cite{BenYaacov-Usvyatsov:CFO},
with a connective for each continuous function $[0,1]^n\mapsto [0,1]$
which is definable in $\CR$.  We use $\varphi$ for arbitrary formulas of $L$
and $\Phi$ for arbitrary formulas of the continuous logic $L^R$.  Following \cite{BenYaacov-Usvyatsov:CFO},
we use the notation $\forall x(\Phi(x)\leq r)$ for $(\sup_x\Phi(x))\leq r$, and
$\exists x(\Phi(x)\leq r)$ for $(\inf_x\Phi(x))\leq r$.  Thus the existential
quantifiers are understood in the approximate sense.  We also use the notation
$\sU\doteq \sV$ for the statement $d_\BB(\sU,\sV)=0$.

The axioms for the randomization theory $T^R$ of a first order theory $T$ are as follows:
\bigskip

\textit{Validity Axioms}
$$ \forall\overline{x} (\llbracket\psi(\overline{x})\rrbracket\doteq\top)$$
where $\forall\overline{x}\,\psi(\overline{x})$ is logically valid in first order logic.

\textit{Boolean Axioms}  The usual Boolean algebra axioms in sort $\BB$, and the statements
$$ \forall\overline{x}(\llbracket(\neg\varphi)(\overline{x})\rrbracket
\doteq\neg\llbracket\varphi(\overline{x})\rrbracket)$$
$$\forall\overline{x}(\llbracket(\varphi\vee\psi)(\overline{x})\rrbracket
\doteq\llbracket\varphi(\overline{x})\rrbracket\sqcup\llbracket\psi(\overline{x})\rrbracket)$$
$$\forall\overline{x}(\llbracket(\varphi\wedge\psi)(\overline{x})\rrbracket
\doteq\llbracket\varphi(\overline{x})\rrbracket\sqcap\llbracket\psi(\overline{x})\rrbracket)$$

\textit{Distance Axioms}
$$\forall x\forall y\,d_\BK(x,y)= 1- \mu \llbracket x=y\rrbracket,\qquad
\forall \sU\forall \sV\,d_\BB(\sU,\sV)= \mu (\sU\triangle \sV)$$

\textit{Fullness Axioms (or Maximal Principle)}

$$\forall\overline{y}\exists x(\llbracket\varphi(x,\overline{y})\rrbracket
\doteq\llbracket(\exists x\varphi)(\overline{y})\rrbracket)$$
As mentioned previously, the quantifiers should be understood in the approximate sense,
so this axiom can also be written in the equivalent form
$$\sup_{\overline{y}}\inf_x (d_\BB(\llbracket\varphi(x,\overline{y})\rrbracket,
\llbracket(\exists x\varphi)(\overline{y})\rrbracket)) = 0$$

\textit{Event Axiom}
$$ \forall \sU\exists x\exists y(\sU\doteq\llbracket x=y\rrbracket)$$

\textit{Measure Axioms}

$$ \mu [\top]=1\wedge \mu [\bot]=0$$  
$$\forall \sU\forall \sV(\mu [\sU]+\mu [\sV]=\mu [\sU\sqcup \sV]+\mu [\sU\sqcap \sV])$$

\textit{Atomless Axiom}
$$\forall \sU \exists \sV(\mu [\sU\sqcap \sV]=\mu [\sU]/2)$$
\medskip

\textit{Transfer Axioms}

$$ \llbracket\varphi\rrbracket \doteq\top$$
where $\varphi\in T$.
\bigskip


\begin{thm} \label{complete}
 (\cite{Keisler:Randomizing}, Theorem 3.10, restated).  If $T$ is complete, then  $T^R$ is complete.
\end{thm}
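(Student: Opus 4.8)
The plan is to reduce completeness of $T^R$ to the observation that, in any model of $T^R$, the value of every \emph{closed} $L^R$-term — and hence of every $L^R$-sentence — is pinned down by the axioms together with the completeness of $T$. The decisive input is quantifier elimination for $T^R$, which is part of the restatement of Keisler's results: modulo $T^R$, every formula $\Phi(\overline x)$ is (uniformly) approximable by quantifier-free formulas in the same variables. Applied to a sentence $\Phi$, this says that the value $\Phi^{(\CK,\CB)}$ is determined, to within any $\varepsilon>0$, by the values of quantifier-free sentences; and since each connective is a fixed continuous function $[0,1]^n\to[0,1]$ evaluated identically in all models, a quantifier-free sentence is a fixed continuous combination of \emph{atomic} sentences. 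So it suffices to show that $T^R$ assigns one and the same value to each atomic sentence across all of its models.

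First I would analyse the closed terms of $L^R$. Since the signature has no constants and no function symbols of sort $\BK$, there are \emph{no} closed terms of sort $\BK$; consequently $d_\BK$ contributes no atomic sentences, and the only way to form a closed term $\llbracket\varphi(\overline x)\rrbracket$ is to take $\varphi$ to be an $L$-\emph{sentence}. Hence every closed term of sort $\BB$ is a Boolean combination of $\top$, $\bot$, and terms $\llbracket\varphi\rrbracket$ with $\varphi$ an $L$-sentence. Using the Boolean Axioms (which push $\neg,\sqcup,\sqcap$ inside the brackets) together with the Validity Axioms, I would show that any such closed $\BB$-term $\tau$ satisfies $\tau\doteq\llbracket\psi\rrbracket$ for a single $L$-sentence $\psi$ read off syntactically from $\tau$. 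The atomic sentences are therefore, up to $\doteq$, exactly $\mu\llbracket\psi\rrbracket$ and $d_\BB(\llbracket\psi_1\rrbracket,\llbracket\psi_2\rrbracket)=\mu\llbracket\psi_1\triangle\psi_2\rrbracket$ for $L$-sentences $\psi,\psi_1,\psi_2$.

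It then remains to evaluate $\mu\llbracket\psi\rrbracket$ for an $L$-sentence $\psi$ in an arbitrary model of $T^R$. Here completeness of $T$ enters: exactly one of $T\vdash\psi$ and $T\vdash\neg\psi$ holds. In the first case the Transfer Axioms give $\llbracket\psi\rrbracket\doteq\top$, so by the Measure Axioms $\mu\llbracket\psi\rrbracket=\mu[\top]=1$; in the second case $\llbracket\neg\psi\rrbracket\doteq\top$, and since the Measure Axioms with $\sV=\neg\sU$ yield $\mu[\sU]+\mu[\neg\sU]=\mu[\top]+\mu[\bot]=1$, we get $\mu\llbracket\psi\rrbracket=0$. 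Either way the value lies in $\{0,1\}$ and is the same in every model. This determines every atomic sentence, hence every quantifier-free sentence, hence by quantifier elimination every sentence; so any two models of $T^R$ are elementarily equivalent and $T^R$ is complete.

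The main obstacle is the appeal to quantifier elimination: everything downstream is bookkeeping, but the reduction to quantifier-free — and ultimately atomic — sentences is exactly what makes the ``value in $\{0,1\}$'' computation decisive. Granting QE as supplied by the restated results of Keisler, the only point needing care is the uniform/approximate reading of the reduction, which is harmless since the finitely many atomic sentences occurring in any given sentence each take a single value across all models.
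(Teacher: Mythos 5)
Your proof is correct, and it is essentially the expected one: the paper itself gives no internal proof of this statement (it simply cites Theorem~3.10 of Keisler's \emph{Randomizing a model}), and the argument there is precisely the reduction you carry out --- quantifier elimination plus transfer pins down the value of every sentence. Two points are worth making explicit. First, there is no circularity in invoking Theorem~\ref{qe}: the paper states strong quantifier elimination for $T^R$ for an \emph{arbitrary} first order theory $T$, with no completeness hypothesis, so it is legitimately available as input; with strong (exact, not merely approximate) elimination your $\varepsilon$-bookkeeping in the last paragraph is not even needed. Second, your step ``$T\vdash\psi$ implies $\mu\llbracket\psi\rrbracket=1$'' needs one more beat than the Transfer Axioms alone give, since those axioms only assert $\llbracket\varphi\rrbracket\doteq\top$ for $\varphi\in T$ literally: one takes finitely many $\varphi_1,\dots,\varphi_k\in T$ with $\vdash(\varphi_1\wedge\dots\wedge\varphi_k)\rightarrow\psi$ and combines the Validity and Boolean Axioms (using that $\mu$ is $1$-Lipschitz, hence $\doteq$-invariant) to conclude $\llbracket\psi\rrbracket\doteq\top$; this is routine but should be said. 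Finally, completeness requires consistency of $T^R$ as well as pairwise elementary equivalence of its models; that is supplied by the paper's observation that every randomization of a model of $T$ (e.g.\ the simple random elements over an atomless algebra) is a pre-model of $T^R$. With these small patches your argument is a faithful, self-contained reconstruction of the cited proof.
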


\begin{prp}  (\cite{Keisler:Randomizing}, Proposition 4.3, restated)
Let $T$ be the complete theory of $\CM$.
Every randomization $(\CK,\CB)$ of $\CM$ is a pre-model of the randomization theory
$T^R$.
\end{prp}

Let $(\CK,\CB)$ and $(\CK',\CB')$ be pre-models of $T^R$.
We say that $(\CK,\CB)$ \textbf{represents}
$(\CK',\CB')$ if their corresponding reduced pre-structures are isomorphic.


\begin{thm} \label{firstrepresentation}
(First Representation Theorem)
Let  $T$ be the complete theory of $\CM$.
Every pre-model of $T^R$  is represented by some
randomization of $\CM$.
\end{thm}

\begin{proof}  Let $(\CK,\CB)$ be a pre-model of $T^R$.
It follows from \cite{Keisler:Randomizing}, Corollary 6.6 and Theorem 5.7, that the reduced structure
$(\overline{\CK},\overline{\CB})$ of $(\CK,\CB)$ is isomorphically embeddable in the reduced pre-structure
of a randomization of $\CM$.  Since $(\CK,\CB)$ satisfies
the Fullness and Event Axioms, $(\overline{\CK},\overline{\CB})$ is isomorphic to
the reduced pre-structure of a randomization of $\CM$.
\end{proof}

\begin{dfn}  A pre-model $(\CK,\CB)$ of $T^R$ has \textbf{perfect witnesses} if
the existential quantifiers in the Fullness Axioms and the Event Axiom
have witnesses in which the axioms hold exactly rather than merely approximately.
That is,

\textit{Fullness}: For each $\overline{\rg}$ in $\CK^n$ there exists $\rf\in\CK$ such that
$$ \llbracket\varphi(\rf,\overline{\rg})\rrbracket \doteq
\llbracket(\exists x\varphi)(\overline{\rg})\rrbracket.$$

\textit{Event}:  For each $\sB\in\CB$ there exist $\rf,\rg\in\CK$ such that
$\sB\doteq\llbracket \rf=\rg\rrbracket$.
\end{dfn}

The first order models considered in \cite{Keisler:Randomizing} are pre-models
with perfect witnesses when viewed as metric structures.

\begin{prp}  \label{p-perfect}  (\cite{Keisler:Randomizing}, Proposition 4.3, restated)
Let $T$ be the complete theory of $\CM$.
Every full randomization $(\CK,\CB)$ of $\CM$ has perfect witnesses.
\end{prp}

\begin{thm} \label{representation}
(Second Representation Theorem) (\cite{Keisler:Randomizing}, Theorem 4.5, restated)
Let  $T$ be the complete theory of $\CM$.
Every pre-model of $T^R$ with perfect witnesses is represented by some
full randomization of $\CM$.
\end{thm}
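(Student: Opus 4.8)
The plan is to begin with the randomization supplied by the First Representation Theorem and then upgrade it to a full one, using the perfect witnesses to convert the almost-everywhere equalities guaranteed by the Fullness and Event Axioms into genuine set-theoretic equalities by surgery on null sets.

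First I would invoke the First Representation Theorem (Theorem~\ref{firstrepresentation}) to obtain a randomization $(\CK_0,\CB_0)$ of $\CM$, living over an atomless finitely additive probability space $(\Omega,\CB_0,\mu)$, whose reduced pre-structure is isomorphic to that of $(\CK,\CB)$. Since having perfect witnesses is a property of the reduced pre-structure (it is phrased entirely in terms of $\doteq$), it transports along this isomorphism, so I may assume that in $(\overline{\CK_0},\overline{\CB_0})$ the Fullness and Event Axioms hold in the exact sense of $\doteq$; that is, the relevant pairs of events agree up to a $\mu$-null set.

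Next I would enlarge the structure. Let $\CK_1$ be the set of all functions $\rf\colon\Omega\to\CM$ that agree with some element of $\CK_0$ outside a $\mu$-null set, and let $\CB_1=\{\llbracket\psi(\overline{\rf})\rrbracket : \psi\in L,\ \overline{\rf}\ \text{a tuple from}\ \CK_1\}$. Because every member of $\CK_1$ (resp.\ of $\CB_1$) differs from a member of $\CK_0$ (resp.\ of $\CB_0$) only within a null set, the reduction map induces an isomorphism $(\overline{\CK_1},\overline{\CB_1})\cong(\overline{\CK_0},\overline{\CB_0})$, so $(\CK_1,\CB_1)$ still represents $(\CK,\CB)$. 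The first fullness clause $\CB_1=\{\llbracket\psi(\overline{\rf})\rrbracket\}$ holds by construction, and $\CB_1$ is closed under the Boolean operations by combining formulas in disjoint blocks of variables. For fullness of $\CK_1$ in $\CM^\Omega$, given $\theta(x,\overline{y})$ and $\overline{\rg}$ in $\CK_1$ I would take the transported witness $\rf_0\in\CK_0$ with $\llbracket\theta(\rf_0,\overline{\rg})\rrbracket\doteq\llbracket(\exists x\,\theta)(\overline{\rg})\rrbracket$; writing $E$ for the symmetric difference of these two events, one checks $E$ is null and $E\subseteq\llbracket(\exists x\,\theta)(\overline{\rg})\rrbracket$, so redefining $\rf_0$ on $E$ to take at each $w\in E$ a value of $\CM$ satisfying $\theta(\cdot,\overline{\rg}(w))$ yields $\rf\in\CK_1$ with $\llbracket\theta(\rf,\overline{\rg})\rrbracket=\llbracket(\exists x\,\theta)(\overline{\rg})\rrbracket$ exactly.

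The hard part will be verifying that $(\Omega,\CB_1,\mu)$ is again a legitimate atomless finitely additive probability space, since in the finitely additive setting null sets need not be measurable and the surgery above produces events that are only almost equal to members of $\CB_0$. The key point is that for every tuple $\overline{\rf}$ from $\CK_1$ the event $\llbracket\psi(\overline{\rf})\rrbracket$ differs from the event obtained by replacing each entry of $\overline{\rf}$ by an almost-everywhere equal element of $\CK_0$ only within a measurable null set; hence $\mu$ extends unambiguously to $\CB_1$ and remains finitely additive and atomless. The remaining randomization conditions, together with the approximate Fullness and Event clauses, then follow because they are determined by the reduced pre-structure, which has not changed. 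This exhibits $(\CK_1,\CB_1)$ as a full randomization of $\CM$ representing $(\CK,\CB)$.
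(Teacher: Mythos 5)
Your proposal is correct in substance, but note that it takes a genuinely different route from the paper, which offers no internal proof of this theorem at all: the statement is simply cited as Theorem~4.5 of Keisler's \emph{Randomizing a model}, where the full randomization is built directly from the pre-model with perfect witnesses. You instead reduce to the First Representation Theorem and then perform surgery on null sets, which makes the development more self-contained and isolates exactly where perfect witnesses are used (to convert $\doteq$ into literal equality by modifying functions on measurable null events). Two points in your write-up need tightening, though both are repairable with ingredients you already transported in your second paragraph. First, your justification of $(\overline{\CK_1},\overline{\CB_1})\cong(\overline{\CK_0},\overline{\CB_0})$ --- that every member of $\CB_1$ is almost equal to a member of $\CB_0$ --- only yields an isometric embedding of $\overline{\CB_1}$ into $\overline{\CB_0}$; surjectivity requires the transported \emph{exact} Event axiom, giving that every $\sB\in\CB_0$ is almost equal to $\llbracket \rf=\rg\rrbracket$ for some $\rf,\rg\in\CK_0\subseteq\CK_1$, hence to a member of $\CB_1$. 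Without this, $\CB_1$ could be a strictly smaller event algebra and $(\CK_1,\CB_1)$ would fail to represent $(\CK,\CB)$; the approximate density clause of the randomization definition would not suffice. Second, atomlessness of the extended measure on $\CB_1$ also uses the Event axiom, not merely the almost-everywhere identification: given $\sB\in\CB_1$ of positive measure, pass to an almost equal $\sB'\in\CB_0$, split it inside the atomless algebra $\CB_0$, replace the resulting piece $\sC'$ by an almost equal definable event $\sC''\in\CB_1$, and take $\sC''\cap\sB$, which lies in $\CB_1$, is a literal subset of $\sB$, and has the intermediate measure. With these clarifications your argument goes through; what Keisler's direct construction buys is independence from the First Representation Theorem, while yours buys brevity within the present paper's framework.
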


We now show that every model of $T^R$ has perfect witnesses.

\begin{thm} \label{t-perfect}
 For any first order theory $T$, every pre-complete model
of $T^R$ has perfect witnesses.  In particular, every model of $T^R$
has perfect witnesses.
\end{thm}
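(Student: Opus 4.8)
The plan is to show directly that the approximate witnesses guaranteed by the Fullness and Event Axioms can be upgraded to exact ones inside a pre-complete model, by assembling a suitable Cauchy sequence of approximate witnesses and passing to its limit. The ``in particular'' clause is then immediate: a model of $T^R$ in the sense of continuous logic is a reduced and complete metric structure, hence coincides with its own completion and is therefore pre-complete, so it falls under the main assertion. Throughout I would work in the reduced pre-structure $(\overline{\CK},\overline{\CB})$, where $d_\BK$ and $d_\BB$ are genuine metrics and, by pre-completeness, complete; a witness found there lifts to any representative in $\CK$, since the condition $\doteq$ depends only on the distance-zero class.

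The key tool, and the step I expect to be the main obstacle, is an \emph{approximate gluing lemma}: given $\rf_1,\rf_2\in\CK$, an event $\sA\in\CB$, and $\eta>0$, there is $\rh\in\CK$ with $\mu(\sA\sqcap\llbracket\rh\neq\rf_1\rrbracket)<\eta$ and $\mu(\neg\sA\sqcap\llbracket\rh\neq\rf_2\rrbracket)<\eta$; informally, $\rh$ agrees with $\rf_1$ on $\sA$ and with $\rf_2$ off $\sA$. The difficulty is that, unlike in a full randomization where one simply splices two functions on $\Omega$, here the combined element must be produced from the axioms alone. I would obtain it by reducing gluing to an instance of the Fullness Axiom: using the Event Axiom pick $\rr_1,\rr_2$ with $\llbracket\rr_1=\rr_2\rrbracket$ within $\eta/2$ of $\sA$, and apply the (approximate) Fullness Axiom to the formula $\theta(x,y_1,y_2,z_1,z_2)$ given by $(z_1=z_2\wedge x=y_1)\vee(z_1\neq z_2\wedge x=y_2)$, whose existential $\exists x\,\theta$ is logically valid. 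The resulting witness $\rh$ makes $\llbracket\theta(\rh,\rf_1,\rf_2,\rr_1,\rr_2)\rrbracket$ within $\eta/2$ of $\top$; unwinding the Boolean Axioms, $\llbracket\theta\rrbracket=(\llbracket\rr_1=\rr_2\rrbracket\sqcap\llbracket\rh=\rf_1\rrbracket)\sqcup(\neg\llbracket\rr_1=\rr_2\rrbracket\sqcap\llbracket\rh=\rf_2\rrbracket)$, and the two measure estimates follow after replacing $\llbracket\rr_1=\rr_2\rrbracket$ by $\sA$ at a further cost of $\eta/2$.

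For the Fullness part of perfect witnesses, fix $\overline{\rg}$ and set $\sB_0=\llbracket(\exists x\varphi)(\overline{\rg})\rrbracket$. Since $\varphi(x,\overline{y})\to(\exists x\varphi)(\overline{y})$ is valid, the Validity and Boolean Axioms give $\llbracket\varphi(\rf,\overline{\rg})\rrbracket\sqsubseteq\sB_0$ for every $\rf$, so $d_\BB(\llbracket\varphi(\rf,\overline{\rg})\rrbracket,\sB_0)=\mu(\sB_0)-\mu\llbracket\varphi(\rf,\overline{\rg})\rrbracket$, and the task is to drive this deficit to zero with a single element. I would build $\rf_n$ with $\sA_n:=\llbracket\varphi(\rf_n,\overline{\rg})\rrbracket$ and $\mu(\sB_0\setminus\sA_n)<2^{-n}$ recursively: given $\rf_n$, take a fresh approximate witness $\rf'$ with deficit below $2^{-(n+3)}$ and glue $\rf_{n+1}$ to agree with $\rf_n$ on $\sA_n\sqcup\neg\sB_0$ and with $\rf'$ on $\sB_0\setminus\sA_n$, with gluing error below $2^{-(n+3)}$. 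Because $\rf_{n+1}$ differs from $\rf_n$ only on $\sB_0\setminus\sA_n$ up to the gluing error, $d_\BK(\rf_n,\rf_{n+1})<2^{-n}+2^{-(n+3)}$, so $(\rf_n)$ is Cauchy; and the deficit bound $\mu(\sB_0\setminus\sA_{n+1})<2^{-(n+1)}$ follows by splitting over $\sA_n$ and $\sB_0\setminus\sA_n$. Pre-completeness yields a limit $\rf$, and since $\llbracket\varphi(\cdot)\rrbracket$ is $1$-Lipschitz we get $\llbracket\varphi(\rf,\overline{\rg})\rrbracket=\lim_n\sA_n=\sB_0$ in $d_\BB$, i.e.\ a perfect Fullness witness.

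The Event part runs along the same lines but is two-sided: given $\sB$, I would recursively produce pairs $(\rf_n,\rg_n)$ with $\mu(\sB\triangle E_n)<2^{-n}$, where $E_n=\llbracket\rf_n=\rg_n\rrbracket$, starting from the approximate Event Axiom. At each stage I would correct the pair on the bad set $\sB\triangle E_n$ (of measure $<2^{-n}$) using a better approximate pair $(\rf',\rg')$, gluing $\rf_n$ with $\rf'$ and $\rg_n$ with $\rg'$ along the \emph{same} good region $G=(\sB\sqcap E_n)\sqcup(\neg\sB\sqcap\neg E_n)$; this keeps both sequences Cauchy and shrinks $\mu(\sB\triangle E_{n+1})$ below $2^{-(n+1)}$. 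Passing to the limits $\rf,\rg$ and using that $\llbracket\cdot=\cdot\rrbracket$ is $1$-Lipschitz in each argument gives $\llbracket\rf=\rg\rrbracket\doteq\sB$. The only genuinely delicate point in the whole argument is the gluing lemma; once it is in place, the two completions are routine bookkeeping with the measure and the $1$-Lipschitz continuity of the symbols.
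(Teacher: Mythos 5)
Your proposal is correct, and its skeleton is the same as the paper's: upgrade the approximate Fullness and Event witnesses by recursively gluing better approximations onto the current one, observe the resulting sequence is $d_\BK$-Cauchy, pass to the limit by pre-completeness, and conclude by $1$-Lipschitz continuity of $\llbracket\varphi(\cdot)\rrbracket$ and $\llbracket\cdot=\cdot\rrbracket$. The differences are in how the gluing is organized. You correctly identify gluing as the crux and prove an explicit approximate gluing lemma along an \emph{arbitrary} event $\sA\in\CB$, by approximating $\sA$ via the Event Axiom by some $\llbracket\rr_1=\rr_2\rrbracket$ and applying approximate Fullness to the valid disjunction $\theta(x,y_1,y_2,z_1,z_2)$; the paper leaves this step implicit (``Using the Fullness Axioms, we can get a sequence $\rh_n$\dots''), and since its gluing regions in the Fullness part are definable events $\llbracket\varphi(\rh_n,\overline{\rg})\rrbracket$, it can glue directly through Fullness without invoking the Event Axiom. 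The Event part diverges more substantially: the paper first fixes $\rf',\rg'$ with $\llbracket\rf'=\rg'\rrbracket\doteq\bot$ (using the standing assumption that models of $T$ have at least two elements, together with the exact Fullness witnesses already established) and builds a single sequence $\rh_n$ so that in the limit $\sB\doteq\llbracket\rf'=\rh\rrbracket$; you instead correct the pair $(\rf_n,\rg_n)$ two-sidedly along the common good region $G=(\sB\sqcap E_n)\sqcup(\neg\sB\sqcap\neg E_n)$. Your variant never needs the two-element assumption in this step and never needs exact gluing, only approximate, which makes it marginally more robust and self-contained; the paper's variant is shorter and yields the event in the concrete form $\llbracket\rf'=\rh\rrbracket$ with a fixed first coordinate. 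Your error bookkeeping (deficit $\mu(\sB_0\setminus\sA_n)<2^{-n}$, gluing tolerance $2^{-(n+3)}$, Cauchy bound $2^{-n}+2^{-(n+3)}$) checks out, as does the observation that monotonicity $\sA_n\sqsubseteq\sB_0$ reduces the $d_\BB$-distance to the measure deficit; and your justification of the ``in particular'' clause (models are complete, hence pre-complete) matches the paper's intent.
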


\begin{proof}  Let $(\CK,\CB)$ be a pre-complete model of $T^R$.
We first show that $(\CK,\CB)$ has perfect witnesses for the Fullness Axioms.
Consider a first order formula $\varphi(x,\overline{y})$, and let
$\overline{\rg}$ be a tuple in $\CK$.  Then for each $n\in\omega$ there exists
$\rf_n\in\CK$ such that
$$d_\BB(\llbracket\varphi(\rf_n,\overline{\rg})\rrbracket,
\llbracket(\exists x\varphi)(\overline{\rg})\rrbracket) < 2^{-n}.$$
Using the Fullness Axioms, we can get a sequence $\rh_n$ in $\CK$
such that $\rh_1=\rf_1$ and with probability at least $1-2^{-2n}$,
$\rh_{n+1}$ agrees with $\rh_n$ when $\varphi(\rh_n,\overline{\rg})$
holds, and agrees with $\rf_{n+1}$ otherwise.  Then
$$d_\BB(\llbracket\varphi(\rh_n,\overline{\rg})\rrbracket,
\llbracket(\exists x\varphi)(\overline{\rg})\rrbracket) < 2^{-n}+2^{-2n}$$
and
$$d_\BK(\rh_n,\rh_{n+1})<2^{-n}+2^{-2n}.$$
Then the sequence $\rh_n$ is Cauchy convergent with respect to $d_\BK$.
By pre-completeness, $\rh_n$ converges to an element $\rh\in\CK$ with respect
to $d_\BK$. It follows that
$$d_\BB(\llbracket\varphi(\rh,\overline{\rg})\rrbracket,
\llbracket(\exists x\varphi)(\overline{\rg})\rrbracket) = 0,$$
as required.

It remains to show that $(\CK,\CB)$ has perfect witnesses for the Event Axiom.
Let $\sB\in\CB$.  By the Event Axiom, for each $n\in\omega$ there exists
$\rf_n, \rg_n\in\CK$ such that $\mu[\sB\triangle\llbracket \rf_n=\rg_n\rrbracket]<2^{-n}$.
Since every model of $T$ has at least two elements, there must exist
$\rf',\rg'\in\CK$ such that $\llbracket \rf'=\rg'\rrbracket\doteq\bot$.  Taking perfect witnesses for
 the Fullness Axioms, we can obtain elements
$\rh_n\in\CK$ such that $\rh_n$ agrees with $\rf'$ on
$\llbracket \rf_n=\rg_n\rrbracket$, and $\rh_n$ agrees with $\rg'$ on
$\llbracket \rf_n\neq \rg_n\rrbracket$.  Then the sequence $\rh_n$ is Cauchy convergent
with respect to $d_\BK$, and hence converges to some $\rh$ in $\CK$.
It follows that $\sB\doteq \llbracket \rf'=\rh\rrbracket$.
\end{proof}

\begin{cor}  \label{c-represent}
Let  $T$ be the complete theory of $\CM$.
Every pre-complete model of $T^R$, and hence every model of $T^R$,
 is represented by some full randomization of $\CM$.
\end{cor}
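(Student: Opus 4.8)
The plan is to derive this as an immediate consequence of the two preceding results: \fref{thm:t-perfect}, which supplies perfect witnesses, and \fref{thm:representation}, the Second Representation Theorem, which converts perfect witnesses into a full randomization. So the argument should be almost purely a matter of chaining these together, with no new construction required.

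First I would take an arbitrary pre-complete model $(\CK,\CB)$ of $T^R$. By \fref{thm:t-perfect}, precisely because $(\CK,\CB)$ is pre-complete, it has perfect witnesses for both the Fullness Axioms and the Event Axiom. Since a model of $T^R$ is in particular a pre-model of $T^R$, the hypotheses of the Second Representation Theorem are met, and \fref{thm:representation} applies directly: $(\CK,\CB)$, being a pre-model of $T^R$ with perfect witnesses, is represented by some full randomization of $\CM$. That settles the first claim.

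For the clause ``and hence every model of $T^R$'', I would observe that a genuine continuous model of $T^R$ is a complete continuous structure, so $d_\BK$ and $d_\BB$ are already metrics on it (not merely pseudo-metrics) and are already complete; equivalently, its reduced pre-structure coincides with its completion, $(\overline{\CK},\overline{\CB})=(\widehat{\CK},\widehat{\CB})$. This is exactly the condition that it be pre-complete, as recalled just before the axioms. Hence the first part of the corollary applies to it, and it too is represented by a full randomization. I do not anticipate a serious obstacle, since all the delicate work has already been discharged: the Cauchy-sequence construction producing exact witnesses lives in the proof of \fref{thm:t-perfect}, and the passage from perfect witnesses to a full randomization is imported via \fref{thm:representation}. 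The only point requiring a moment's care is the routine verification that a continuous model is automatically pre-complete.
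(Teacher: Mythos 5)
Your proof is correct and follows exactly the route the paper intends: the corollary is stated without proof precisely because it is the immediate concatenation of Theorem \ref{t-perfect} (pre-completeness yields perfect witnesses) with the Second Representation Theorem \ref{representation}, plus the observation that a genuine model of $T^R$, being a complete metric structure, is pre-complete. Nothing to add.
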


\begin{thm}  \label{qe} (\cite{Keisler:Randomizing} Theorems 3.6 and 5.1, restated).
For any first order theory $T$,
the randomization theory $T^R$ for $T$ admits
\emph{strong} quantifier elimination.
\end{thm}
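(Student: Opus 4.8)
The plan is to eliminate quantifiers from the inside out. The word \emph{strong} refers to the fact that, thanks to the perfect witnesses provided by Theorem~\ref{t-perfect}, each elimination will be \emph{exact}: the relevant $\inf$ and $\sup$ are attained, so a formula is equivalent to a quantifier-free one in the literal sense and not merely up to approximation. First I would normalise. Since the only symbols producing sort $\BB$ are the $\llbracket\varphi(\cdot)\rrbracket$ and the Boolean operations, the Boolean Axioms reduce every term of sort $\BB$ to a single atomic term $\llbracket\chi(\overline x)\rrbracket$ with $\overline x$ of sort $\BK$; the Distance Axioms then rewrite $d_\BK(x,y)$ as $1-\mu\llbracket x=y\rrbracket$ and each $d_\BB$ as a $\mu\llbracket\cdot\rrbracket$ term. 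Thus every quantifier-free formula is $T^R$-equivalent to a connective $u$ applied to finitely many expressions $\mu\llbracket\chi_i(\overline x)\rrbracket$, and it suffices to eliminate one outermost quantifier from such a normalised formula and recover the same shape. A quantifier over the event sort is disposed of first: by the Event Axiom with perfect witnesses, every event equals $\llbracket\rf=\rg\rrbracket$, so $\sup_{\sU}\Phi(\sU)=\sup_{x,y}\Phi(\llbracket x=y\rrbracket)$ and likewise for $\inf$, leaving only quantifiers over $\BK$.

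The heart of the matter, and the step I expect to be the main obstacle, is eliminating an $\inf_x$ over sort $\BK$ from $u(\mu\llbracket\chi_1(x,\overline y)\rrbracket,\dots,\mu\llbracket\chi_k(x,\overline y)\rrbracket)$. Let $\rho_1,\dots,\rho_m$ be the atoms of the Boolean algebra generated by $\chi_1,\dots,\chi_k$ in the variables $x,\overline y$, so that each $\mu\llbracket\chi_i\rrbracket$ is the sum of the $\mu\llbracket\rho_j\rrbracket$ with $\rho_j\vdash\chi_i$, and the formula depends on $x$ only through the vector $(\mu\llbracket\rho_j(x,\overline y)\rrbracket)_j$. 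By Corollary~\ref{c-represent} I may compute the range of this vector inside a full randomization of $\CM$ over an atomless space $\Omega$. Fixing $\overline\rg$ and putting $E_j=\llbracket(\exists x\,\rho_j)(\overline\rg)\rrbracket$, choosing $\rf$ is the same as choosing a measurable partition $\{A_j\}$ of $\Omega$ with $A_j=\llbracket\rho_j(\rf,\overline\rg)\rrbracket\subseteq E_j$: fullness lets me set $\rf(w)$ pointwise to realise any atom achievable at $w$, and conversely the realised atoms form such a partition (note $\bigcup_j E_j=\Omega$). A measure-theoretic marriage argument, using atomlessness, then shows that the achievable vectors $(\mu(A_j))_j$ are \emph{exactly} the polytope
$$P(\overline\rg)=\Big\{(a_j)_j:\ a_j\ge 0,\ \textstyle\sum_j a_j=1,\ \sum_{j\in S}a_j\le\mu\big(\textstyle\bigcup_{j\in S}E_j\big)\ \text{for all }S\Big\}.$$

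It then follows that $\inf_x u(\dots)=\min_{(a_j)\in P(\overline\rg)}\tilde u\big((a_j)_j\big)$, where $\tilde u$ is $u$ precomposed with the linear maps $(a_j)\mapsto\sum_{\rho_j\vdash\chi_i}a_j$; the minimum is attained because $P(\overline\rg)$ is compact, which is precisely what makes the elimination exact. The defining data of $P(\overline\rg)$ are the quantifier-free terms $\mu\big(\bigcup_{j\in S}E_j\big)=\mu\llbracket\big(\bigvee_{j\in S}\exists x\,\rho_j\big)(\overline\rg)\rrbracket$, and $(p_S)\mapsto\min_{(a_j)\in P}\tilde u((a_j)_j)$ is a continuous function definable in $\CR$, since its graph is given by a first order formula over the ordered field that $\CR$ eliminates. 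Hence $\inf_x u(\dots)$ is again a connective applied to atomic terms $\mu\llbracket\cdot(\overline y)\rrbracket$, the induction continues, and the symmetric computation with $\max$ handles $\sup_x$. The delicate points---exactly where atomlessness, perfect witnesses, and quantifier elimination for $\CR$ enter---are the Hall-type characterisation of $P(\overline\rg)$ and the verification that the extremal value remains an admissible connective.
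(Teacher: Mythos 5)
The paper does not reprove this theorem --- it imports it from \cite{Keisler:Randomizing} (Theorems 3.6 and 5.1) --- so your proposal must stand on its own, and its overall shape (reduce the matrix to atoms $\rho_j$, characterize the achievable vectors $(\mu\llbracket\rho_j(\rf,\overline\rg)\rrbracket)_j$ by a Hall-type polytope, then absorb the extremal value into an $\CR$-definable connective) is indeed essentially the right, and essentially Keisler's, strategy. But there are concrete gaps. First, your normal form is wrong in the presence of free event-sort variables: strong quantifier elimination requires an equivalent formula \emph{with the same free variables}, so you must treat formulas in which some $\sU$ of sort $\BB$ occurs free. Such a formula contains terms like $\mu(\sU\sqcap\llbracket\chi(\overline x)\rrbracket)$ that do not reduce to any $\mu\llbracket\chi\rrbracket$, and your Event-Axiom substitution $\sup_\sU\Phi(\sU)=\sup_{x,y}\Phi(\llbracket x=y\rrbracket)$ only removes \emph{bound} event variables. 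The repair is to relativize your polytope to the atoms $\sV_t$ of the finite algebra generated by the free event variables, with constraints $\sum_{j\in S}a_{j,t}\leq\mu\bigl(\llbracket(\exists x\bigvee_{j\in S}\rho_j)(\overline y)\rrbracket\sqcap\sV_t\bigr)$ and $\sum_j a_{j,t}=\mu(\sV_t)$; the same argument goes through, but it has to be said. Second, you invoke Corollary \ref{c-represent}, which assumes $T$ complete ($T=Th(\CM)$), whereas the theorem is stated for an arbitrary theory $T$; you need Theorem \ref{T-representation} instead (full randomizations of \emph{families} $\langle\CM(w)\rangle$), under which the $E_j$ and the marriage argument still make sense fiberwise.

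Two further steps need repair. Your claim that ``fullness lets me set $\rf(w)$ pointwise'' is not licensed by the axioms: $\CK$ is a subset of $\CM^\Omega$ with no assumed closure under measurable gluing, and fullness only supplies witnesses for first order formulas. Given a partition $A_j\subseteq E_j$, you should write $A_j\doteq\llbracket\rh_j=\rh'_j\rrbracket$ using perfect witnesses for the Event Axiom (Theorem \ref{t-perfect}) and apply fullness to $\theta(x,\overline y,\ldots)=\bigwedge_j(\rh_j=\rh'_j\rightarrow\rho_j(x,\overline y))$, whose $\llbracket\exists x\,\theta\rrbracket$ is $\top$ since the $A_j$ are disjoint and $A_j\subseteq E_j$ --- the same conditional-gluing trick used in the proof of Theorem \ref{t-perfect}. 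Relatedly, the measure of a randomization is only \emph{finitely} additive, so exact Hall splitting (and hence attainment of your minimum) is not available in an arbitrary pre-model; what you actually need, and what suffices for strong elimination, is only the identity of \emph{values} $\inf_x u(\ldots)=\min_{(a_j)\in P(\overline\rg)}\tilde u\bigl((a_j)_j\bigr)$, which follows from density of the achievable vectors in $P(\overline\rg)$ (approximate splitting via the Atomless Axiom) together with continuity of $\tilde u$; your emphasis on attainment via perfect witnesses is a red herring here. Finally, the extremal function must literally be a connective, i.e.\ a continuous function $[0,1]^N\to[0,1]$ definable in $\CR$: this requires (i) continuity of $(p_S)\mapsto\min_{P}\tilde u$ on the set of admissible constraint tuples, which holds because the coverage values $p_S=\mu\bigl(\bigcup_{j\in S}E_j\bigr)$ are monotone and submodular so the resulting base-type polytope is nonempty and varies continuously, and (ii) a definable continuous extension from that compact semialgebraic set of admissible tuples to the whole cube (a semialgebraic Tietze argument). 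Both are true but neither is automatic, and the ``strong'' clause of the theorem rests exactly on them.
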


This means that every formula $\Phi$ in the continuous language $L^R$
is $T^R$-equivalent to a formula with the same free variables
and no quantifiers of sort $\BK$ or $\BB$
(whereas ordinary quantifier elimination for continuous logic means
that every formula can be arbitrarily well approximated by quantifier-free formulas).
  Note that for each first order formula $\varphi(\overline{x})$ of
$L$, $\mu\llbracket\varphi(\overline{x})\rrbracket$ is an atomic formula of $L^R$ which has no quantifiers.
The first order quantifiers within $\varphi(\overline{x})$ do not count
as quantifiers in $L^R$.

By the Event Axiom and Theorem \ref{t-perfect}, in a model of $T^R$,
any element of sort $\BB$ is equal to a term $\llbracket \rg_1=\rg_2\rrbracket$
with parameters $\rg_1,\rg_2$ of sort $\BK$.  Therefore, in all discussions of types in
the theory $T^R$, we may confine our attention
to types of sort $\BK$ over parameters of sort $\BK$.

The space of first order $n$-types in $T$
will be denoted by $S_n(T)$.  If $T=Th(\CM)$ and $A$ is a set of parameters in $\CM$,
$S_n(T(A))$ is the space of first order $n$-types
in $T$ with parameters in $A$. The space of continuous $n$-types
in $T^R$ with variables of sort $\BK$ will be denoted by $S_n(T^R)$.
We will use boldface letters
$\rp,\rq,\ldots$ for types of sort $\BK$, and boldface $\rA$ for a set of parameters of sort $\BK$.
If $(\CK,\CB)$ is a pre-model of $T^R$ and $\rA$ is a set of parameters in $\CK$,
$S_n(T^R(\rA))$ is the space of continuous $n$-types in $T^R$ with variables
of sort $\BK$ and parameters from $\rA$.

Recall from \cite{BenYaacov-Usvyatsov:CFO} that  for each $\rp\in S_n(T^R)$ and formula $\Phi(\overline{x})$ of $L^R$,
we have $(\Phi(\overline{x}))^\rp\in[0,1]$.

Let $\FR(S_n(T))$ be the space of regular Borel probability measures on $S_n(T)$.
The next corollary follows from quantifier elimination and the axioms of $T^R$.

\begin{cor}  \label{c-types}
For every  $\rp\in S_n(T^R)$ there is a unique
measure $\nu_\rp\in\FR(S_n(T))$
such that for each formula $\varphi(\overline{x})$ of $L$,
$$\nu_\rp(\{q:\varphi(\overline{x})\in q\})=(\mu\llbracket\varphi(\overline{x})\rrbracket)^\rp.$$
Moreover, for each measure $\nu\in\FR(S_n(T))$
there is a unique $\rp\in S_n(T^R)$ such that $\nu=\nu_\rp$.

Similarly for types with infinitely many variables.
\end{cor}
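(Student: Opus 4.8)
The plan is to reduce everything to the level of clopen sets of the Stone space $S_n(T)$ and then let compactness do the work. First I would fix $\rp\in S_n(T^R)$ and use the strong quantifier elimination of Theorem~\ref{qe} to observe that $\rp$ is completely determined by the reals $(\mu\llbracket\varphi(\overline{x})\rrbracket)^\rp$ as $\varphi$ ranges over first order formulas, since every $L^R$-formula in variables of sort $\BK$ is $T^R$-equivalent to a continuous combination of atomic formulas of the form $\mu\llbracket\varphi\rrbracket$. Recall that the clopen subsets of $S_n(T)$ are exactly the sets $[\varphi]=\{q:\varphi(\overline{x})\in q\}$, and that $\varphi\mapsto[\varphi]$ identifies the Lindenbaum algebra of $L$-formulas in $\overline{x}$ modulo $T$ with the Boolean algebra $\mathrm{Clop}(S_n(T))$. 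The Validity, Boolean and Transfer Axioms force $\varphi\mapsto\llbracket\varphi(\overline{x})\rrbracket$ to respect $T$-equivalence and the Boolean operations, so it descends to a Boolean homomorphism from $\mathrm{Clop}(S_n(T))$ into sort $\BB$ modulo $\doteq$. Composing with $\mu^\rp$ and invoking the Measure Axioms, I would conclude that
$$\nu_\rp([\varphi]) := (\mu\llbracket\varphi(\overline{x})\rrbracket)^\rp$$
is a well-defined, finitely additive probability set function on $\mathrm{Clop}(S_n(T))$.

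Next I would upgrade finite additivity to a genuine measure. Since $S_n(T)$ is compact and totally disconnected, every clopen set is compact, so if a clopen set is partitioned into countably many clopen pieces then all but finitely many are empty; hence finite additivity already yields countable additivity on the clopen algebra. Carathéodory's extension theorem then produces a countably additive Borel probability measure, which is automatically regular, so $\nu_\rp\in\FR(S_n(T))$. Because the clopen sets form a $\pi$-system generating the Borel $\sigma$-algebra, two regular Borel probability measures agreeing on them must coincide, which gives uniqueness of $\nu_\rp$ and settles the first assertion. Note that the merely finitely additive nature of the measure on a randomization is invisible here: only finite additivity of the real-valued set function $\nu_\rp$ on $\mathrm{Clop}(S_n(T))$ is used.

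For the converse I would argue softly by topology rather than by an explicit construction. The assignment $\Psi\colon\rp\mapsto\nu_\rp$ is continuous from the compact type space $S_n(T^R)$ to $\FR(S_n(T))$ with the weak${}^{*}$ topology: for fixed $\varphi$ the value $\nu_\rp([\varphi])=(\mu\llbracket\varphi\rrbracket)^\rp$ is continuous in $\rp$, finite combinations of clopen indicators are uniformly dense in $C(S_n(T))$, and $\lvert\int f\,d\nu_\rp-\int g\,d\nu_\rp\rvert\le\lVert f-g\rVert_\infty$, so continuity passes to the uniform limit. Quantifier elimination makes $\Psi$ injective, which is exactly the uniqueness of $\rp$ claimed in the statement, and $S_n(T^R)$ being compact makes the image closed. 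I would then check that the image is convex and contains a weak${}^{*}$-dense set of Dirac masses. For the Diracs: the types realized in $\CM$ are dense in $S_n(T)$ (any $\varphi$ consistent with the complete theory $T=\mathrm{Th}(\CM)$ is witnessed in $\CM$), and for such a realized $q$ a constant random element $\overline{\rf}\equiv\overline{a}$ with $\CM\models q(\overline{a})$, living in some full randomization of $\CM$, has $\nu_{\mathrm{tp}(\overline{\rf})}=\delta_q$; closedness of the image then captures every $\delta_q$. For convexity: given $\rp,\rq$ realized by random tuples $\overline{\rf},\overline{\rg}$ in full randomizations of $\CM$ (Corollary~\ref{c-represent}), I would form the disjoint union of the two probability spaces with weights $\lambda$ and $1-\lambda$ and let $\overline{\rh}$ be $\overline{\rf}$ on one piece and $\overline{\rg}$ on the other; the result is again a randomization of $\CM$, hence a pre-model of $T^R$, and $\nu_{\mathrm{tp}(\overline{\rh})}=\lambda\nu_\rp+(1-\lambda)\nu_\rq$. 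A closed convex subset of $\FR(S_n(T))$ containing every Dirac mass is all of $\FR(S_n(T))$, so $\Psi$ is onto.

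The main obstacle is precisely this surjectivity: realizing an arbitrary regular Borel $\nu$ as the type-distribution of a random tuple. A direct construction would demand a measurable selection of realizations $q\mapsto\overline{a}_q$ together with a verification of the fullness conditions, essentially re-proving the representation theorems and confronting the subtlety that only finitely additive base measures let mass escape onto unrealized (limit) types. The compactness-and-convexity route avoids all of this, needing only the two easy families of examples together with closedness of the image. Finally, the infinitely-variable case is identical: $S_I(T)$ is still a Stone space whose clopen sets are the $[\varphi]$ for finitary $\varphi$, strong quantifier elimination still applies, and every step above goes through verbatim.
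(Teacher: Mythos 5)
Your argument is correct in substance, and it is worth noting that the paper itself gives no detailed proof here: Corollary~\ref{c-types} is justified only by the remark that it ``follows from quantifier elimination and the axioms of $T^R$.'' So you are genuinely filling a gap rather than paralleling a written argument. Your forward direction (finitely additive set function on the clopen algebra via the Boolean, Measure, Validity and Transfer Axioms, vacuous countable additivity by compactness of clopens, then extension) is the expected route. Your surjectivity argument is the more interesting divergence: instead of directly constructing a pre-model realizing a given $\nu$ (which, as you say, would essentially re-run the representation theorems, or could alternatively be done via the extension criterion the paper only proves later as Proposition~\ref{prp:MeasureEqExtension}), you get it softly from continuity of $\rp\mapsto\nu_\rp$ on the compact space $S_n(T^R)$, injectivity via strong quantifier elimination, and the fact that a closed convex set containing all Dirac masses is all of $\FR(S_n(T))$; the Diracs and convex combinations are witnessed by constant random elements and weighted disjoint unions of randomizations. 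This buys economy at the cost of two small inaccuracies you should repair. First, the clopen sets generate only the Baire $\sigma$-algebra, not the full Borel $\sigma$-algebra, once $S_n(T)$ fails to be second countable (uncountable $L$, or the infinite-variable case, where ``goes through verbatim'' is too quick); likewise ``automatically regular'' is false for general Borel measures on compact Hausdorff spaces. Both uniqueness and the extension are still fine, but the correct statement is that the premeasure on clopens extends uniquely to a \emph{regular} Borel measure, and that two regular Borel measures agreeing on clopens coincide because every compact set is outer-approximated by clopens. Second, your Dirac-density and convexity steps tacitly assume $T$ complete (a single $\CM$ realizing a dense set of types, and Corollary~\ref{c-represent} for representing both $\rp$ and $\rq$ over the same $\CM$); this matches the paper's standing usage, but if one reads the corollary for general $T$ you must let the model range over the completions of $T$.
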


We may therefore identify the type space $S_n(T^R)$ with the space $\FR(S_n(T))$.

\begin{rmk} \label{measure-algebra}
 In the special case that $\CM$ is the trivial two-element structure with
only the identity relation, the above  results show that
the continuous theory of atomless measure algebras is complete and admits
quantifier elimination.  (See \cite{Keisler:Randomizing}, Section 7A).
This fact was given a direct proof in \cite{BenYaacov-Usvyatsov:CFO}.
\end{rmk}

\begin{rmk}  Throughout this paper, we could have worked with a one-sorted
randomization theory with only the sort $\BK$ instead of a two-sorted theory
with sorts $\BK$ and $\BB$.  In this formulation, we would use the results
of Section 7C of \cite{Keisler:Randomizing}, where the sort $\BB$ is eliminated.
This approach will be taken at the end of Section \ref{ss-main}.
The main advantages of the event sort $\BB$
are that it allows a nicer set
of axioms for $T^R$, and makes it easier to describe the models of $T^R$.
\end{rmk}

\section{Separable Structures}

In this section we consider small randomizations of $\CM$.
In order to cover the case that $T$ has  finite models, we adopt the
following convention:

\begin{center}
\textit{``$\CM$ is countable'' means that
the cardinality of $\CM$ is finite or $\omega$.}
\end{center}

\noindent So a complete first order theory $T$ is
$\omega$-categorical if and only if it either has a finite model or has a
unique model of cardinality $\omega$ up to isomorphism.

We first show that the randomization operation preserves $\omega$-categoricity.
 We will use
the following necessary and sufficient condition for a continuous theory to
be $\omega$-categorical.


\begin{fct} \label{t-ryll-nardz} (\cite{BenYaacov-Berenstein-Henson-Usvyatsov:NewtonMS}, Theorem 13.8).
Let $U$ be a complete continuous theory with a countable signature.
Then $U$ is $\omega$-categorical if and only if for each $n\geq 1$,
every type in $S_n(U)$ is realized in every model of $U$.
\end{fct}

\begin{thm}  \label{t-omega-cat}
Suppose $T$ has a countable signature.  Then
$T$ is $\omega$-categorical if and only if $T^R$ is $\omega$-categorical.
\end{thm}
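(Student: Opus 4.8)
The plan is to prove both directions through the Ryll--Nardzewski criterion, using the continuous version (Fact~\ref{t-ryll-nardz}) for $T^R$ and the classical version for $T$, glued together by the identification $S_n(T^R)\cong\FR(S_n(T))$ from Corollary~\ref{c-types}. Since $\omega$-categoricity is a property of complete theories, I take $T$ complete, so that $T^R$ is complete by Theorem~\ref{complete}, and I note that $L^R$ is countable whenever $L$ is. Classical Ryll--Nardzewski (together with the convention on finite models) says that $T$ is $\omega$-categorical if and only if $S_n(T)$ is finite for every $n$, in which case every type of $S_n(T)$ is isolated and realized in every model of $T$. The continuous criterion says that $T^R$ is $\omega$-categorical if and only if, for each $n$, every $\rp\in S_n(T^R)$ is realized in every model of $T^R$. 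So the whole theorem reduces to showing: $S_n(T)$ is finite for all $n$ if and only if every measure $\nu\in\FR(S_n(T))$ is realized (as the type associated to it by Corollary~\ref{c-types}) in every model of $T^R$.

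For the forward direction, suppose $S_n(T)=\{q_1,\dots,q_m\}$ is finite, so each $q_i$ is isolated by a formula $\theta_i$, and let $(\CK,\CB)$ be any model of $T^R$ and $\nu=\sum_i a_i\delta_{q_i}$ a measure on $S_n(T)$. The first step is to produce, for each $i$, a tuple $\overline{\rg}^{(i)}$ in $\CK$ that realizes $q_i$ surely, i.e. $\mu\llbracket\varphi(\overline{\rg}^{(i)})\rrbracket=1$ for all $\varphi\in q_i$: since $\exists\overline{x}\,\theta_i\in T$, the Transfer and Fullness Axioms (iterated over the tuple, using the perfect witnesses of Theorem~\ref{t-perfect}) give $\overline{\rg}^{(i)}$ with $\llbracket\theta_i(\overline{\rg}^{(i)})\rrbracket\doteq\top$, and because $\theta_i$ isolates $q_i$ the Transfer and Boolean Axioms force $\llbracket\theta_i(\overline{\rg}^{(i)})\rrbracket\sqsubseteq\llbracket\varphi(\overline{\rg}^{(i)})\rrbracket$ for each $\varphi\in q_i$. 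Next I would partition $\top$ into events $\sB_1,\dots,\sB_m$ with $\mu(\sB_i)=a_i$; this is possible because in a model the Atomless and Measure Axioms, combined with $d_\BB$-completeness, make the event algebra an atomless measure algebra in which every value in $[0,1]$ is attained. Finally, splicing the $\overline{\rg}^{(i)}$ along this partition --- exactly the ``agrees with $\dots$ on $\dots$'' construction from the proof of Theorem~\ref{t-perfect}, available since every event is some $\llbracket \rf=\rg\rrbracket$ --- yields $\overline{\rf}$ with $\sB_i\sqcap\llbracket\varphi(\overline{\rf})\rrbracket=\sB_i\sqcap\llbracket\varphi(\overline{\rg}^{(i)})\rrbracket$. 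Summing over the partition gives $\mu\llbracket\varphi(\overline{\rf})\rrbracket=\sum_{i:\varphi\in q_i}a_i=\nu(\{q:\varphi\in q\})$, so $\overline{\rf}$ realizes $\nu$; hence every type is realized in every model and $T^R$ is $\omega$-categorical.

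For the converse I argue by contraposition: assume $T$ is not $\omega$-categorical, so $S_n(T)$ is infinite for some $n$. Being an infinite Stone space it has a non-isolated point $q_0$, which is a non-principal type, so by the Omitting Types Theorem there is a countable model $\CM_0\models T$ omitting $q_0$. I would then take $(\CK,\CB)$ to be the standard randomization of $\CM_0$ built on an atomless \emph{countably additive} probability space (Borel functions into $\CM_0$ modulo null sets over $[0,1]$ with Lebesgue measure); this is a full randomization, hence a pre-model of $T^R$, and it is pre-complete because a $d_\BK$-Cauchy sequence of $\CM_0$-valued functions converges almost everywhere to such a function, so it is a model of $T^R$. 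Now consider the type $\rp\in S_n(T^R)$ with $\nu_\rp=\delta_{q_0}$. If some $\overline{\rf}$ realized $\rp$ in $(\CK,\CB)$ we would have $\mu\llbracket\varphi(\overline{\rf})\rrbracket=1$ for every $\varphi\in q_0$; enumerating $q_0=\{\varphi_k\}$ and using countable additivity, $\mu\bigl(\bigsqcap_k\llbracket\varphi_k(\overline{\rf})\rrbracket\bigr)=1$, whereas $\bigcap_k\{w:\CM_0\models\varphi_k(\overline{\rf}(w))\}=\{w:\overline{\rf}(w)\text{ realizes }q_0\}=\emptyset$ because $\CM_0$ omits $q_0$. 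This contradiction shows $\rp$ is not realized in $(\CK,\CB)$, so by Fact~\ref{t-ryll-nardz} the theory $T^R$ is not $\omega$-categorical.

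I expect the main obstacle to be precisely the additivity issue in the converse: a randomization only comes with a \emph{finitely} additive measure, and with mere finite additivity one can have events of measure $1$ whose intersection is empty, so $\delta_{q_0}$ would not obviously be omitted. The point of choosing the concrete Lebesgue-based randomization above is to have genuine countable additivity available; I should verify carefully both that this randomization is a model of $T^R$ (fullness via measurable selection into the countable $\CM_0$, pre-completeness via almost-everywhere convergence) and that the measure-algebra meet $\bigsqcap_k$ computes correctly against the pointwise intersection. In the forward direction the only comparable care is in justifying the partition of $\top$ into pieces of prescribed real measures and the splicing of tuples along it, both of which should follow routinely from the Atomless, Measure, Event and Fullness Axioms together with the perfect-witness construction of Theorem~\ref{t-perfect}.
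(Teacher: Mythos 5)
Your proposal is correct and follows essentially the same route as the paper: both directions go through Fact~\ref{t-ryll-nardz} and the identification $S_n(T^R)\cong\FR(S_n(T))$ of Corollary~\ref{c-types}, with the converse omitting the Dirac type $\nu_\rp=\delta_{q_0}$ in a $\sigma$-additive randomization of a countable model omitting $q_0$ (your Lebesgue-based model of Borel functions into a countable $\CM_0$ is exactly the paper's $(\CM^{[0,1]},\CL)$ from Corollary~\ref{c-precomplete}, since functions into a countable structure automatically have countable range), and the forward direction realizing an arbitrary $\nu=\sum_i a_i\delta_{q_i}$ by sure realizations of the isolated types, a partition of prescribed measures, and splicing via perfect witnesses, just as in the paper's proof. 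The only cosmetic difference is that you spell out the countable-additivity contradiction and build the omitting model by hand where the paper cites Ryll--Nardzewski and Corollaries~\ref{c-precomplete} and~\ref{c-represent}, and you work with perfect witnesses (Theorem~\ref{t-perfect}) in the abstract model rather than first representing it as a full randomization.
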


\begin{proof}
Suppose that $T$ is not $\omega$-categorical.  Then by the Ryll-Nardzewski Theorem,
for some $n$ there is an $n$-type $q\in S_n(T)$ which is omitted in some
countable model $\CM$ of $T$.   Then the type $\rp\in S_n(T^R)$
such that $\nu_\rp(\{q\})=1$ is  omitted in the separable pre-complete model $(\CM^{[0,1]},\CL)$,
of $T^R$, so $T^R$ is not $\omega$-categorical.

Now suppose $T$ is $\omega$-categorical.  Then $T=Th(\CM)$ for some $\CM$.
By Ryll-Nardzewski's theorem, for each $n$ the type space $S_n(T)$ is finite.
Let $\rp\in S_n(T^R)$.  By Corollary \ref{c-types}, $\nu_\rp\in\FR(S_n(T))$, and
for each formula $\varphi(\overline{x})$ of $L$,
$$\nu_\rp(\{q:\varphi(\overline{x})\in q\})=(\mu\llbracket\varphi(\overline{x})\rrbracket)^\rp.$$
Since $S_n(T)$ is finite, for each $q\in S_n(T)$ there is a formula
$\varphi_q(\overline{x})$ of $L$ such that $q$ is the set of $T$-consequences of $\varphi_q$.
Let $(\CK',\CB')$ be a model of $T^R$.  By Corollary \ref{c-represent}, $(\CK',\CB')$
is represented by some full pre-complete randomization $(\CK,\CB)$ of $\CM$.
By Fact \ref{t-ryll-nardz}, it suffices to show that $\rp$ is realized in $(\CK,\CB)$.
By the axioms and pre-completeness, there are events $\sB_q,  q\in S_n(T)$ in $\CB$
such that $\sB_q, q\in S_n(T)$ partitions $\Omega$ and $\mu(\sB_q)=\nu_\rp(\{q\})$.
Since $(\CK,\CB)$ has perfect witnesses, for each $q$ there is a tuple
$\overline{\rf}_q$ in $\CK$ such that
$\mu\llbracket\varphi_q(\overline{\rf}_q)\rrbracket=1$.
Using the Fullness and Event Axioms, there is a tuple $\overline{\rf}$ in $\CK$
which agrees with $\overline{\rf}_q$ on $\sB_q$ for each $q\in S_n(T)$.
It follows that $\overline{\rf}$ realizes $\rp$ in $(\CK,\CB)$, as required.
\end{proof}
\medskip

We next give a method for constructing small models of $T^R$,
and then introduce the notion of a strongly separable model of $T^R$.

\begin{dfn}
Let $(\Omega,\CB,\mu)$ be an atomless finitely additive probability space and let
$\CM$ be a structure for $L$.

A \textbf{$\CB$-deterministic element} of $\CM$ is a constant function in $\CM^\Omega$
(which may be thought of as an element of $\CM$).

A \textbf{$\CB$-simple random element} of $\CM$ is a $\CB$-measurable
function in $\CM^\Omega$ with finite range.

A \textbf{$\CB$-countable random element} of $\CM$ is a $\CB$-measurable
function in $\CM^\Omega$ with countable range .
\end{dfn}

\begin{exm}  \label{examples}
(\cite{Keisler:Randomizing}, Examples 4.6 and 4.11))

(i) The set  $\CK_S$ of $\CB$-simple random elements of
$\CM$ is full, and $(\CK_S,\CB)$ is a full randomization of $\CM$.

(ii)  The set  $\CK_C$ of $\CB$-countable random elements of
$\CM$ is full.  If $(\Omega,\CB,\mu)$ is a probability space (i.e. is $\sigma$-additive) then
$(\CK_C,\CB)$ is a full pre-complete randomization of $\CM$.
\end{exm}

\textit{We assume for the rest of this section that the signature $L$ of $T$ is countable}
\medskip

Note that when $L$ is countable, every
Borel probability measure on $S_n(T)$ is regular.
(This follows from \cite{Halmos:MeasureTheory}, page 228, and the fact that
every open set in $S_n(T)$ is a countable union of compact sets).

\begin{dfn} Let  $([0,1],\CL,\lambda)$
be the natural atomless Borel probability measure on $[0,1]$.
Given a model $\CM$ of $T$, let $(\CM^{[0,1]},\CL)$ be the pre-structure
whose universe of sort $\BK$ is
the set of $\CL$-countable random elements of $\CM$.
A pre-model $(\CK,\CB)$ of $T^R$ is \textbf{strongly separable} if
$(\CK,\CB)$ is elementarily pre-embeddable in
$(\CM^{[0,1]},\CL)$ for some countable model $\CM$ of $T$.
\end{dfn}

Note that $(\CM^{[0,1]},\CL)$ is only a pre-structure,
not a reduced pre-structure, because $\CL$ has nonempty null sets.

\begin{cor} \label{c-precomplete}
If $\CM$ is a countable model of $T$, then $(\CM^{[0,1]},\CL)$ is
a full randomization of $\CM$ and is a strongly separable pre-complete model of $T^R$.
\end{cor}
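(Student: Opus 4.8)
The plan is to assemble the three assertions---full randomization, pre-completeness, and strong separability---directly from the material already in place, so that almost all the work reduces to checking that the hypotheses of the cited results are met.

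First I would observe that $([0,1],\CL,\lambda)$ is an atomless $\sigma$-additive (hence in particular finitely additive) probability space: Lebesgue measure assigns measure zero to each point, so it is atomless, and it is countably additive by construction. By definition the sort-$\BK$ universe of $(\CM^{[0,1]},\CL)$ is the set $\CK_C$ of $\CL$-countable random elements of $\CM$. The first two assertions are then exactly the content of Example \ref{examples}(ii): since the underlying space is $\sigma$-additive, $(\CK_C,\CL)=(\CM^{[0,1]},\CL)$ is a \emph{full pre-complete} randomization of $\CM$.

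Next I would upgrade ``randomization'' to ``pre-complete model of $T^R$''. By the (first) restated Proposition 4.3 of \cite{Keisler:Randomizing}, every randomization of $\CM$ is a pre-model of $T^R$; indeed the Transfer Axioms hold because $\CM\models T$ forces $\llbracket\varphi\rrbracket\doteq\top$ for each $\varphi\in T$, while all remaining axioms of $T^R$ hold in any randomization, so completeness of $T$ is not needed here. Combined with the pre-completeness obtained from Example \ref{examples}(ii), this shows $(\CM^{[0,1]},\CL)$ is a pre-complete model of $T^R$. Strong separability is then immediate from the definition: a pre-model is strongly separable if it is elementarily pre-embeddable in $(\CM'^{[0,1]},\CL)$ for some countable model $\CM'$ of $T$, and since $\CM$ is itself countable we may take $\CM'=\CM$; the identity map is an isomorphism of $(\CM^{[0,1]},\CL)$ with itself and hence induces an (elementary) embedding of the associated continuous structures.

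I do not expect a genuine obstacle at this level, since the corollary is a packaging of Example \ref{examples}(ii) together with the definition of strong separability. The only substantive content sits inside Example \ref{examples}(ii), namely the pre-completeness of $(\CK_C,\CL)$: there one must show that a $d_\BK$-Cauchy sequence of $\CL$-countable random elements has an $\CL$-countable limit, which follows by a Borel--Cantelli argument using the $\sigma$-additivity of $\lambda$ to pass from approximate to exact pointwise agreement on a set of full measure (the limit's range is contained in a countable union of countable ranges, hence countable). As that result is cited as given, the present corollary requires only verifying its hypotheses, which hold.
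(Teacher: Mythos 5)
Your proposal is correct and follows essentially the same route as the paper, whose entire proof is the one-line citation of Example~\ref{examples}(ii); your additional verifications (that $([0,1],\CL,\lambda)$ is an atomless $\sigma$-additive probability space, that any randomization of a model of $T$ satisfies the $T^R$-axioms even when $T$ is incomplete, and that strong separability holds via the identity pre-embedding with $\CM'=\CM$) are exactly the routine checks the paper leaves implicit.
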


\begin{proof} By Example \ref{examples} (ii).
\end{proof}

\begin{lem}  \label{simple}
Consider a first order $L$-structure $\CM$.
Suppose $(\Omega,\CB,\mu)$ is a probability space and
$\CA$ is a subalgebra of $\CB$ which is dense with respect to the distance predicate
$d_\BB(\sU,\sV)=\mu(\sU\triangle \sV)$.
Then the set $\CK_S$ of $\CA$-simple random elements of $\CM$ is  dense in the set $\CK$
of $\CB$-countable random elements of $\CM$ with respect to the distance predicate $d_\BK$.
Therefore $(\CK_S,\CA)$ and $(\CK,\CB)$
induce the same continuous structure, and $\widehat{\CK}_S=\widehat{\CK}$.
\end{lem}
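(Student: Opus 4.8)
The plan is to prove the density statement $\CK_S\subseteq\CK$ first and then deduce the claim about induced structures from the general fact that $1$-Lipschitz operations extend uniquely to metric completions. For density, I would fix $\rf\in\CK$ with countable range $\{a_i:i\in\omega\}$ and $\varepsilon>0$ and approximate in two stages. The first stage reduces to the finite-range case: writing $\sB_i=\rf^{-1}(\{a_i\})\in\CB$, the sets $\sB_i$ partition $\Omega$ with $\sum_i\mu(\sB_i)=1$, so I may choose $N$ with $\sum_{i>N}\mu(\sB_i)<\varepsilon/2$ and set $\rf'$ equal to $\rf$ on $\sB_0,\dots,\sB_N$ and equal to $a_0$ elsewhere. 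Then $\rf'$ is a $\CB$-simple random element and, since the disagreement set is contained in $\bigcup_{i>N}\sB_i$, we get $d_\BK(\rf,\rf')\le\sum_{i>N}\mu(\sB_i)<\varepsilon/2$.

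The second stage---replacing the $\CB$-simple $\rf'$ by an $\CA$-simple element---is the crux. Here $\rf'$ is determined by the finite $\CB$-partition $\sB_0,\dots,\sB_N$, and I want an $\CA$-partition $\sC_0,\dots,\sC_N$ with $\sum_j\mu(\sB_j\triangle\sC_j)$ small; the $\CA$-simple element $\rg$ taking value $a_j$ on $\sC_j$ then satisfies $d_\BK(\rf',\rg)\le\sum_j\mu(\sB_j\triangle\sC_j)$, because the disagreement set is contained in $\bigcup_j(\sB_j\setminus\sC_j)$. Density of $\CA$ in $\CB$ supplies, for any $\delta>0$, sets $\sC_j'\in\CA$ with $\mu(\sB_j\triangle\sC_j')<\delta$, but these need not partition $\Omega$. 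The key move is to disjointify inside the Boolean algebra $\CA$: put $\sC_j=\sC_j'\setminus\bigcup_{i<j}\sC_i'$ for $j<N$ and $\sC_N=\Omega\setminus\bigcup_{i<N}\sC_i'$, all of which lie in $\CA$ and partition $\Omega$. Since the $\sB_i$ are pairwise disjoint, one checks $\mu(\sC_i'\cap\sC_j')<2\delta$ for $i\ne j$, whence each $\mu(\sB_j\triangle\sC_j)$ is at most a fixed multiple of $N\delta$; choosing $\delta$ small enough in terms of $N$ and $\varepsilon$ gives $\sum_j\mu(\sB_j\triangle\sC_j)<\varepsilon/2$ and hence $d_\BK(\rf',\rg)<\varepsilon/2$. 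Combining the two stages yields $d_\BK(\rf,\rg)<\varepsilon$, so $\CK_S$ is dense in $\CK$.

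For the final assertion I would first verify that $(\CK_S,\CA)$ is genuinely a sub-pre-structure of $(\CK,\CB)$ for $L^R$: $\CA$ is closed under the Boolean operations since it is a subalgebra, and for an $\CA$-simple tuple $\overline{\rf}$ the event $\llbracket\varphi(\overline{\rf})\rrbracket$ is a finite union of cells of the common $\CA$-refinement of the partitions defining the coordinates of $\overline{\rf}$, hence lies in $\CA$. Given this, $\CA\subseteq\CB$ and $\CK_S\subseteq\CK$ are both dense (the former by hypothesis, the latter just proved), and every symbol of $L^R$ is $1$-Lipschitz in each argument, so each operation extends uniquely and continuously to the metric completions. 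Since a dense subset has the same completion as the ambient space, the completions of $(\CK_S,\CA)$ and $(\CK,\CB)$ coincide as continuous structures; in particular $\widehat{\CK}_S=\widehat{\CK}$.

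I expect the disjointification error estimate in the second stage to be the only genuinely delicate point. The truncation of the first stage and the extension-to-completion argument are routine, whereas turning the individually close but overlapping approximants $\sC_j'$ into an honest $\CA$-partition while keeping the accumulated measure error below $\varepsilon/2$ is where the hypothesis that $\CA$ is a \emph{subalgebra} (not merely a dense subset) is essential.
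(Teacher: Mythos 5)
Your proof is correct and follows essentially the same route as the paper's: truncate the countable range to a finite one losing measure at most $\varepsilon/2$, approximate each cell by an element of $\CA$, disjointify the approximants inside $\CA$ with an error bound of order $N\delta$ per cell (matching the paper's choice $\delta \sim \varepsilon/(4n^2)$), and read off the $\CA$-simple approximant. Your two extra touches --- the intermediate $\CB$-simple function and the explicit verification that $(\CK_S,\CA)$ is a sub-pre-structure whose $1$-Lipschitz operations extend to the common completion --- merely spell out steps the paper leaves implicit.
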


\begin{proof}  By Example \ref{examples}, $(\CK_S,\CA)$ and $(\CK,\CB)$ are full randomizations of $\CM$.
Let $\rf$ be an element of $\CK$ with range $\{ a_n: n\in\mathbb{N}\}$.
Let $\varepsilon>0$.  Take $n$ such that
$$\mu\left\llbracket\bigvee_{m<n}(\rf=a_m)\right\rrbracket>1-\varepsilon/2.$$
For each $m<n$, let
$$ \sB_m=\{w:\rf(w)=a_m\}\in\CB.$$
Then the sets $\sB_m$ are disjoint and
$$ \mu\left[\bigcup_{m<n} \sB_m\right]> 1-\varepsilon/2.$$
For each $m<n$, there is a set $\sA_m\in\CA$ such that $d_\BB(\sA_m,\sB_m)<\varepsilon/(4n^2)$.
Let $\sC_m=\sA_m\setminus\bigcup_{k<m}\sA_k$. Then $\sC_m\in\CA$, the sets $\sC_m$
are disjoint, and one can check that $d_\BB(\sC_m,\sB_m)<\varepsilon/(2n)$.
There is an $\CA$-simple $\rg\in\CK_S$ such that $\rg(w)=a_m$ whenever $m<n$ and $w\in \sC_m$.
Then $\rf(w)=\rg(w)$ whenever $w\in \sB_m\cap \sC_m$, so
$$\mu\llbracket \rf= \rg\rrbracket>1-\varepsilon,$$
and hence
 $d_\BK(\rf,\rg)<\varepsilon$.  This shows that $\CK_S$ is dense in $\CK$.
\end{proof}

\begin{cor}  \label{c-separable}
Every strongly separable pre-model of $T^R$ is separable.
\end{cor}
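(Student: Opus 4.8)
The plan is to reduce the statement to the separability of the single pre-structure $(\CM^{[0,1]},\CL)$ for a fixed countable model $\CM$, and then to establish that separability by exhibiting an explicit countable dense substructure in each of the two sorts.

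First I would observe that separability passes to elementarily pre-embedded structures. By definition of strong separability, if $(\CK,\CB)$ is strongly separable then $(\widehat{\CK},\widehat{\CB})$ embeds elementarily, hence isometrically, into $(\widehat{\CM^{[0,1]}},\widehat{\CL})$ for some countable $\CM$. Since an isometric image of a metric space sitting inside a separable metric space is again separable, it will suffice to show that $(\CM^{[0,1]},\CL)$ is separable; the equivalence of separability for a pre-structure and for its completion (recorded just after the definition of $(\widehat{\CK},\widehat{\CB})$) then yields that $(\CK,\CB)$ itself is separable.

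To prove that $(\CM^{[0,1]},\CL)$ is separable, I would produce a countable dense substructure of both sorts at once. For the event sort, take $\CA$ to be the Boolean algebra generated by the subintervals of $[0,1]$ with rational endpoints; this $\CA$ is countable and, by a standard approximation argument, dense in $\CL$ with respect to $d_\BB(\sU,\sV)=\lambda(\sU\triangle\sV)$. For the sort $\BK$, I would apply Lemma~\ref{simple} with $(\Omega,\CB,\mu)=([0,1],\CL,\lambda)$, which is genuinely a (countably additive) probability space, together with this subalgebra $\CA$. The lemma then gives that the set $\CK_S$ of $\CA$-simple random elements of $\CM$ is dense, with respect to $d_\BK$, in the set $\CK$ of $\CL$-countable random elements of $\CM$, and the latter is exactly the universe of sort $\BK$ of $(\CM^{[0,1]},\CL)$. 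Finally, since $\CM$ and $\CA$ are both countable, every $\CA$-simple function is determined by a finite partition of $[0,1]$ into $\CA$-sets together with a finite tuple of values from $\CM$, so $\CK_S$ is countable. Thus $(\CK_S,\CA)$ is a countable dense substructure, and $(\CM^{[0,1]},\CL)$ is separable.

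The only genuinely non-formal input is the density of $\CA$ in the measure algebra $(\CL,d_\BB)$, which is the classical fact that finite unions of rational intervals approximate any measurable subset of $[0,1]$ arbitrarily well in measure; everything else is bookkeeping with Lemma~\ref{simple}, the countability of $\CK_S$ and $\CA$, and the remark that separability descends through isometric (hence elementary) embeddings. I therefore expect no serious obstacle, the main care being only to cite Lemma~\ref{simple} with the correct choice of $\CA$ and to check that $\CK_S$ is countable.
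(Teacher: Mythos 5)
Your proof is correct and takes essentially the same route as the paper: the paper likewise reduces to the separability of $(\CM^{[0,1]},\CL)$ via the elementary pre-embedding, fixes a countable dense subalgebra $\CA$ of $\CL$, and invokes Lemma~\ref{simple} to obtain a countable $d_\BK$-dense set of $\CA$-simple random elements. Your version only fills in details the paper leaves implicit, namely the explicit choice of $\CA$ as the algebra generated by rational intervals, the verification that $\CK_S$ is countable, and the observation that separability descends through isometric (elementary) embeddings.
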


\begin{proof}  Let $(\CK,\CB)$ be a strongly separable pre-model of $T^R$, so that
$(\CK,\CB)$ is elementarily pre-embeddable in $(\CM^{[0,1]},\CL)$ for some countable
model $\CM$ of $T$.
$\CL$ has a countable dense subalgebra $\CA$.  By Lemma \ref{simple},
the set of $\CA$-simple random elements of $\CM$ is a countable dense subset
of $\CM^{[0,1]}$ with respect to $d_\BK$.  Therefore $(\CM^{[0,1]},\CL)$
is separable, and hence $(\CK,\CB)$ is separable.
\end{proof}

\begin{lem}  \label{l-strongsep}  Let $T$ be complete, $\rp\in S_n(T^R)$, and  $\nu_\rp$ be
the corresponding measure on $S_n(T)$ defined in Corollary \ref{c-types}.
Then $\rp$ is realized in
some strongly separable model of $T^R$
if and only if there is a countable set $C\subseteq S_n(T)$
such that $\nu_\rp(C)=1$.
\end{lem}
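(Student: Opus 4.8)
The plan is to prove both implications through the identification of $\rp\in S_n(T^R)$ with the measure $\nu_\rp\in\FR(S_n(T))$ furnished by Corollary \ref{c-types}, together with the standard fact that a regular Borel measure on the Stone space $S_n(T)$ is determined by its values on the clopen sets $\{q:\varphi(\overline{x})\in q\}$. Throughout I use the criterion that a tuple $\overline{\rf}$ realizes $\rp$ in a model of $T^R$ if and only if $\mu\llbracket\varphi(\overline{\rf})\rrbracket=\nu_\rp(\{q:\varphi\in q\})$ for every first order formula $\varphi$; this is exactly the bijectivity in Corollary \ref{c-types} (which itself rests on the strong quantifier elimination of Theorem \ref{qe}), since the type of $\overline{\rf}$ corresponds to the measure sending each clopen $\{q:\varphi\in q\}$ to $\mu\llbracket\varphi(\overline{\rf})\rrbracket$.

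For the direction from a countable $C$ to a realization, I would first observe that since $C$ is countable and $\nu_\rp(C)=1$, the measure $\nu_\rp$ is purely atomic: writing $C=\{q_i\}$ and $c_i=\nu_\rp(\{q_i\})$ (singletons are closed, hence Borel, in the compact Hausdorff space $S_n(T)$), countable additivity gives $\sum_i c_i=1$ and $\nu_\rp=\sum_i c_i\,\delta_{q_i}$. Next I would produce a single countable model $\CM\models T$ realizing every $q_i$: realizing countably many complete types is finitely satisfiable hence consistent, and the downward Löwenheim--Skolem theorem (valid since $L$ is countable in this section) yields a countable elementary submodel containing realizations $\overline{a}_i\models q_i$. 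I then partition $[0,1]$ into measurable sets $E_i$ with $\lambda(E_i)=c_i$ (possible since $\lambda$ is atomless and $\sum_i c_i=1$) and define $\overline{\rf}\in\CM^{[0,1]}$ by putting $\overline{\rf}(w)=\overline{a}_i$ for $w\in E_i$; this is an $\CL$-countable random element. A direct computation gives $\mu\llbracket\varphi(\overline{\rf})\rrbracket=\sum_{i:\varphi\in q_i}c_i=\nu_\rp(\{q:\varphi\in q\})$, so $\overline{\rf}$ realizes $\rp$ in $(\CM^{[0,1]},\CL)$, which is a strongly separable pre-complete model of $T^R$ by Corollary \ref{c-precomplete}.

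For the converse, suppose $\rp$ is realized in a strongly separable model; by definition that model is elementarily pre-embeddable in some $(\CM^{[0,1]},\CL)$ with $\CM$ countable, and since elementary embeddings preserve types I may assume $\rp$ is realized in $(\widehat{\CM^{[0,1]}},\widehat{\CL})$. Because $(\CM^{[0,1]},\CL)$ is pre-complete, this completion is just its reduced pre-structure, so the realizing element is the class of an honest tuple $\overline{\rf}$ of $\CL$-countable random elements, which therefore has countable range $\{\overline{b}_k\}$. Setting $q_k=\mathrm{tp}^{\CM}(\overline{b}_k)$, $C=\{q_k\}$, and $D_k=\{w:\overline{\rf}(w)=\overline{b}_k\}$ with $c_k=\lambda(D_k)$, the measure $\nu'=\sum_k c_k\,\delta_{q_k}$ satisfies $\nu'(\{q:\varphi\in q\})=\sum_{k:\varphi\in q_k}c_k=\mu\llbracket\varphi(\overline{\rf})\rrbracket=\nu_\rp(\{q:\varphi\in q\})$ for every $\varphi$. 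As $\nu'$ and $\nu_\rp$ are regular Borel probability measures agreeing on the clopen algebra generating the Borel $\sigma$-algebra of $S_n(T)$, they coincide, whence $\nu_\rp(C)=\nu'(C)=1$.

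I expect the care to lie in two places rather than in a single deep obstacle. First, in the converse one must justify that the realizing tuple may be taken to be a genuine $\CL$-countable random element with countable range; this rests squarely on the pre-completeness of $(\CM^{[0,1]},\CL)$, ensuring no Cauchy-completion elements intervene and that \emph{countable} range is preserved. Second, the measure-theoretic uniqueness step requires invoking that two regular Borel measures on $S_n(T)$ agreeing on all clopen sets are equal. The remaining bookkeeping---atomicity of $\nu_\rp$, existence of the countable model realizing all of $C$, and the defining computation of $\mu\llbracket\varphi(\overline{\rf})\rrbracket$---is routine.
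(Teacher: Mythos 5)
Your proposal is correct and follows essentially the same route as the paper's proof: for one direction you take the types of the (countable) range of a realizing tuple in the pre-complete model $(\CM^{[0,1]},\CL)$, and for the other you build a piecewise-constant tuple in $\CM^{[0,1]}$ whose type distribution over a countable model realizing all of $C$ matches $\nu_\rp$. The extra details you supply (atomicity of $\nu_\rp$, existence of the countable model via compactness and L\"owenheim--Skolem, and the uniqueness of regular Borel measures agreeing on clopens, which is exactly the uniqueness clause of Corollary~\ref{c-types}) are precisely the steps the paper leaves implicit.
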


\begin{proof}  Suppose $\rp$ is realized in
some strongly separable model of $T^R$.
Then $\rp$ is realized by an $n$-tuple $\overline{\rg}$ in the pre-complete model
$(\CM^{[0,1]},\CL)$ of $T^R$ for some countable model $\CM$ of $T$.
Hence $\nu_\rp(C)=1$ where $C$ is the set of types of elements of the range of $\overline{\rg}$.

Suppose  $\nu_\rp(C)=1$ for some countable set $C\subseteq S_n(T)$, and let $\CM$ be a countable
model of $T$ which realizes each $q\in C$.
Then $\rp$ is realized in the strongly separable pre-model $(\CM^{[0,1]},\CL)$ of $T^R$
by any tuple  $\overline{\rg}$ such that
$$\lambda(\{r:tp^{\CM}(\overline{\rg}(r))=q\})=\nu_\rp(\{q\})$$
for each $q\in C$.
\end{proof}


\begin{exm}  (i) Let $T$ be the first order theory with countably many
independent unary relations $P_0,P_1,\ldots$, and let $\rp$ be the type in $S_1(T^R)$
such that the events $\llbracket P_n(x)\rrbracket, n\in\mathbb{N}$ are independent
with respect to $\rp$.  Then $\rp$ is not realized in a strongly separable model of
$T^R$, and hence $T^R$ has separable models which are not strongly separable.

(ii)  Let $\bR$ be the ordered field of real numbers, as a first order rather than
continuous structure.  Let $\CK$ be the set of all
Lebesgue measurable functions from $[0,1]$ into $\bR$.  By Proposition 4.12 in \cite{Keisler:Randomizing},
 $(\CK,\CL)$ is a full pre-complete randomization of $\bR$.   $(\CK,\CL)$ is not separable, and the
 type of the identity function on $[0,1]$ in $(\CK,\CL)$ is not realized in
a strongly separable model.
\end{exm}


\begin{lem} \label{l-omega-sat}
Let $\CM$ be countable.  Then $\CM$ is $\omega$-saturated if and only if $(\CM^{[0,1]},\CL)$
is $\omega$-saturated as a continuous pre-structure.
\end{lem}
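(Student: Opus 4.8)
The plan is to prove the two implications separately, in both cases exploiting the identification of continuous types with measures on first order type spaces from Corollary \ref{c-types}, quantifier elimination (Theorem \ref{qe}), and the fact that $(\CM^{[0,1]},\CL)$ is a full pre-complete randomization of $\CM$ with perfect witnesses (Corollary \ref{c-precomplete}, Theorem \ref{t-perfect}). Throughout, ``$\omega$-saturated'' means that every type over a finite parameter set is realized; as noted after Theorem \ref{qe} it suffices to treat types of sort $\BK$ over finitely many parameters $\overline{\rg}$ of sort $\BK$.

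Assume first that $\CM$ is $\omega$-saturated. The \emph{key} preliminary observation is that, because $\CM$ is countable and $\omega$-saturated, the map $\overline{b}\mapsto\mathrm{tp}^{\CM}(\overline{b}/A)$ from $\CM^n$ onto $S_n(T(A))$ is surjective for every finite $A\subseteq\CM$, so each space $S_n(T(A))$ is countable and every Borel probability measure on it is purely atomic and concentrated on types realized in $\CM$. Now let $\rp\in S_n(T^R(\overline{\rg}))$ with $\overline{\rg}$ a finite tuple of $\CL$-countable random elements. Since each coordinate of $\overline{\rg}$ has countable range, there is a partition (mod null) $[0,1]=\bigsqcup_i \sB_i$ with $\overline{\rg}$ equal to a constant tuple $\overline{a}_i\in\CM$ on $\sB_i$. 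Over $\sB_i$ the parameters $\overline{\rg}$ reduce to the constants $\overline{a}_i$, so the relativization of $\rp$ to $\sB_i$ is, by the parametrized version of Corollary \ref{c-types}, a measure $\nu_i\in\FR(S_n(T(\overline{a}_i)))$; by the preliminary observation each $\nu_i$ is atomic and supported on realized types. I would then use the atomlessness of $\CL$ to subdivide each $\sB_i$ into events $\sB_{i,q}$ of measure $\nu_i(\{q\})\,\mu(\sB_i)$, place on $\sB_{i,q}$ a constant tuple of $\CM$ realizing $q$ (available by $\omega$-saturation), and let $\overline{\rf}$ be the resulting function. Since $\overline{\rf}$ takes countably many values it lies in $(\CM^{[0,1]})^n$, and a computation of $\mu\llbracket\varphi(\overline{\rf},\overline{\rg})\rrbracket$ summing over the pieces shows that it agrees with the prescribed values $(\mu\llbracket\varphi(\overline{x},\overline{\rg})\rrbracket)^\rp$ for every first order $\varphi$; by quantifier elimination $\overline{\rf}$ realizes $\rp$.

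Conversely, assume $(\CM^{[0,1]},\CL)$ is $\omega$-saturated, and let $q\in S_n(T(A))$ with $A=\overline{a}$ a finite tuple from $\CM$, viewed as $\CL$-deterministic elements of $\CM^{[0,1]}$. Define $\rp$ over $\overline{a}$ by asking $(\mu\llbracket\varphi(\overline{x},\overline{a})\rrbracket)^\rp=1$ for $\varphi\in q$ and $=0$ otherwise, extended to all $L^R$-formulas through quantifier elimination. This $\rp$ is consistent: choosing $\CN\succeq\CM$ that realizes $q$ by some $\overline{b}$, the deterministic element $\overline{b}$ of $(\CN^{[0,1]},\CL)\models T^R$ realizes $\rp$ over $\overline{a}$, and $\mathrm{tp}(\overline{a})$ is unchanged since $\CM\preceq\CN$. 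Thus $\rp\in S_n(T^R(\overline{a}))$, so by $\omega$-saturation it is realized by some $\overline{\rf}\in(\CM^{[0,1]})^n$. Then $\mu\llbracket\varphi(\overline{\rf},\overline{a})\rrbracket=1$ for every $\varphi\in q$; since $L$ is countable and $\CL$ is $\sigma$-additive, the countable intersection $\bigcap_{\varphi\in q}\{w:\CM\models\varphi(\overline{\rf}(w),\overline{a})\}$ still has measure $1$, hence is nonempty, and any $w$ in it gives $\overline{\rf}(w)\in\CM^n$ realizing $q$ in $\CM$.

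The main obstacle is the forward direction: making precise the disintegration of $\rp$ into the conditional measures $\nu_i$ over the partition $\{\sB_i\}$ and verifying that the glued function $\overline{\rf}$ is $\CL$-measurable with the correct values of $\mu\llbracket\varphi(\overline{\rf},\overline{\rg})\rrbracket$. The conceptual content, however, is cleanly localized in the observation that $\omega$-saturation together with countability of $\CM$ forces the relevant type spaces $S_n(T(\overline{a}_i))$ to be countable, so that the conditional type measures are automatically atomic and supported on realized types; the rest is the measurable bookkeeping enabled by the atomlessness of $\CL$.
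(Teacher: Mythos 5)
Your overall architecture is the same as the paper's: the converse direction via deterministic parameters (realize the induced type $\rp$ with $(\mu\llbracket\varphi(\overline{x},\overline{a})\rrbracket)^{\rp}=1$ for $\varphi\in q$, then use countability of $L$ and $\sigma$-additivity of $\CL$ to intersect the measure-one events and extract a point $w$ with $\overline{\rf}(w)\models q$ — the paper states this direction contrapositively via constant functions, but it is the same idea); and the forward direction via the partition $\sB_i=\llbracket\overline{\rg}=\overline{a}_i\rrbracket$, the countability of the local type spaces $S_n(T(\overline{a}_i))$ (forced by countability plus $\omega$-saturation of $\CM$), atomic conditional measures supported on realized types, and gluing constant realizations on an atomless subdivision. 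That is precisely the skeleton of the paper's proof.

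However, the step you flagged as ``the main obstacle'' is a genuine gap as you have written it, and your proposed tool (a ``parametrized version of Corollary \ref{c-types}'') does not repair it. The relativization of $\rp$ to $\sB_i$ is \emph{not well-defined from $\rp$ alone}: the type $\rp\in S_n(T^R(\overline{\rg}))$ only assigns values to formulas with parameters $\overline{\rg}$, and the event $\sB_i=\llbracket\overline{\rg}=\overline{a}_i\rrbracket$ need not be definable over $\overline{\rg}$. Concretely, if two distinct values $\overline{a}_i\neq\overline{a}_j$ in the range of $\overline{\rg}$ realize the same first order type, no formula over $\overline{\rg}$ separates $\sB_i$ from $\sB_j$, so $\rp$ cannot determine the conditional measures $\nu_i$ and $\nu_j$ separately. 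The paper's fix is exactly the move your sketch is missing: first \emph{extend} $\rp$ to a type $\rp'$ over $\rA\cup\overline{\rg}$, where $\rA$ is the set of all deterministic elements (so that $\mu[\llbracket\varphi(x,\overline{a}_i)\rrbracket\sqcap\sB_i]$ becomes a formula over the parameter set), then define $\alpha(\varphi,i)=(\mu[\llbracket\varphi(x,\overline{a}_i)\rrbracket\sqcap\sB_i])^{\rp'}$ and take $\beta(q,i)=\inf\{\alpha(\varphi,i)\colon\varphi\in q\}$ as the conditional masses; the countability of $S_1(T(\overline{a}_i))$ is then used not only for atomicity but to guarantee no mass escapes, i.e.\ $\sum_q\beta(q,i)=\mu[\sB_i]$, which is what makes the gluing construction come out with the right total measures. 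Since any realization of (enough of) $\rp'$ restricts to a realization of $\rp$, this extension costs nothing. With that one-line repair your argument coincides with the paper's; everything else in your proposal (measurability of the glued $\overline{\rf}$, the summation check, the appeal to quantifier elimination) is sound.
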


\begin{proof}
Suppose first that $\CM$ is not $\omega$-saturated.  Then there is a tuple $\overline{a}$ in $\CM$,
a countable elementary extension $\CN$ of $\CM$, and an element $b\in\CN$
such that the type of $b$ in $(\CN,\overline{a})$ is not realized in $(\CM,\overline{a})$.
By considering the types of the constant functions from $[0,1]$ to $\overline a$ and $b$, we see that
$\CM^{[0,1]}$ is not $\omega$-saturated.

Now suppose that $\CM$ is $\omega$-saturated.
Let $T$ be the complete theory of $\CM$ and let $\CK=\CM^{[0,1]}$.
It suffices to show that for each finite tuple $\overline{\rg}$ in $\CK$,
every type $\rp\in S_1(T^R(\overline{\rg}))$ is realized in $(\CK,\CL)$.
Let $\{\overline{b}_n:n<\omega\}$ be the range of $\overline{\rg}$ in $\CM$ and
let
$$\sB_n=\{r\in [0,1]:\overline{\rg}(r)=\overline{b}_n\}=\llbracket\overline{\rg}=\overline{b}_n\rrbracket.$$
Note that $\sB_n, n\in\omega$ is a partition of $[0,1]$ in $\CL$, and
$$\sum_n \mu[\sB_n]=1.$$  Since $\CM$
is  countable and $\omega$-saturated, for each $n$ there are countably many types
$q\in S_1(T(\overline{b}_n))$, and each of these types is realized by an element
$a_q$ in $\CM$.  Let $\rp\in S_1(T^R(\overline{\rg}))$.
Let $\rA$ be the set of all deterministic elements of $\CK$, and extend $\rp$
to a type $\rp'$ in \hbox{$S_1(T^R(\rA\cup\overline{\rg}))$}.
For each $n$ and first order formula $\varphi(x,\overline{b}_n)$,
$\mu[\llbracket\varphi(x,\overline{b}_n)\rrbracket\sqcap \sB_n]$
is a continuous formula with parameters in $\rA\cup \overline{\rg}$.  Let
$$\alpha(\varphi,n)=(\mu[\llbracket\varphi(x,\overline{b}_n)\rrbracket\sqcap \sB_n])^{\rp'}$$
and for each $q\in S_1(T(\overline{b}_n))$ let
$$\beta(q,n)=\inf\{\alpha(\varphi,n):\varphi\in q\}.$$
Since $S_1(T(\overline{b}_n))$ is countable for each $n$, we have
$$ \sum\{\beta(q,n):q\in S_1(T(\overline{b}_n))\}=\mu[\sB_n].$$
There is a function $\rf\in\CK$ such that for each $n$ and $q\in S_1(T(\overline{b}_n))$,
$$ \mu\{r\in \sB_n: \rf(r)=a_q\}=\beta(q,n).$$
Then for each $n$ and first order formula $\varphi(x,b_n)$,
$$\mu[\llbracket \varphi(\rf,\overline{\rg})\rrbracket\sqcap \sB_n]\geq \alpha(\varphi,n).$$
It follows that $\rf$ realizes $\rp$ in $(\CK,\CL)$.
\end{proof}

\begin{thm}  \label{t-omega-sat}
Let $T$ be complete.  The following are equivalent.

(i)  $T$ has a countable $\omega$-saturated model.

(ii)  $T^R$ has a separable $\omega$-saturated model.

(iii)  Every separable model of $T^R$ is strongly separable.
\end{thm}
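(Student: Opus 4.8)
The plan is to prove the cycle of implications (i)$\Rightarrow$(ii)$\Rightarrow$(iii)$\Rightarrow$(i). Throughout I will use the classical first order fact that, since $L$ is countable and $T$ is complete, $T$ has a countable $\omega$-saturated model if and only if $T$ is \emph{small}, i.e.\ $S_n(T)$ is countable for every $n$.

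For (i)$\Rightarrow$(ii): let $\CM$ be a countable $\omega$-saturated model of $T$. By Lemma~\ref{l-omega-sat} the pre-structure $(\CM^{[0,1]},\CL)$ is $\omega$-saturated, by Corollary~\ref{c-precomplete} it is pre-complete, and by Corollary~\ref{c-separable} it is separable. Since it is pre-complete, its induced continuous structure coincides with its reduced pre-structure, and realizability of types is unaffected by this passage; hence the induced structure is a separable $\omega$-saturated model of $T^R$, giving (ii).

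The heart of the argument is (ii)$\Rightarrow$(iii). Let $(\CN,\CD)$ be a separable $\omega$-saturated model of $T^R$; I first claim each $S_n(T)$ is countable. For $q\in S_n(T)$ let $\rp_q\in S_n(T^R)$ be the type with $\nu_{\rp_q}$ the point mass at $q$ (Corollary~\ref{c-types}); as $\rp_q$ is a type over $\emptyset$, $\omega$-saturation yields a tuple $\overline{\rf}_q$ realizing it, so $\mu\llbracket\varphi(\overline{\rf}_q)\rrbracket=1$ exactly when $\varphi\in q$. If $q\neq q'$, choosing $\varphi\in q$ with $\neg\varphi\in q'$ gives $\llbracket\overline{\rf}_q=\overline{\rf}_{q'}\rrbracket\sqcap\llbracket\varphi(\overline{\rf}_q)\rrbracket\le\llbracket\varphi(\overline{\rf}_{q'})\rrbracket$, whence $\mu\llbracket\overline{\rf}_q=\overline{\rf}_{q'}\rrbracket=0$ and the $n$-tuples $\overline{\rf}_q$ are pairwise at distance at least $1/n$ in $\BK^n$. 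Since $\BK^n$ is separable, only countably many such tuples can exist, so $S_n(T)$ is countable. Hence $T$ is small and admits a countable $\omega$-saturated model $\CM$; by (i)$\Rightarrow$(ii) the completion of $(\CM^{[0,1]},\CL)$ is a separable $\omega$-saturated model, while $(\CM^{[0,1]},\CL)$ is strongly separable by Corollary~\ref{c-precomplete}. Given any separable model $(\CK,\CB)$ of $T^R$, a back-and-forth along a countable dense subset of $\CK$, using $\omega$-saturation to realize successive types over finite parameter sets, embeds $(\widehat{\CK},\widehat{\CB})$ elementarily into this completion; thus $(\CK,\CB)$ is elementarily pre-embeddable in $(\CM^{[0,1]},\CL)$, i.e.\ strongly separable, which is (iii). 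I expect the claim that a separable $\omega$-saturated model forces each $S_n(T)$ to be countable to be the \emph{main obstacle}: separability alone is not violated by realizing a single type, even one with atomless $\nu_\rp$, and the point of the point-mass types $\rp_q$ is precisely that their realizations must disagree almost surely and so lie uniformly far apart, turning uncountability of $S_n(T)$ into an uncountable uniformly separated family that a separable structure cannot contain.

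For (iii)$\Rightarrow$(i) I argue contrapositively. If $T$ has no countable $\omega$-saturated model then $T$ is not small, so $S_n(T)$ is uncountable for some $n$. Being an uncountable compact metrizable space, $S_n(T)$ carries an atomless regular Borel probability measure $\nu$; by Corollary~\ref{c-types} there is $\rp\in S_n(T^R)$ with $\nu_\rp=\nu$, and since $\nu$ assigns measure zero to every countable set, Lemma~\ref{l-strongsep} shows $\rp$ is realized in no strongly separable model of $T^R$. Realizing $\rp$ in some model and passing to a separable elementary substructure containing the realization (downward Löwenheim--Skolem for the countable signature $L^R$) yields a separable model of $T^R$ that is not strongly separable, so (iii) fails. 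This closes the cycle and establishes the equivalence.
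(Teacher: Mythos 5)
Your proof is correct and uses essentially the same arguments as the paper: the point-mass types $\rp_q$ whose realizations form an uncountable uniformly separated family in a separable sort, the $\omega$-saturated pre-model $(\CM^{[0,1]},\CL)$ via Lemma \ref{l-omega-sat} together with Corollaries \ref{c-precomplete} and \ref{c-separable}, and an atomless measure on an uncountable $S_n(T)$ combined with Lemma \ref{l-strongsep} are exactly the paper's three ingredients. The only difference is organizational: you arrange the implications as a cycle (i)$\Rightarrow$(ii)$\Rightarrow$(iii)$\Rightarrow$(i), with your (ii)$\Rightarrow$(iii) factoring through (i), whereas the paper proves (i)$\Rightarrow$(ii),(iii) directly and the other two contrapositively; you also spell out the elementary embedding of a separable model into the saturated one and the downward L\"owenheim--Skolem step, which the paper leaves implicit.
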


\begin{proof}  (i) implies (ii) and (iii): Let $\CM$ be a countable  $\omega$-saturated
model of $T$ and let $\CK=\CM^{[0,1]}$.
Then $(\CK,\CL)$ is strongly separable, and hence is separable by Corollary \ref{c-separable}.
By Lemma \ref{l-omega-sat}, $(\CK,\CL)$ is $\omega$-saturated, so (ii) holds.  It follows that
 every separable model of
$T^R$ is elementarily pre-embeddable in the strongly separable pre-model $(\CK,\CL)$,
and hence is itself strongly separable.  This proves (iii).


(ii) implies (i):    Assume that (i) fails.  Then for some $n$, $S_n(T)$ is uncountable.
For each $q\in S_n(T)$ there is an $n$-type $\rq'$ in $S_n(T^R)$
such that $\nu_{\rq'}(\{q\})=1$.   Let $(\CK,\CB)$ be an $\omega$-saturated
model of $T^R$.
Each  $n$-type $\rq'$ is realized by an $n$-tuple $\overline{\rf}_q$ in $(\CK,\CB)$.
Suppose $p,q\in S_n(T)$ and $p\ne q$.  Then $\nu_{\rp'}(\{q\})=0$ and $\nu_{\rq'}(\{q\})=1$.
By the First Representation Theorem \ref{firstrepresentation}, $(\CK,\CB)$ is represented by a
randomization of a model of $T$, so we may assume that $(\CK,\CB)$ is already a randomization
of a model of $T$.  It follows that
$\overline{\rf}_p\ne \overline{\rf}_q$ almost everywhere, and
$\mu\llbracket \overline{\rf}_p\ne\overline{\rf}_q\rrbracket=1$.
Let $d_n$ be the metric on $\CK^n$ formed by adding the $d_\BK$ distances at each coordinate.
Then
$$d_n(\overline{\rf}_p,\overline{\rf}_q)\ge
\mu\llbracket \overline{\rf}_p\ne\overline{\rf}_q\rrbracket=1.$$
  Since $S_n(T)$ is uncountable, we see
that the metric space $(\CK^n,d_n)$ is not separable.  Therefore
$(\CK,d_\BK)$ is not separable,  so $(\CK,\CB)$ is not
separable and (ii) fails.

(iii) implies (i).  Again assume (i) fails and $S_n(T)$ is
uncountable.  Then by enumerating the formulas of $L$ one can construct a
measure $\nu\in\FR(S_n(T))$ such that
$\nu(\{q\})=0$ for each $q\in S_n(T)$.  By Corollary \ref{c-types} we may take $\rp\in S_n(T^R)$
such that $\nu_\rp=\nu$.  By Lemma \ref{l-strongsep},
$\rp$ cannot be realized in a strongly separable model, but $\rp$ can be realized in a
separable model.  Therefore (iii) fails.
\end{proof}
\medskip

\section{$\omega$-Stable Theories}

\textit{In this section we continue to assume that the signature $L$ of $T$ is countable.}
\medskip

As explained in Section \ref{s-rand}, when considering types in $T^R$, we may confine our attention
to types of sort $\BK$ over parameters of sort $\BK$.
For each model $(\CK,\CB)$ of $T^R$ and set of parameters $\rA\subseteq\CK$, the $d$ metric on the type space
$S_1(T^R(\rA))$ is defined by
$$ d(\rp,\rq)=\inf\{d_\BK(\rf,\rg)\colon tp(\rf/\rA)=\rp \mbox{ and }
tp(\rg/\rA)=\rq \mbox{ in some } (\CK',\CB')\succ (\CK,\CB)\}.$$
We say that the space $S_1(T^R(\rA))$ is \textbf{separable} if it is separable with
respect to the $d$ metric.
Following \cite{BenYaacov-Berenstein-Henson-Usvyatsov:NewtonMS},  we say that $T^R$ is \textbf{$\omega$-stable} if $S_1(T^R(\rA))$ is separable
for every model $(\CK,\CB)$ of $T^R$ and countable set $\rA$ of parameters in $\CK$.

We show that if $T$ has at most countably many complete extensions,
then the randomization operation preserves $\omega$-stability.
We first take up the case that $T$ is complete.

\subsection{Complete Theories}

\begin{thm}  \label{omega-stable}  A complete theory
$T$ is $\omega$-stable if and only if $T^R$ is $\omega$-stable.
\end{thm}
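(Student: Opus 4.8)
The plan is to analyse the type spaces $S_1(T^R(\rA))$ through the measure picture of Corollary \ref{c-types}. A realization $\rf$ of a type $\rp\in S_1(T^R(\rA))$ induces, at each point $w$ of the underlying space, the first order type $\tau_\rp(w)$ of the value $\rf(w)$ over the values of the parameters there, and $\rp$ is determined by the distribution of this ``random type''. The key preliminary fact I would establish is that the $d$-metric is the associated transportation metric: since $\rf(w)=\rg(w)$ forces $\tau_\rp(w)=\tau_\rq(w)$, every pair of realizations satisfies $d_\BK(\rf,\rg)\ge\mu\{w:\tau_\rp(w)\neq\tau_\rq(w)\}$, while an optimal coupling realized in a sufficiently saturated extension (matching equal fiber-types and then equal elements, as in the constructions of Lemma \ref{l-omega-sat}) attains this infimum.

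For the direction $T^R$ $\omega$-stable $\Rightarrow$ $T$ $\omega$-stable I would argue contrapositively. If $T$ is not $\omega$-stable, fix a countable $A$ with $S_1(T(A))$ uncountable and let $\rA$ be the countable set of deterministic elements coming from $A$. Each $q\in S_1(T(A))$, realized by a constant function, gives a type $\rp_q\in S_1(T^R(\rA))$ whose random type is constantly $q$. If $p\neq q$ then the random types differ almost everywhere, so any realizations differ almost everywhere and $d(\rp_p,\rp_q)=1$. This yields uncountably many points pairwise at distance $1$, so $S_1(T^R(\rA))$ is non-separable and $T^R$ is not $\omega$-stable.

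For the harder direction $T$ $\omega$-stable $\Rightarrow$ $T^R$ $\omega$-stable, the real work is a reduction making the fibers uniformly countable. Given a countable $\rA$ in a model of $T^R$, I would first pass to a separable elementary submodel containing $\rA$ (types over $\rA$ are computed identically there), and then invoke Theorem \ref{t-omega-sat}: since $T$ is $\omega$-stable it has a countable $\omega$-saturated model, so condition (iii) holds and every separable model of $T^R$ is strongly separable; hence $\rA$ may be realized inside $(\CM^{[0,1]},\CL)$ for a countable $\CM\models T$. Using that the restriction map $S_1(T^R(\rA'))\to S_1(T^R(\rA))$ is $1$-Lipschitz and surjective, I may enlarge $\rA$ to the still countable set $\rA'$ obtained by adjoining all (countably many) deterministic elements of $\CM$. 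Then at every point $w$ the parameter values exhaust $\CM$, so the fiber is the \emph{fixed} countable set $S^{*}=S_1(T(\CM))$, countable because $\CM$ is countable and $T$ is $\omega$-stable.

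With the fibers pinned to $S^{*}$, a type $\rp\in S_1(T^R(\rA'))$ is determined by the masses $m^{\rp}_{s}=\lambda\{w:\tau_\rp(w)=s\}$ for $s\in S^{*}$, a probability vector on the countable index set $S^{*}$. The transportation formula collapses fiberwise to total variation, giving $d(\rp,\rq)=\tfrac12\sum_{s\in S^{*}}|m^{\rp}_{s}-m^{\rq}_{s}|$, so $\rp\mapsto(m^{\rp}_{s})_{s}$ is an isometric embedding (up to the factor $\tfrac12$) of $S_1(T^R(\rA'))$ into $\ell^{1}(S^{*})$. A subset of $\ell^{1}$ over a countable set is separable, whence $S_1(T^R(\rA'))$, and therefore $S_1(T^R(\rA))$, is separable. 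The step I expect to be the main obstacle is exactly this reduction to a countable value model together with the adjunction of the deterministic elements: without pinning the fibers to a single countable type space, the random types range over the uncountable family $\{S_1(T(A_w))\}_w$ and the clean $\ell^{1}$ description—and hence the separability estimate—is unavailable.
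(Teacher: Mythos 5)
Your easy direction and your initial reduction are fine (and the reduction is a nice alternative to the paper's use of an $\omega$-saturated pair $(\CK_1,\CB_1)\prec(\CK_2,\CB_2)$): passing to a separable elementary submodel, invoking Theorem \ref{t-omega-sat}(iii) to land in $(\CM^{[0,1]},\CL)$ with $\CM$ countable, and adjoining the deterministic elements of $\CM$ via the $1$-Lipschitz surjective restriction map are all legitimate. The genuine gap is the central claim of your hard direction: that a type $\rp\in S_1(T^R(\rA'))$ is determined by the unconditional masses $m^\rp_s=\lambda\{w:\tau_\rp(w)=s\}$, with $d(\rp,\rq)=\tfrac12\sum_s|m^\rp_s-m^\rq_s|$. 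Pinning the fiberwise type space to $S^*=S_1(T(\CM))$ does \emph{not} make the original parameters in $\rA$ deterministic, and a type over $\rA'$ must record the \emph{joint} distribution of the random type with the configuration of $\rA$, not merely its marginal on $S^*$. Concretely, let $T$ be the ($\omega$-stable) theory of an infinite set with equality, $\CM$ countable, $a\neq b$ in $\CM$, and $\rg\in\rA$ with $\rg=a$ on $[0,\tfrac12)$ and $\rg=b$ on $[\tfrac12,1]$. Let $p_1$ be the type $x=a$ and $p_2$ the unique nonrealized type over $\CM$; let $\rp$ place type $p_1$ on $\llbracket\rg=a\rrbracket$ and $p_2$ on $\llbracket\rg=b\rrbracket$, and let $\rq$ be the swap. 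Then $m^\rp=m^\rq=(\tfrac12,\tfrac12)$, yet $(\mu\llbracket x=\rg\rrbracket)^\rp=\tfrac12$ while $(\mu\llbracket x=\rg\rrbracket)^\rq=0$, and in fact $d(\rp,\rq)=1$. So your map into $\ell^1(S^*)$ is neither injective nor isometric, and separability does not follow from it.

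The repair requires exactly the conditional structure your argument discards: replace $m^\rp_s$ by the Radon--Nikodym densities $f^\rp_s\in L^1([0,1],\lambda)$ of the measures $\sB\mapsto\nu^\rp_s(\sB)$ (``type $=s$ and base event $\sB$'') with respect to $\lambda$, mapping $S_1(T^R(\rA'))$ into the still-separable space $L^1\bigl([0,1],\lambda;\ell^1(S^*)\bigr)$ --- these densities are precisely the functions $f_n$ appearing in the paper's proof. Even then you would still owe the transportation \emph{upper} bound $d(\rp,\rq)\leq$ fiberwise total variation (your ``optimal coupling attains the infimum'' step, which you only gesture at); it needs an actual construction of realizations agreeing wherever the coupled fiber types agree, and this is where the real work lies. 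The paper packages that work differently: it realizes every type over $\CK_1$ (a randomization of a countable saturated model along the first coordinate of the unit square) inside the separable model $(\CK_2,\CB_2)$ on the square, using the densities $f_n$ and the second coordinate to build the witness $\rf(r,s)$, and deduces separability of the type space from separability of $\CK_2$ rather than from an isometric embedding. A minor further point: in your easy direction, realizations live in representations carrying only \emph{finitely additive} measures, so you should argue with a single formula separating $p$ from $q$ (as the paper does) rather than with the almost-everywhere statement ``$\tau_\rp(w)=p$ a.e.'', which uses $\sigma$-additivity; this is harmless but worth fixing.
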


\begin{proof}  Assume $T^R$ is $\omega$-stable. Let $\CM$ be an arbitrary countable
model of $T$, let $\CK=\CM^{[0,1]}$, and let $\rA$ be the set of deterministic
elements of $\CK$.  Then $\rA$ is countable, so $S_1(T^R(\rA))$ is separable.

For each $q\in S_1(T(\CM))$ let $\rq'$ be the type in $S_1(T^R(\rA))$
such that for each first order formula $\varphi(x,\overline{b})$ with parameters
in $\CM$,
$$(\mu\llbracket\varphi(x,\overline{b})\rrbracket)^{\rq'}=
\begin{cases}
1 & \mbox{ if }  \varphi(x,\overline{b})\in q\\
0 & \mbox{otherwise}
\end{cases}
$$
Then $q\mapsto \rq'$
is a mapping from $S_1(T(\CM))$ into $S_1(T^R(\rA))$ such that $p\ne q$ implies $d(\rp',\rq')=1$.
 Since $S_1(T^R(\rA))$ is separable,
it follows that  $S_1(T(\CM))$ is countable, so $T$ is $\omega$-stable.

Now assume that $T$ is $\omega$-stable.  Then
$T$ has a countable saturated model $\CM_1$,
and $\CM_1$ has a  countable elementary extension  $\CM_2$ which realizes every
type in $S_1(T(\CM_1))$.  Let
$([0,1]\times[0,1],\CL\otimes\CL,\lambda\otimes\lambda)$ be
the natural Borel measure on the unit square.
Let $\CB_1$ be the subalgebra of $\CL\otimes\CL$ generated by $\{\sB\times [0,1]:\sB\in\CL\}$,
and let $\CB_2=\CL\otimes\CL$.
Let $\CK_1$ and $\CK_2$ be the sets of $\CB_1$-countable random elements of $\CM_1$ and
$\CB_2$-countable random elements of $\CM_2$ respectively.
Then $(\CK_1,\CB_1)$ and $(\CK_2,\CB_2)$ are full randomizations of $\CM_1$
and $\CM_2$ respectively.  By Theorems \ref{qe} and \ref{complete},  $(\CK_1,\CB_1)$
is an elementary pre-substructure of $(\CK_2,\CB_2)$. By Lemma \ref{l-omega-sat},
$(\CK_1,\CB_1)$ is $\omega$-saturated.   By Lemma \ref{simple},  $\CK_2$ has a countable
dense subset.  It therefore suffices to show that every type in $S_1(T^R(\CK_1))$
is realized in $(\CK_2,\CB_2)$.

Let $\rp$ be a type in $S_1(T^R(\CK_1))$.
Let $\{q_0,q_1,\ldots\}$ be an enumeration of the countable set
of types $S_1(T(\CM_1))$, and let $a_n$ realize $q_n$ in $\CM_2$.
It follows from the axioms of $T^R$ that for each $n$, there is a unique
($\sigma$-additive) measure $\nu_n$ on $\CL$ defined by
$$ \nu_n(\sB)=\inf\{(\mu[\llbracket\varphi(x,\overline{\rg})\rrbracket\sqcap \sB])^p:\overline{\rg}
\mbox{ deterministic}, \varphi(x,\overline{\rg})\in q_n\}.$$
Then $\nu_n $ is absolutely continuous with respect to $\lambda$, and
$\nu(\sB)=\sum_n\nu_n(\sB)$ is a probability measure on $\CL$.
By the Radon-Nikodym theorem, there is an $\CL$-measurable
function $f_n:[0,1]\to [0,1]$ such that $\nu_n(\sB)=\int_\sB f_n d\lambda$.
Define $\rf\in\CK_2$ as follows.  For $(r,s)\in[0,1]\times[0,1)$, let
$\rf(r,s)=a_n$ if and only if
$$\sum_{k<n} f_k(r)\leq s < \sum_{k\leq n} f_k(r).$$
For $r\in[0,1]$ let $\rf(r,1)$ be some particular element of $\CM$,
say $\rf(r,1)=a_0$.

Consider a $k$-tuple $\overline{\rg}$ in $\CK_1$.  Let $\CM_1^k=\{\overline{b}_m:m\in\omega\}$.
  For each $r\in[0,1]$ we have $\overline{\rg}(r)\in\{\overline{b}_m:m<\omega\}$.
Let $\sB_m=\{r\in[0,1]:\overline{\rg}(r)=\overline{b}_m\}$.
Then $\sB_m\in\CL$.  For each $m$ and $n$ we have
$$\nu_n(\sB_m)=\int_{\sB_m}\, f_n\,d\lambda=
(\lambda\otimes\lambda)\{(r,s):\rf(r,s)=a_n\wedge r\in \sB_m\}.$$
By the definition of $\nu_n$, for each first order formula
$\varphi(x,\overline{b}_m)\in q_n$,
$$\nu_n(\sB_m)\leq(\mu[\llbracket\varphi(x,\overline \rg)\rrbracket\sqcap \sB_m])^p.$$
It follows that $\rf$  realizes $\rp$ in $(\CK_2,\CB_2)$.
\end{proof}

Isaac Goldbring has noted that in the above proof,
$$ \nu_n(\sB)=(\lambda\otimes\lambda)\{(r,s):r\in \sB\wedge \rf(r,s)\models q_n\},$$
and hence
$$ \nu(\sB)=\sum_n\nu_n(\sB)=(\lambda\otimes\lambda)(\sB\times [0,1])=\lambda(\sB).$$
Thus $\nu$ is the usual measure $\lambda$ on $\CL$ and does not depend on $\rp$.

\begin{rmk}  In \cite{BenYaacov:SchroedingersCat} it is shown that the theory of atomless measure algebras
in continuous logic is $\omega$-stable.
In view of Remark \ref{measure-algebra},
in the case that $\CM$ is the trivial two-element structure, Theorem \ref{omega-stable} gives
another proof of that fact.  So Theorem \ref{omega-stable} can be viewed as a generalization
of the result that the theory of atomless measures is $\omega$-stable.
\end{rmk}

\subsection{Incomplete Theories}

Let $S_0(T)$ be the space of complete extensions of $T$.
In this subsection we show that if $S_0(T)$ is countable, then
$T$ is $\omega$-stable if and only if $T^R$ is $\omega$-stable.

We first take a brief detour to
state a generalization of the Second Representation Theorem \ref{representation} which
shows that every model of $T^R$ can be regarded as a
continuous structure whose elements are random variables taking values in
\textit{random} models of $T$.  A \textbf{full randomization} $(\CK,\CB)$ of
an indexed family $\langle \CM(w): w\in\Omega\rangle$
of models of $T$ is defined in the same way as a full randomization of $\CM$
except that $\CK\subseteq\Pi_{w\in\Omega}\CM(w)$.

\begin{thm} \label{T-representation}
(\cite{Keisler:Randomizing}, Proposition 5.6 and Theorem 5.7, restated)  A pre-structure
$(\CK',\CB')$ is a pre-model of $T^R$ with perfect witnesses if and only if
it can be represented by some full randomization $(\CK,\CB)$ of
some indexed family $\langle \CM(w): w\in\Omega\rangle$
of models of $T$.
\end{thm}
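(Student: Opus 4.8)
The plan is to prove the two implications separately, keeping in mind that this is the incomplete-theory analogue of the Second Representation Theorem \ref{representation} (for a single complete theory) and that the only genuinely new feature is that distinct fibers $w$ may satisfy distinct complete extensions of $T$; in the direct construction below each point $w$ will carry its own complete type, hence its own model $\CM(w)$, and the incompleteness of $T$ is absorbed automatically. The backward direction is a routine verification of the axioms, while the forward direction carries the content and mirrors Keisler's fiberwise construction.

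For the backward direction, suppose $(\CK',\CB')$ is represented by a full randomization $(\CK,\CB)$ of a family $\langle \CM(w):w\in\Omega\rangle$ of models of $T$. Since representation means isomorphism of reduced pre-structures, it suffices to check that $(\CK,\CB)$ is itself a pre-model of $T^R$ with perfect witnesses. The Validity, Boolean, Distance, Measure, and Atomless axioms hold for \emph{any} randomization, as they concern only the Boolean/measure structure of $(\Omega,\CB,\mu)$ and the tautological behaviour of $\llbracket\cdot\rrbracket$ under connectives, and refer to $T$ not at all. For the Transfer axioms, fix $\varphi\in T$; since each $\CM(w)\models\varphi$ we get $\llbracket\varphi\rrbracket=\{w:\CM(w)\models\varphi\}=\Omega$, so $\mu\llbracket\varphi\rrbracket=1$ and $\llbracket\varphi\rrbracket\doteq\top$. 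Finally, the perfect witnesses for the Fullness and Event axioms follow from fullness of the randomization exactly as in the single-model case (Proposition \ref{p-perfect}), whose argument never uses completeness of $T$ and goes through once one records that each $\CM(w)$ has at least two elements.

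For the forward direction, let $(\CK',\CB')$ be a pre-model of $T^R$ with perfect witnesses; I would build the representing randomization by a Stone-space construction. Pass to the reduced event algebra $\overline{\CB'}$ and let $\Omega$ be its Stone space of $2$-valued homomorphisms, with $\CB$ the clopen algebra, so that $\CB\cong\overline{\CB'}$ canonically and $\mu$ transports to an atomless finitely additive probability measure on $\CB$. For each $w\in\Omega$ define a first order $L$-structure $\CM(w)$ as the quotient of $\CK'$ by $\rf\sim_w\rg\iff\llbracket\rf=\rg\rrbracket\in w$, interpreting each symbol by $w$-membership of the corresponding event, e.g.\ $R^{\CM(w)}([\rf_1]_w,\ldots,[\rf_n]_w)$ holds iff $w\in\llbracket R(\rf_1,\ldots,\rf_n)\rrbracket$. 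The crux is a \L o\'s-type lemma proved by induction on $\psi$:
$$\CM(w)\models\psi([\bar\rf]_w)\iff w\in\llbracket\psi(\bar\rf)\rrbracket.$$
The connective steps use the Boolean axioms, and the existential step uses perfect Fullness witnesses: given $\bar\rf$ there is $\rh\in\CK'$ with $\llbracket\psi(\rh,\bar\rf)\rrbracket\doteq\llbracket(\exists x\,\psi)(\bar\rf)\rrbracket$, so $w\in\llbracket\exists x\,\psi\rrbracket$ forces an actual witness $[\rh]_w$ in $\CM(w)$. From the lemma and the Transfer axioms each $\CM(w)$ is a model of $T$, and from the Event axiom together with the standing assumption that every model of $T$ has at least two elements one checks each $\CM(w)$ has at least two elements; the map $\rf\mapsto(w\mapsto[\rf]_w)$ into $\prod_w\CM(w)$ then exhibits $(\CK',\CB')$, after reduction, as isomorphic to a randomization $(\CK,\CB)$ of $\langle\CM(w)\rangle$. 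Fullness is immediate: the Event axiom makes $\CB$ full, and the perfect Fullness witnesses make $\CK$ full in the product.

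The main obstacle is the \L o\'s-type lemma, and within it the existential step, which is exactly where perfect witnesses are indispensable — approximate witnesses would give agreement only up to null sets and would not pin down membership at the individual point $w$. Secondary care is needed in two places: confirming that the transported $\mu$ is genuinely an atomless finitely additive probability measure, so that $(\Omega,\CB,\mu)$ qualifies as a randomization base, and checking the two-element condition on each fiber $\CM(w)$, which is handled exactly as in the proof of Theorem \ref{t-perfect}. Once the lemma is established, matching the measures (the value $\mu\llbracket\psi(\bar\rf)\rrbracket$ computed in $(\CK,\CB)$ agrees with its value in $(\CK',\CB')$) and confirming the isomorphism of reduced pre-structures are formal.
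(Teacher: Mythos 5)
Your proposal is correct and follows essentially the route of the cited source: the paper gives no proof of Theorem \ref{T-representation} beyond citing \cite{Keisler:Randomizing} (Proposition 5.6 and Theorem 5.7), and that argument is precisely your fiberwise construction --- Stone space of the reduced event algebra, quotient structures $\CM(w)$ by $\rf \sim_w \rg \iff \llbracket \rf = \rg \rrbracket \in w$, and a {\L}o\'s-type lemma whose existential step is exactly where perfect witnesses are used. Two small touch-ups: function and constant symbols of $L$ must be interpreted in $\CM(w)$ via their graphs (perfect Fullness witnesses for $\exists x\,(x = F(\bar y))$ give totality, Validity axioms give uniqueness), and the two-element condition on each fiber is immediate from the {\L}o\'s lemma together with the standing assumption that every model of $T$ has at least two elements, since $\CM(w) \models T$ by the Transfer axioms --- no appeal to the argument of Theorem \ref{t-perfect} is needed there.
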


We now introduce countable convex combinations of pre-complete models of $T^R$.
This construction will be used in proving that the randomization operation
preserves $\omega$-stability.
Let $S$ be a countable set, and let $(S,\CB_0,\mu_0)$ be a probability space
where $\CB_0$ is the power set of $S$.  For each $w\in S$ let $(\CK(w),\CB(w))$
be a pre-complete model of $T^R$.  We then define
$\int_S (\CK(w),\CB(w))\, d\mu_0(w)$ to be the pre-structure $(\CK,\CB)$
such that $\CK=\Pi_w \CK(w)$, $\CB$ is the set of events with parameters in $\CK$,
and for each tuple $\overline{\rf}$ in $\CK$ and first order formula $\varphi(\overline{x})$,
$$ \mu\llbracket\varphi(\overline{\rf})\rrbracket =
\int_S \mu(w)\llbracket\varphi(\overline{\rf}(w))\rrbracket\,d\mu_0(w).$$

The proof of the following lemma is routine.

\begin{lem} \label{convex}
Let $S, \mu_0$, and $(\CK(w),\CB(w))$ be as above, and let
$$(\CK,\CB)=\int_S (\CK(w),\CB(w))\, d\mu_0(w).$$

(i)  $(\CK,\CB)$ is a pre-complete model of $T^R$.

(ii)  If each $(\CK(w),\CB(w))$ is separable, then $(\CK,\CB)$ is separable.

(iii) If each $(\CK(w),\CB(w))$ is $\omega$-saturated, then $(\CK,\CB)$ is $\omega$-saturated.

(iv)  If $S_1(T^R(\CK(w)))$ is separable for each $w\in S$, then
$S_1(T^R(\CK))$ is separable.
\end{lem}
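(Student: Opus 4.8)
The statement is Lemma \ref{convex}, which asserts four preservation properties of the countable convex combination $(\CK,\CB)=\int_S(\CK(w),\CB(w))\,d\mu_0(w)$. Since the authors call the proof ``routine,'' the strategy is to verify each clause by unwinding definitions and pushing the relevant property through the integral/product structure, using quantifier elimination (Theorem \ref{qe}) and the perfect-witness theorem (Theorem \ref{t-perfect}) to reduce statements about arbitrary formulas to statements about the measure predicate $\mu\llbracket\varphi\rrbracket$ and the metrics $d_\BK,d_\BB$. I would treat the four clauses in the given order, since each later one leans on the earlier ones.

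\emph{Clause (i), that $(\CK,\CB)$ is a pre-complete model of $T^R$.} The plan is first to check the axioms of $T^R$ coordinatewise. Because $\CK=\Pi_w\CK(w)$ and the defining formula for $\mu\llbracket\varphi(\overline\rf)\rrbracket$ integrates the coordinatewise values, each Boolean, validity, transfer and measure axiom holds in $(\CK,\CB)$ precisely because it holds in every $(\CK(w),\CB(w))$ and the integral is linear and monotone. The Fullness and Event axioms require witnesses: given a tuple $\overline\rg\in\CK$, I would produce a witness $\rf\in\CK$ by choosing, for each $w$, a perfect witness $\rf(w)\in\CK(w)$ (available since each factor is pre-complete, hence has perfect witnesses by Theorem \ref{t-perfect}), and then observe that the integral identity forces $\llbracket\varphi(\rf,\overline\rg)\rrbracket\doteq\llbracket(\exists x\,\varphi)(\overline\rg)\rrbracket$ in $(\CK,\CB)$. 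Pre-completeness of $(\CK,\CB)$ follows from pre-completeness of each factor together with the fact that a $d_\BK$-Cauchy sequence in $\Pi_w\CK(w)$ can be assembled from coordinatewise limits, using that $\sum_w\mu_0(w)=1$ to control the tail.

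\emph{Clauses (ii)--(iv), the separability and saturation statements.} For (ii), given countable dense subsets $D_w\subseteq\CK(w)$, I would show that the set of $\rf\in\CK$ that are supported on a finite subset $F\subseteq S$, take values in $D_w$ on each $w\in F$, and are constant off $F$, is countable and $d_\BK$-dense; density uses that $\mu_0$ has vanishing tails so the contribution of coordinates outside a large finite $F$ is negligible. Clause (iii) is handled via Corollary \ref{c-types}, identifying types in $S_n(T^R)$ with measures on $S_n(T)$: a type over parameters $\overline\rg\in\CK$ decomposes according to the partition of $S$ by the values $\overline\rg(w)$, and since each factor is $\omega$-saturated one realizes the conditional type on each piece and reassembles. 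Clause (iv) is the subtlest: to show $S_1(T^R(\CK))$ is separable in the $d$-metric on types, I would again use the identification of types with measures (Corollary \ref{c-types}) to reduce $d$-separability of the type space to $d_\BK$-separability of realizations in a saturated extension, and then invoke the factorwise separability of $S_1(T^R(\CK(w)))$ together with the countable-sum structure of $\mu_0$.

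\emph{Main obstacle.} The genuinely delicate point is clause (iv), since $d$-separability of a \emph{type space} is not an immediate coordinatewise consequence: one must control the $d$-metric on $S_1(T^R(\CK))$, which quantifies over elementary extensions, rather than merely the $d_\BK$-metric on a fixed model. The plan is to translate, via Corollary \ref{c-types}, a type over $\CK$ into a family of fiber measures indexed by $w$ (the conditional distributions on each $\CM(w)$-like piece), show the $d$-distance between two such types is controlled by a $\mu_0$-weighted sum of the per-fiber $d$-distances in $S_1(T^R(\CK(w)))$, and then extract a countable dense family by diagonalizing over finitely-supported choices of dense-in-each-fiber types — the tails again being absorbed by $\sum_w\mu_0(w)=1$. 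Making the estimate relating the global $d$-metric to the weighted sum of fiber metrics rigorous is where the real work lies; everything else is bookkeeping driven by the integral formula defining $\mu\llbracket\cdot\rrbracket$.
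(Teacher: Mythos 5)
The paper gives no proof of Lemma \ref{convex} at all --- it is declared routine --- so your proposal can only be measured against the argument the authors evidently have in mind, and your plan is essentially that argument: verify the axioms of $T^R$ coordinatewise, using perfect witnesses in each factor (Theorem \ref{t-perfect}) for the Fullness and Event axioms; get pre-completeness and separability by tail estimates using $\sum_w \mu_0(w)=1$; and handle (iii) and (iv) by decomposing a type into per-coordinate types, realizing or approximating fiberwise, and reassembling. Clauses (i) and (ii) are fine as sketched (in (ii), read ``constant off $F$'' as ``agrees with one fixed choice function off $F$,'' since the coordinates live in different structures; and in (iii), the decomposition is indexed by the coordinates $w\in S$, not by a ``partition of $S$ by the values $\overline{\rg}(w)$'').

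The one step you treat as given, but which carries the real content of (iii) and (iv), is the existence of the fiber decomposition itself. You speak of ``the conditional distributions on each piece,'' but the coordinate event for $w\in S$ need not belong to $\CB$ (take all factors $(\CK(w),\CB(w))$ identical: no event with parameters in $\CK$ separates the coordinates), so these ``conditional types'' are not conditionings on internal events and must be constructed by hand. For (iii) this is a Radon--Nikodym argument: if $\nu_\rp$ is the measure on $S_{x\overline{y}}(T)$ attached (via Corollary \ref{c-types}) to a type over a finite tuple $\overline{\rg}$, its marginal is $\sum_w \mu_0(w)\nu_w$ where $\nu_w$ is the distribution of $\overline{\rg}(w)$; letting $h_w$ be the density of $\mu_0(w)\nu_w$ with respect to that marginal and giving $\nu^{(w)}$ the density $(h_w\circ\pi)/\mu_0(w)$ against $\nu_\rp$ (with $\pi$ the restriction to the $\overline{y}$-coordinates), one gets $\nu^{(w)}$ with marginal $\nu_w$ and $\sum_w \mu_0(w)\nu^{(w)}=\nu_\rp$, and $\omega$-saturation of the factors then realizes each piece. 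For (iv) you need more: a single coherent family $(\rp_w)_w$ with $\rp_w\in S_1(T^R(\CK(w)))$ inducing a given $\rp\in S_1(T^R(\CK))$, and the finite-tuple decompositions just described are not automatically projective as the tuple grows. The standard repair is compactness: in $\prod_w S_1(T^R(\CK(w)))$ with the logic topologies, each condition ``the $\mu_0$-weighted sum of the fiber joint distributions over $\overline{\rg}$ equals $\nu_{\rp,\overline{\rg}}$'' is closed (the weighted sum of continuous $[0,1]$-valued maps converges uniformly), and any finite set of such conditions is satisfiable by the Radon--Nikodym step applied to the union tuple. Once this surjectivity of $(\rp_w)_w\mapsto\rp$ is in hand, your estimate $d(\rp,\rq)\le\sum_w\mu_0(w)\,d(\rp_w,\rq_w)$ does hold, but it quietly uses one more fact worth stating: by quantifier elimination (Theorem \ref{qe}), a convex combination of elementary extensions of the factors is an elementary extension of $(\CK,\CB)$, so fiberwise-close realizations can be placed in a single elementary extension of $(\CK,\CB)$. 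With these points supplied, your finite-support diagonalization over countable dense sets of fiber types closes the proof; nothing in your plan is wrong, but the decomposition step is the crux rather than mere bookkeeping.
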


\begin{thm}  \label{T-omega-stable}
Suppose that $S_0(T)$ is countable.  Then
$T$ is $\omega$-stable if and only if $T^R$ is $\omega$-stable.
\end{thm}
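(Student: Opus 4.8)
The plan is to reduce both directions to the complete case handled in Theorem~\ref{omega-stable}, using the hypothesis that $S_0(T)$ is countable to decompose models of $T^R$ along the (countable) partition of their sample space into pieces on which a fixed completion of $T$ holds. Write $S_0(T)=\{T_i : i<\kappa\}$ with $\kappa\le\omega$. Recall that a first order theory is $\omega$-stable exactly when each of its completions is $\omega$-stable; thus it suffices to relate, for each $i$, the $\omega$-stability of the complete theory $T_i$ to that of $T^R$, passing through $T_i^R$ via Theorem~\ref{omega-stable}. Throughout I use that $T_i^R\supseteq T^R$ as continuous theories (the transfer axioms of $T^R$ are among those of $T_i^R$), so every model of $T_i^R$ is a model of $T^R$.

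For the direction from $T^R$ to $T$, suppose $T^R$ is $\omega$-stable and fix $i$. Given a model $(\CK,\CB)$ of $T_i^R$ and a countable $\rA\subseteq\CK$, the structure $(\CK,\CB)$ is also a model of $T^R$, so $S_1(T^R(\rA))$ is separable. Since a $T_i^R$-type over $\rA$ is in particular a $T^R$-type over $\rA$, and since the elementary extensions of the fixed structure $(\CK,\CB)$ do not depend on which of the two theories we name it a model of, the space $S_1(T_i^R(\rA))$ sits inside $S_1(T^R(\rA))$ isometrically for the $d$-metric; a subspace of a separable metric space is separable, so $T_i^R$ is $\omega$-stable. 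By Theorem~\ref{omega-stable} each $T_i$ is $\omega$-stable, hence $T$ is $\omega$-stable.

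For the converse, suppose $T$ is $\omega$-stable, so each $T_i$ is, whence each $T_i^R$ is $\omega$-stable by Theorem~\ref{omega-stable}. To show $T^R$ is $\omega$-stable it suffices, after passing to a separable elementary submodel containing the parameters via downward L\"owenheim--Skolem, to prove that $S_1(T^R(\CK_0))$ is separable for every separable model $(\CK_0,\CB_0)$ of $T^R$. By Theorem~\ref{t-perfect} such a model has perfect witnesses, so by Theorem~\ref{T-representation} we may take it to be a full randomization of a family $\langle\CM(w):w\in\Omega\rangle$ of models of $T$. For each $i$ put $\Omega_i=\bigsqcap_{\varphi\in T_i}\llbracket\varphi\rrbracket$, the event that the fibre satisfies $T_i$; since $L$ is countable each $T_i$ is a countable set of sentences and $(\CK_0,\CB_0)$ is pre-complete, so $\Omega_i$ is an event, and the $\Omega_i$ form a countable measurable partition of $\Omega$ with $\sum_i\mu(\Omega_i)=1$. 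Conditioning the randomization on each $\Omega_i$ of positive measure yields a separable pre-complete model $(\CK(i),\CB(i))$ of $T_i^R$, and gluing identifies $(\CK_0,\CB_0)$ with the convex combination $\int_{S_0(T)}(\CK(i),\CB(i))\,d\mu_0(i)$, where $\mu_0(i)=\mu(\Omega_i)$. Because $T_i^R$ is $\omega$-stable and $\CK(i)$ is separable, $S_1(T^R(\CK(i)))=S_1(T_i^R(\CK(i)))$ is separable, a type over a separable parameter set being determined by its restriction to a countable dense subset. Lemma~\ref{convex}(iv) then gives that $S_1(T^R(\CK_0))$ is separable, as required.

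The main obstacle is the decomposition in the converse: one must verify that partitioning the sample space according to which completion of $T$ the fibre satisfies produces a genuine countable measurable partition inside the measure algebra, which is where the countability of $S_0(T)$, the countability of $L$, and pre-completeness are all used, and that conditioning on each piece returns a model of the corresponding $T_i^R$ that is again separable. A secondary, routine point is the passage from $\omega$-stability of $T_i^R$ (separability of type spaces over countable parameter sets) to separability of $S_1(T_i^R(\CK(i)))$ over the whole separable universe $\CK(i)$, which follows from the uniform continuity of the predicates of $L^R$.
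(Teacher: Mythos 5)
Your proof is correct, but it takes a genuinely different route from the paper's, most visibly in the right-to-left direction. The paper never decomposes the given separable model $(\CK,\CB)$ at all: it builds, from countable $\omega$-saturated models of each completion $w\in S_0(T)$, a \emph{fresh} convex combination $(\CK_1,\CB_1)=\int_S(\CK_1(w),\CB_1(w))\,d\mu_0(w)$, with $\mu_0$ read off from the values $(\CK,\CB)$ assigns to $\mu\llbracket\varphi\rrbracket$ for sentences $\varphi$ of $L$; by quantifier elimination (Theorem~\ref{qe}) this combination is elementarily equivalent to $(\CK,\CB)$, it is separable and $\omega$-saturated by Lemma~\ref{convex} and Lemma~\ref{l-omega-sat}, so the given model pre-embeds elementarily into it and the separability of $S_1(T^R(\CK_1))$ (Lemma~\ref{convex}(iv)) pulls back along the restriction map. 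You instead work ``downward'': represent $(\CK_0,\CB_0)$ over a family of fibre models (via Theorems~\ref{t-perfect} and \ref{T-representation}), carve the sample space into the events $\Omega_i$ on which the fibre theory is $T_i$, condition, and identify the model with the convex combination of the conditioned pieces before applying Lemma~\ref{convex}(iv). This is sound, but it obliges you to verify exactly what the paper's saturation-and-embedding trick is designed to bypass: that the $\Omega_i$ exist as events with $\sum_i\mu(\Omega_i)=1$ (this uses countability of $L$ and of $S_0(T)$ together with $\sigma$-additivity of the completed measure algebra --- equivalently, regularity of the induced Borel measure on the countable compact space $S_0(T)$), that conditioning on each $\Omega_i$ of positive measure yields a pre-complete separable model of $T_i^R$, and that the countable gluing map is onto the completion of $\Pi_i\CK(i)$ --- the last by the fullness-plus-Cauchy-sequence argument the paper itself uses in Theorems~\ref{t-perfect} and \ref{t-omega-cat}, so these verifications, which you correctly flag as the main obstacle, are routine in the paper's toolkit but not free. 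Your forward direction also differs: you route through the standard equivalence that a first order theory is $\omega$-stable iff all its completions are, plus the observation that a model of $T_i^R$ is a model of $T^R$ with the same type spaces and $d$-metric over the same parameters, whereas the paper simply re-runs the deterministic-elements argument of Theorem~\ref{omega-stable} verbatim; both are fine. Finally, both proofs use the fact that for a separable model the type space over the whole model is isometric to the type space over a countable dense subset (by uniform continuity of formulas in their parameters) --- the paper does so silently when it passes from $\omega$-stability of $w^R$ to separability of $S_1(w^R(\CK_1(w)))$, and your making this step explicit is a point in your favor.
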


\begin{proof}  Assume $T^R$ is $\omega$-stable. Let $\CM$ be an arbitrary countable
model of $T$, let $\CK=\CM^{[0,1]}$, and let $\rA$ be the set of deterministic
elements of $\CK$.  Then $\rA$ is countable, so $S_1(T^R(\rA))$ is separable.
  By the argument used in the proof of
Theorem \ref{omega-stable}, it follows that $S_1(T(\CM))$ is countable, so $T$ is $\omega$-stable.

Suppose $T$ is $\omega$-stable.  Let $S=S_0(T)$.
Then each $w\in S$ is a complete $\omega$-stable theory which
has a countable $\omega$-saturated model $\CM_1(w)$.
By Theorem \ref{complete}, each $w^R$ is a complete extension of $T^R$.
By Lemma \ref{l-omega-sat} and Corollaries \ref{c-precomplete} and \ref{c-separable},
$w^R$ has a pre-complete separable  $\omega$-saturated model $(\CK_1(w),\CB_1(w))$.
By Theorem \ref{omega-stable}, each $w^R$ is an $\omega$-stable theory.
Therefore the type space $S_1(w^R(\CK_1(w)))$ is separable.

Let $(\CK,\CB)$ be a separable model of $T^R$.
Let $\mu_0$ be the unique measure in $\FR(S)$ such that for each
sentence $\varphi$ of $L$ and $r\in [0,1]$, if
$$(\CK,\CB)\models \mu\llbracket\varphi\rrbracket \ge r$$
then $\mu_0(\{w\in S:w\models\varphi\})\ge r$.
The pre-models $(\CK,\CB)$ and $(\CK_1,\CB_1)$ assign the same measure to
each sentence $\varphi$ of $L$.  Therefore by quantifier elimination,
$(\CK_1,\CB_1)$ is elementarily equivalent to $(\CK,\CB)$.

By Lemma \ref{convex}, $(\CK_1,\CB_1)=\int_S (\CK_1(w),\CB_1(w))\, d\mu_0(w)$
is an $\omega$-saturated separable pre-complete model of $T$, and the type space
$S_1(T^R(\CK_1))$ is separable. Since $(\CK,\CB)$
is separable, $(\CK,\CB)$ is elementarily pre-embeddable in $(\CK_1,\CB_1)$.
Hence the type space $S_1(T^R(\CK))$ is separable.  This shows that $T^R$
is $\omega$-stable.
\end{proof}

\section{Independence and Stability}

\subsection{Conditional expectations and fiber products of measures}

We have seen earlier that types in the sense of $T^R$ are measures on
the type spaces of $T$.
Here we will give a few results in an abstract measure-theoretic
setting that will be useful later on.

\begin{fct}
  Let $(X,\Sigma_X)$ and $(Y,\Sigma_Y)$ be two measurable spaces,
  $\pi\colon X \to Y$ a measurable
  function and let $\mu$ a probability measure on $X$.
  Let $\hat \pi(\mu) = \mu \circ \pi^{-1}$ denote the
  image measure on $Y$.

  Then for every $f \in L^p(X,\Sigma_X,\mu)$ there is a
  unique (up to equality a.e.)\ function
  $g \in L^p(Y,\Sigma_Y,\hat \pi(\mu))$
  such that:
  \begin{align*}
    & \int_S g\,d\hat \pi(\mu) = \int_{\pi^{-1}(S)} f\, d\mu
    && \forall S \in \Sigma_Y
  \end{align*}
  This unique function will be denoted
  $g = \bE^\mu[f|\pi]$.
  The mapping $\bE^\mu[\cdot|\pi]$ has all the usual properties of conditional
  expectation: it is additive,
  linear in the sense that
  $\bE^\mu[(h\circ\pi)\cdot f|\pi] = h\cdot\bE^\mu[f|\pi]$ for
  $h \in L^q(Y,\Sigma_Y,\hat \pi(\mu))$,
  satisfies Jensen's inequality and so on.
\end{fct}
\begin{proof}
  Identical to the classical proof of the existence of conditional
  expectation.
  Indeed, the classical case is merely the one where the underlying
  sets $X$ and $Y$ are equal and $\pi$ is the identity.
\end{proof}
When the mapping $\pi \colon X \to Y$ is clear from the context we may
write $\bE^\mu[\cdot|Y]$  instead of $\bE^\mu[\cdot|\pi]$.

Let us add assumptions.
Now $X$ and $Y$ are compact Hausdorff topological spaces,
equipped with their respective $\sigma$-algebras of Borel sets,
and $\pi\colon X \to Y$ is continuous (so in particular Borel measurable).
Let $\FR(X)$ denote the set of regular Borel probability measures on
$X$, and similarly $\FR(Y)$ and so on.
Since $\pi$ is continuous the image measure of a regular Borel measure
is regular as well, so we have $\hat \pi \colon \FR(X) \to \FR(Y)$.

\begin{ntn}
  For a continuous function $\varphi\colon X \to \bR$ and
  $\mu \in \FR(X)$ let us define
  \begin{gather*}
    \langle\varphi,\mu\rangle = \int \varphi\,d\mu.
  \end{gather*}
\end{ntn}

We equip $\FR(X)$ with the weak-$*$ topology, namely with the minimal
topology under which the mapping
$\langle\varphi,\cdot\rangle\colon \mu \mapsto \int \varphi\,d\mu$
is continuous for every continuous
$\varphi\colon X \to [0,1]$
(equivalently, for every continuous $\varphi\colon X \to \bR$).
It is a classical fact that this is a compact Hausdorff topology.

\begin{fct}[Riesz Representation Theorem]
  The mapping
  $\mu \mapsto \langle\cdot,\mu\rangle$
  defines a bijection between $\FR(X)$ and the positive
  linear functionals on $C(X,\bR)$ satisfying $\lambda(1) = 1$.
\end{fct}
\begin{proof}
  Rudin, \cite[Theorem~2.14]{Rudin:RealAndComplexAnalysis}.
\end{proof}

Equipping the space of positive functionals with the topology of
point-wise convergence this bijection
is a homeomorphism, whence the compactness
of $\FR(X)$ follows easily.

Let us now consider a ``conditional'' variant of the functional
$\langle\varphi,\cdot\rangle$.
For $\nu \in \FR(Y)$ let $\FR_\nu(X)$ denote the fiber above $\nu$.
For a continuous $\varphi\colon X \to [0,1]$ and
$\mu \in \FR_\nu(X)$ let
$\hat \varphi_\nu(\mu) = \bE^\mu[\varphi|Y]$.

\begin{lem}
  \label{lem:ContinuousConditionalExpectation}
  Let $\varphi\colon X \to [0,1]$ be a continuous function and
  $\nu \in \FR(Y)$.
  Then
  $\hat \varphi_\nu\colon \FR_\nu(X) \to L^1(Y,\nu)$ is
  continuous where $L^1(Y,\nu)$ is equipped with the weak topology.
  In other words, for every $\psi \in L^\infty(Y,\nu)$
  the mapping
  $\mu \mapsto \int_Y \bE^\mu[\varphi|Y]\psi\, d\nu$
  is continuous.
\end{lem}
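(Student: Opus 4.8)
The plan is to prove continuity of the map $\mu \mapsto \int_Y \bE^\mu[\varphi|Y]\psi\,d\nu$ for each fixed $\psi \in L^\infty(Y,\nu)$ by first establishing it in the case where $\psi$ is continuous, and then using a density argument to handle the general bounded measurable case. The key simplification in the continuous case is the defining identity for conditional expectation: because $\psi \circ \pi$ is a legitimate bounded measurable (indeed continuous) function on $X$ and because $\hat\pi(\mu) = \nu$ for all $\mu \in \FR_\nu(X)$, the pullback $\psi \circ \pi$ lets us remove the conditional expectation entirely via the relation
\begin{align*}
  \int_Y \bE^\mu[\varphi|Y]\,\psi\,d\nu
  &= \int_X \varphi \cdot (\psi\circ\pi)\,d\mu.
\end{align*}
This is exactly the ``linearity'' property of $\bE^\mu[\cdot|\pi]$ stated in the Fact above (with $h = \psi$), combined with the fact that integrating a conditional expectation against $\nu = \hat\pi(\mu)$ recovers the integral of the underlying function against $\mu$.

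Once the problem is rewritten this way, continuity in the weak-$*$ topology for \emph{continuous} $\psi$ is essentially immediate. First I would observe that if $\psi \colon Y \to \bR$ is continuous, then $\varphi\cdot(\psi\circ\pi)\colon X \to \bR$ is continuous (as $\pi$ is continuous and $\varphi$ is continuous), so the map $\mu \mapsto \int_X \varphi\cdot(\psi\circ\pi)\,d\mu = \langle \varphi\cdot(\psi\circ\pi),\mu\rangle$ is continuous on all of $\FR(X)$ by the very definition of the weak-$*$ topology, and hence continuous on the subspace $\FR_\nu(X)$. This disposes of the continuous case with no real work.

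The main obstacle will be passing from continuous $\psi$ to arbitrary $\psi \in L^\infty(Y,\nu)$, since a general bounded measurable function is not a pullback of something continuous and the clean identity above no longer applies directly. My plan here is a uniform approximation argument: the family of functionals $\{\mu \mapsto \int_Y \bE^\mu[\varphi|Y]\psi\,d\nu : \mu \in \FR_\nu(X)\}$ is equi-Lipschitz in $\psi$ with respect to the $L^1(Y,\nu)$-norm, because
\begin{align*}
  \left| \int_Y \bE^\mu[\varphi|Y]\,(\psi_1-\psi_2)\,d\nu \right|
  &\leq \|\bE^\mu[\varphi|Y]\|_\infty \cdot \|\psi_1-\psi_2\|_{L^1(Y,\nu)}
  \leq \|\psi_1-\psi_2\|_{L^1(Y,\nu)},
\end{align*}
using that $0 \leq \varphi \leq 1$ forces $0 \leq \bE^\mu[\varphi|Y] \leq 1$. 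Since $C(Y,\bR)$ is dense in $L^1(Y,\nu)$ (as $\nu$ is a regular Borel measure on a compact Hausdorff space) and $L^\infty(Y,\nu) \subseteq L^1(Y,\nu)$, any $\psi \in L^\infty(Y,\nu)$ can be approximated in $L^1(Y,\nu)$-norm by continuous functions. A uniform limit of continuous functions is continuous, so the target functional is continuous for every $\psi \in L^\infty(Y,\nu)$; this yields exactly the asserted weak-topology continuity of $\hat\varphi_\nu \colon \FR_\nu(X) \to L^1(Y,\nu)$. The one point requiring a little care is that the approximation must be uniform over all $\mu \in \FR_\nu(X)$ simultaneously, which is precisely what the equi-Lipschitz bound above guarantees.
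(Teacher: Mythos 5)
Your proof is correct and follows essentially the same route as the paper's: both reduce to the case of continuous $\psi$ via the identity $\int_Y \bE^\mu[\varphi|Y]\,\tilde\psi\,d\nu = \langle \varphi\cdot(\tilde\psi\circ\pi),\mu\rangle$, where weak-$*$ continuity is immediate, and then use regularity of $\nu$ to approximate a general $\psi \in L^\infty(Y,\nu)$ by continuous functions with an error bound uniform in $\mu$. The only difference is cosmetic: the paper approximates $\psi$ in measure and runs a pointwise $5\varepsilon$-estimate at a given $\mu$, while you approximate in $L^1(Y,\nu)$-norm and use the equi-Lipschitz bound coming from $0 \leq \bE^\mu[\varphi|Y] \leq 1$ to conclude by a uniform limit of continuous functionals.
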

\begin{proof}
  Let $\psi \in L^\infty(Y,[0,1])$, and let $\varepsilon > 0$ be
  given.
  Since $\nu$ is regular there is a continuous function
  $\tilde \psi\colon Y \to \bC$ which is close in $\nu$ to $\psi$,
  i.e., such that
  $\nu\bigl\{ y\colon |\psi(y)-\tilde \psi(y)| > \varepsilon \bigr\}
  < \varepsilon$.
  Then for every measurable $\chi\colon Y \to [0,1]$:
  \begin{gather*}
    \left| \int_Y \chi\psi\, d\nu
      - \int_Y \chi\tilde \psi\, d\nu \right|
    < 2\varepsilon.
  \end{gather*}
  The product $\tilde \varphi = \varphi\cdot\tilde\psi\circ\pi$
  is continuous on $X$ so $\mu$ admits a neighborhood
  \begin{gather*}
    U = \bigl\{ \mu' \in \FR_\nu(X) \colon
    |\langle \tilde \varphi,\mu \rangle
    - \langle \tilde \varphi,\mu' \rangle| < \varepsilon
    \big\}.
  \end{gather*}
  We conclude observing that for every $\mu' \in U$
  \begin{align*}
    \left| \int_Y \bE^\mu[\varphi|Y]\psi\, d\nu
      - \int_Y \bE^{\mu'}[\varphi|Y]\psi\, d\nu \right| &
    < \left| \int_Y \bE^\mu[\varphi|Y]\tilde \psi\, d\nu
      - \int_Y \bE^{\mu'}[\varphi|Y]\tilde \psi\, d\nu \right|
    + 4\varepsilon
    \\ &
    = \left| \langle \tilde \varphi,\mu \rangle
      - \langle \tilde \varphi,\mu' \rangle \right|
    + 4\varepsilon
    < 5\varepsilon.
    \qedhere
  \end{align*}
\end{proof}

Sometimes we will find ourselves in a situation where we have some
constraints on a measure and we wish to decide whether there is a
measure $\mu \in \FR(X)$ satisfying these constraints.
This will usually take the following form:

\begin{prp}
  \label{prp:MeasureIneqExtension}
  Let $X$ be a compact Hausdorff space,
  $A \subseteq C(X,\bR)$ any subset and $\lambda_0 \colon A \to \bR$ any mapping.
  Then the following are equivalent:
  \begin{enumerate}
  \item There exists a measure $\mu \in \FR(X)$ satisfying
    $\langle\varphi,\mu\rangle \leq \lambda_0(\varphi)$ for every $\varphi \in A$.
  \item Whenever $\{\varphi_i\}_{i<\ell} \subseteq A$ are such that
    $\sum_{i<\ell} \varphi_i \geq n$ we also have $\sum_{i<\ell} \lambda_0(\varphi_i) \geq n$.
  \end{enumerate}
\end{prp}
\begin{proof}
  One direction is clear so we prove the other.
  Let $S$ denote the set of all partial functions $\lambda$ which satisfy
  the condition in the second item, noticing that it is equivalent to:
  \begin{gather*}
    \sum_{i<\ell} \alpha_i\lambda(\varphi_i) \geq \inf_{x\in X} \sum_{i<\ell}
     \alpha_i\varphi_i(x) \qquad \text{for all }
    \{(\alpha_i,\varphi_i)\}_{i<\ell} \subseteq \bR^+ \times dom(\lambda).
  \end{gather*}
  For $\lambda \in S$, $\psi \in C(X,\bR)$ and $\beta \in \bR$ define $\lambda_{\psi,\beta}$ be the
  partial functional which coincides with $\lambda$ except at $\psi$ and
  satisfying $\lambda_{\psi,\beta}(\psi) = \beta$
  (so $dom(\lambda_{\psi,\beta}) = dom(\lambda) \cup \{\psi\}$).

  Given $\lambda \in S$ and $\psi \in C(X,\bR)$ one can always find $\beta$
  such that $\lambda_{\psi,\beta} \in S$ as well.
  For example, $\beta = \sup \psi$ will
  always do.
  The least such $\beta$ is always given by:
  \begin{gather*}
    \tilde \lambda(\psi) = \sup \left\{
      \inf \left( \psi + \sum_{i<\ell} \alpha_i\varphi_i \right) - \sum_{i<\ell}
      \alpha_i\lambda(\varphi_i)\colon
      \{(\alpha_i,\varphi_i)\}_{i<\ell} \subseteq \bR^+ \times dom(\lambda)
    \right\}.
  \end{gather*}
  In particular if $\psi \in dom(\lambda)$ then $\tilde \lambda(\psi) \leq \lambda(\psi)$.
  It is also immediate to check that:
  \begin{align*}
    & \tilde \lambda(\alpha) = \alpha & \alpha \in \bR \\
    & \tilde \lambda(\alpha\psi) = \alpha\tilde \lambda(\psi) & \alpha \in \bR^+ \\
    & \tilde \lambda(\psi + \psi') \geq \tilde \lambda(\psi) + \tilde \lambda(\psi') \\
    & \tilde \lambda(\psi + \psi') \leq \lambda(\psi) + \lambda(\psi') & \psi,\psi' \in dom(\lambda)
  \end{align*}
  For $\lambda,\lambda' \in S$ say that $\lambda \preceq \lambda'$ if $dom(\lambda) \subseteq dom(\lambda')$ and
  $\lambda(\varphi) \geq \lambda'(\varphi)$ for every $\varphi \in dom(\lambda)$.
  By Zorn's Lemma $(S,\preceq)$ admits a maximal element $\lambda \succeq \lambda_0$.
  By its maximality $\lambda$ is total and satisfies
  $\tilde \lambda(\varphi) = \lambda(\varphi)$.

  It follows that $\lambda(\varphi+\psi) = \lambda(\varphi) + \lambda(\psi)$ and that
  $\lambda(\alpha\varphi) = \alpha\lambda(\varphi)$, first for $\alpha \geq 0$ and then for every
  $\alpha \in \bR$.
  Therefore $\lambda$ is a linear functional, and it is positive as it
  belongs to $S$.
  Since in addition $\lambda(1) = 1$,
  it is necessarily of the form
  $\lambda =
  \langle\cdot,\mu\rangle \colon
  \varphi \mapsto \langle\varphi,\mu\rangle$
  for some measure
  $\mu \in \FR(X)$.
  Thus $\langle\cdot,\mu\rangle \succeq \lambda_0$, so $\mu$ is as desired.
\end{proof}

One can also prove a conditional variant of
\fref{prp:MeasureIneqExtension} but such a result will not be required
here.
Instead, let us re-phrase \fref{prp:MeasureIneqExtension} a little:

\begin{prp}
  \label{prp:MeasureEqExtension}
  Let $X$ be a compact Hausdorff space,
  $A \subseteq C(X,\bR)$ any subset and $\lambda_0 \colon A \to \bR$ any mapping sending
  $1 \mapsto 1$.
  Then the following are equivalent:
  \begin{enumerate}
  \item There exists a measure $\mu \in \FR(X)$ satisfying
    $\langle\mu,\varphi\rangle = \lambda_0(\varphi)$ for every $\varphi \in A$.
  \item Whenever $\{(\varphi_i,m_i)\}_{i<\ell} \subseteq A \times Z$ are such that
    $\sum_{i<\ell} m_i\varphi_i \geq 0$ we also have $\sum_{i<\ell} m_i\lambda_0(\varphi_i) \geq 0$.
  \end{enumerate}
\end{prp}
\begin{proof}
  One direction is clear.
  For the other observe that if $\varphi,-\varphi \in A$ then
  necessarily $\lambda_0(-\varphi) +\lambda_0(\varphi) = 0$.
  We may therefore define $\lambda_1$ whose domain
  is $A \cup -A$ by $\lambda_1(\varphi) = \lambda_0(\varphi)$,
  $\lambda_1(-\varphi) = -\lambda_0(\varphi)$.
  Then $\lambda_1$ satisfies the conditions of the previous Proposition, and
  the corresponding measure $\mu$ is as desired.
\end{proof}

Let us now consider fiber products of measures.
Again we consider general measurable spaces (i.e., not necessarily
topological) $X = (X,\Sigma_X)$, $Y = (Y,\Sigma_Y)$, $Z = (Z,\Sigma_Z)$,
equipped with measurable mappings $\pi_X \colon X \to Z$, $\pi_Y\colon Y \to Z$.
Let $X \times_Z Y$ denote the set theoretic fiber product:
\begin{gather*}
  X \times_Z Y = \{(x,y) \in X \times Y\colon \pi_X(x) = \pi_Y(y)\}.
\end{gather*}
Let $\Sigma_0 = \{A \times_Z B\colon A \in \Sigma_X, B \in \Sigma_Y\}$ and let $\Sigma_X \otimes_Z \Sigma_Y$
denote the generated
$\sigma$-algebra on $X \times_Z Y$.
The set $X \times_Z Y$ is thereby rendered a measurable space
with the canonical mappings to $X$ and $Y$ measurable (it is the
measurable space fiber product).

Let $\mu$ and $\nu$ be probability measures on $X$ and $Y$, respectively,
with the same image measure on $Z$: $\hat \pi_X(\mu) = \hat \pi_Y(\nu)$.
For  $A \in \Sigma_X$ and $B \in \Sigma_Y$ define:
\begin{align*}
  (\mu \otimes_Z \nu)_0(A \times_Z B)
  & = \int_Z \bP^\mu[A|Z]\bP^\nu[B|Z] \, d\hat \pi_X(\mu) \\
  & = \int_A \bP^\nu[B|Z] \circ \pi_X \, d\mu \\
  & = \int_B \bP^\mu[A|Z] \circ \pi_Y \, d\nu.
\end{align*}
It is not difficult to check that $\Sigma_0$ is a semi-ring and that
$(\mu \otimes_Z \nu)_0$ defined in this manner is a $\sigma$-additive probability
measure on $\Sigma_0$.
By Carathéodory's theorem it extends to a $\sigma$-additive probability
measure on a $\sigma$-algebra containing $\Sigma_0$, and in particular to
$\Sigma_X \otimes_Z \Sigma_Y$.
We will denote this fiber product measure by $\mu \otimes_Z \nu$.
Its image measures on $X$ and $Y$ are $\mu$ and $\nu$, respectively.
If we let $\FM(X)$ denote the collection of probability measures on
$X$ and so on we have obtained a mapping:
\begin{gather*}
  \begin{array}{ccc}
    \FM(X) \times_{\FM(Z)} \FM(Y) & \to & \FM\left( X \times_Z Y \right) \\
    &\\
    (\mu,\nu) & \mapsto & \mu \otimes_Z \nu
  \end{array}
\end{gather*}

Let us switch back to the topological setting, where all spaces are
compact Hausdorff spaces equipped with the $\sigma$-algebras of Borel sets
and the mappings are continuous.
In particular we have $\FR(X) \times_{\FR(Z)} \FR(Y) \subseteq \FM(X) \times_{\FM(Z)} \FM(Y)$.
(Going through the construction one should be able to verify that if
$(\mu,\nu) \in \FR(X) \times_{\FR(Z)} \FR(Y)$ then
$\mu \otimes_Z \nu$ is a regular measure on $X \times_Z Y$ as well, but we will not
need this observation.)
Given a Borel function $\rho\colon X \times_Z Y \to [0,1]$ we may define:
\begin{gather*}
  \begin{array}{rccc}
    \hat \rho\colon & \FR(X) \times_{\FR(Z)} \FR(Y) & \to & [0,1] \\
    &\\
    & (\mu,\nu) & \mapsto & \bE^{\mu \otimes_Z \nu}[\rho]
  \end{array}
\end{gather*}

\begin{lem}
  \label{lem:ContinuityInOneVariable}
  Let $X$, $Y$ and $Z$ be compact Hausdorff spaces and
  $X \times_Z Y$ defined as above.
  Let $\rho\colon X \times_Z Y \to [0,1]$ be a Borel function.

  Let us fix $\mu \in \FR(X)$, letting $\eta = \hat\pi_X(\mu)$,
  and assume we can find a countable family
  of Borel subsets $X_i \subseteq X$ such that
  \begin{enumerate}
  \item $\mu\left( \bigcup X_i \right) = 1$.
  \item For each $i$
    there is a continuous function $\rho_i(y)\colon Y \to [0,1]$ such
    that $\rho(x,y) = \rho_i(y)$ for every
    $(x,y) \in X_i \times_Z Y$.
  \end{enumerate}
  Then $\hat \rho(\mu,\cdot) \colon \FR_\eta(Y) \to [0,1]$ is continuous.
\end{lem}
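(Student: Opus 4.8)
The plan is to rewrite the integral defining $\hat\rho(\mu,\cdot)$ as a uniformly convergent series of manifestly continuous functionals of $\nu$, and then apply the fact that a uniform limit of continuous functions is continuous. First I would disjointify the given cover: put $X_0' = X_0$ and $X_i' = X_i \setminus \bigcup_{j<i} X_j$, obtaining pairwise disjoint Borel sets with $\mu\left(\bigcup_i X_i'\right) = \mu\left(\bigcup_i X_i\right) = 1$ and still $\rho(x,y) = \rho_i(y)$ for every $(x,y) \in X_i' \times_Z Y$. Writing $N = X \setminus \bigcup_i X_i'$, we have $\mu(N) = 0$; since the image of $\mu \otimes_Z \nu$ under the projection to $X$ is $\mu$, the slab $N \times_Z Y$ is $(\mu \otimes_Z \nu)$-null for every $\nu \in \FR_\eta(Y)$. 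Using countable additivity over the disjoint measurable sets $X_i' \times_Z Y$ together with the hypothesis $\rho \equiv \rho_i$ there,
$$\hat\rho(\mu,\nu) = \bE^{\mu \otimes_Z \nu}[\rho] = \sum_i \int_{X_i' \times_Z Y} \rho_i(y)\, d(\mu \otimes_Z \nu).$$

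Next I would evaluate each summand using the conditional-independence structure of the fiber product. Extending the defining formula for $\mu \otimes_Z \nu$ from the semi-ring $\Sigma_0$ to integrals (by linearity on simple functions and monotone convergence) gives, for any bounded Borel $f$ on $Y$ and Borel $A \subseteq X$,
$$\int_{A \times_Z Y} f(y)\, d(\mu \otimes_Z \nu) = \int_Z \bP^\mu[A|Z]\, \bE^\nu[f|Z]\, d\eta.$$
Applying this with $A = X_i'$ and $f = \rho_i$, and setting $\chi_i = \bP^\mu[X_i'|Z] \in L^\infty(Z,\eta)$ (a fixed function, independent of $\nu$), the $i$-th summand becomes $F_i(\nu) = \int_Z \chi_i\, \bE^\nu[\rho_i|Z]\, d\eta$. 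I would then invoke Lemma \ref{lem:ContinuousConditionalExpectation} with $\pi_Y \colon Y \to Z$ in place of $\pi \colon X \to Y$, the fixed measure $\eta$ in place of $\nu$, the continuous function $\rho_i$ in place of $\varphi$, and $\chi_i$ as the test function in $L^\infty(Z,\eta)$. This yields that each $F_i \colon \FR_\eta(Y) \to [0,1]$ is continuous.

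Finally I would upgrade pointwise convergence to uniform convergence. Since $0 \le \rho_i \le 1$ and the $X$-marginal of $\mu \otimes_Z \nu$ is $\mu$,
$$0 \le F_i(\nu) \le (\mu \otimes_Z \nu)(X_i' \times_Z Y) = \mu(X_i'),$$
a bound that does not depend on $\nu$, and $\sum_i \mu(X_i') = 1$. Hence the tails $\sum_{i \ge M} F_i(\nu)$ are dominated by $\sum_{i \ge M} \mu(X_i') \to 0$ uniformly in $\nu$, so $\hat\rho(\mu,\cdot) = \sum_i F_i$ is a uniform limit of continuous functions and is therefore continuous on $\FR_\eta(Y)$.

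I expect the main obstacle to be exactly the passage from continuity of each $F_i$ to continuity of the sum, since an infinite sum of continuous functions need not be continuous. The decisive point — and what makes the hypotheses precisely the right ones — is that disjointifying the cover together with $\mu\left(\bigcup X_i\right) = 1$ produces the uniform, $\nu$-free tail bound $\sum_{i \ge M} \mu(X_i')$, forcing uniform convergence. A secondary point needing care is the justification of replacing $\rho$ by the $y$-only functions $\rho_i$ and of the conditional-expectation identity above, both of which rest on $\mu \otimes_Z \nu$ being conditionally independent over $Z$ with $X$-marginal equal to $\mu$.
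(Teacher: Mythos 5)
Your proof is correct and takes essentially the same route as the paper's: after disjointifying the $X_i$, both arguments reduce $\hat\rho(\mu,\nu)$ to the series $\sum_i \int_Z \bP^\mu[X_i|Z]\,\bE^\nu[\rho_i|Z]\,d\eta$, obtain continuity of each summand from Lemma~\ref{lem:ContinuousConditionalExpectation}, and conclude via uniform convergence of the series. Your explicit $\nu$-free tail bound $\sum_{i\ge M}\mu(X_i')$ simply spells out the paper's closing remark that the series converges absolutely and uniformly.
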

\begin{proof}
  We may assume that all the $X_i$ are disjoint.
  Then:
  \begin{align*}
    \hat \rho(\mu,\nu)
    & =
    \int \rho \, d(\mu \otimes_Z \nu)
    \\ & =
    \sum_{i\in I}     \int_{X_i \times_Z Y} \rho \, d(\mu \otimes_Z \nu)
    \\ & =
    \sum_{i\in I} \int \rho_i \cdot\bP^{\mu \otimes_Z \nu}[X_i \times_Z Y | Y] \, d\nu
    \\ & =
    \sum_{i\in I} \int \bE^\nu[\rho_i|Z] \cdot \bP^{\mu \otimes_Z \nu}[X_i \times_Z Y | Z] \, d\eta
    \\ & =
    \sum_{i\in I} \int \bE^\nu[\rho_i|Z] \cdot \bP^\mu[X_i| Z] \, d\eta
  \end{align*}
  Fixing $i \in I$, the mapping
  $\FR_\eta(Y) \to L^1(Z,\Sigma_Z,\eta)$ defined by
  $\nu \mapsto \bE^\nu[\rho_i|Z]$ is continuous in the weak topology
  by \fref{lem:ContinuousConditionalExpectation}.
  It follows that
  $\nu \mapsto \int \bE^\nu[\rho_i|Z] \cdot \bP^\mu[X_i| Z] \, d\eta$ is continuous.
  Finally, the series above converges absolutely and uniformly so,
  whence the continuity of $\nu \mapsto \rho(\mu,\nu)$.
\end{proof}

\begin{prp}
  \label{prp:ContinuityInOneVariable}
  Let $X$, $Y$ and $Z$ be compact Hausdorff spaces and
  $X \times_Z Y$ defined as above.
  Let $\rho\colon X \times_Z Y \to [0,1]$ be any function.
  Assume that:
  \begin{enumerate}
  \item The set $X$ is covered by a countable family of Borel sets
    $X = \bigcup_{i\in \bN} X_i$.
  \item Each $X_i$ is covered by a (possibly uncountable) family of
    relatively open subsets $X_i = \bigcup_{j\in J_i} G_{i,j}$.
  \item For each pair $(i,j)$ (where $i \in \bN$ and $j \in J_i$)
    there is a continuous function
    $\rho_{i,j}\colon Y \to [0,1]$ such that
    $\rho(x,y) = \rho_{i,j}(y)$ for every
    $(x,y) \in G_{i,j} \times_Z Y$.
  \end{enumerate}
  Then $\rho$ is Borel and
  for every $\eta \in \FR(Z)$ and $\mu \in \FR_\eta(X)$,
  the mapping
  $\hat \rho(\mu,\cdot) \colon \FR_\eta(Y) \to [0,1]$ is continuous.
\end{prp}
\begin{proof}
  We may assume that the $X_i$ are all disjoint.

  Restricted to $G_{i,j} \times_Z Y$, $\rho$ is continuous.
  Since each $G_{i,j}$ is relatively open in $X_i$,
  the set $G_{i,j} \times_Z Y$ is relatively open in
  $X_i \times_Z Y$.
  Thus $\rho$ restricted to $X_i \times_Z Y$ is continuous for each $i$.
  Moreover, each $X_i \times_Z Y$ is Borel in $X \times_Z Y$.
  If follows that $\rho$ is Borel.

  Now fix $\eta \in \FR(Z)$, $\mu \in \FR_\eta(X)$.
  Fix again $i \in \bN$.
  Since $\mu$ is regular, $\mu(X_i)$ is the supremum of
  $\mu(K)$ where $K \subseteq X_i$ is compact.
  Each such $K$ can be covered by finitely many of the $G_{i,j}$.
  Thus there is a countable family $J_i^0 \subseteq J_i$ such that
  $\mu(X_i) = \mu\left( \bigcup_{j\in J_i^0} G_{i,j} \right)$.
  It follows that
  \begin{gather*}
    \mu\left( \bigcup_{i\in \bN, j \in J^0_i} G_{i,j} \right) = 1.
  \end{gather*}
  Moreover each $G_{i,j}$ is relatively open in a Borel set and
  therefore a Borel set itself.
  The conditions of the Lemma are therefore fulfilled.
\end{proof}

\subsection{Some reminders regarding stable formulas}

Let $T$ denote a first order theory.
Let $\varphi(x,y)$ be a stable formula in $T$.
We may also assume that $T$ eliminates imaginaries.

Since $\varphi$ is stable, every $\varphi$-type over a model
$\hat p \in S_\varphi(\CM)$ is definable by a unique formula
$d_{\hat p}\varphi(y)$:
\begin{gather*}
  \hat p(x) = \{\varphi(x,b)\leftrightarrow d_{\hat p}\varphi(b)\}_{b\in\CM}.
\end{gather*}

Let $A \subseteq \CM$ denote an algebraically closed set, and we may assume
that $\CM$ is sufficiently saturated and homogeneous over $A$.
Then for every complete type $p \in S_x(A)$ there exists
a unique $\varphi$-type $\hat p \in S_\varphi(\CM)$ which is definable over
$A$ and compatible with $p$.
The (unique) definition of $\hat p$ is also referred to as the
$\varphi$-definition of $p$, denoted $d_p\varphi(y)$.

Now let $A \subseteq \CM$ be any set, not necessarily algebraically closed,
and $p \in S_x(A)$.
Let $\hat P \subseteq S_\varphi(\CM)$ denote the set of global $\varphi$-types
$\hat p$ which are definable over $acl(A)$ and compatible with $p$.
Then $\hat P$ is non-empty, finite, and $Aut(\CM/A)$ acts
transitively on $\hat P$.
We define a $[0,1]$-valued continuous predicate $\rho(p,y)$ by:
\begin{gather*}
  \rho(p,b)
  = \frac{|\{\hat p \in \hat P\colon \hat p \vdash \varphi(x,b)\}|}{|\hat P|}.
\end{gather*}
We consider $\rho(p,b)$ to be the \emph{probability} that
a randomly chosen non-forking $\varphi$-extension of $p$ should satisfy
$\varphi(x,b)$.

Shelah's notions of local rank
$R(\cdot,\varphi)$ and multiplicity $M(\cdot,\varphi)$, both with values in
$\bN \cup \{\infty\}$, provide a more useful way to calculate $\rho(p,b)$.
For our purpose it will be more convenient to define the multiplicity
of a partial type $\pi(x)$ at each rank $n$,
denoted $M(\cdot,\varphi,n)$.
\begin{enumerate}
\item If $\pi$ is consistent then $R(\pi,\varphi) \geq 0$.
\item Having defined when $R(\cdot,\varphi) \geq n$ we define
  $M(\cdot,\varphi,n)$.
  We say that $M(\cdot,\varphi,n) \geq m$ if there are types
  $\pi(x) \subseteq \pi_i(x)$ for $i < m$ such that for every $i < j < m$ there is
  a tuple $b_{ij}$ for which
  $\pi_i(x) \cup \pi_j(x') \vdash  \varphi(x,b_{ij})\leftrightarrow\neg\varphi(x',b_{ij})$,
  and in addition $R(\pi_i,\varphi) \geq n$ for all $i < m$.
\item If $M(\pi,\varphi,n) = \infty$ then $R(\pi,\varphi) \geq n+1$.
\end{enumerate}
The formula $\varphi$ is stable if and only if $R(\pi,\varphi)$ is always finite.
If $R(\pi,\varphi) = n$ then $M(\pi,\varphi) = M(\pi,\varphi,n)$ is the $\varphi$-multiplicity of
$\pi$.
Notice that $n < R(\pi,\varphi) \Longleftrightarrow M(\pi,\varphi,n) = \infty$ and
$n > R(\pi,\varphi) \Longleftrightarrow M(\pi,\varphi,n) = 0$.

If $[\pi] \subseteq S_\varphi(\CM)$ denoted the (closed) set of global $\varphi$-types
consistent with $\pi$ then it is not difficult to check that
$R(\pi,\varphi)$ is precisely the Cantor-Bendixson rank of $[\pi]$ in
$S_\varphi(\CM)$.
Similarly, $M(\pi,\varphi)$ is the Cantor-Bendixson multiplicity, namely the
number of types in $[\pi]$ of rank $R(\pi,\varphi)$.
In case $p \in S_x(A)$ is a complete type then
$\hat P$ is precisely is set of $\varphi$-types of maximal
Cantor-Bendixson rank $n = R(p,\varphi)$ in $[p]$,
so $M(p,\varphi) = |\hat P|$.
The number of $\hat p \in \hat P$ which contain some instance
$\varphi(x,b)$ is then precisely the number of types of rank $n$ in
$[p(x) \cup \{\varphi(x,b)\}]$, whereby:
\begin{gather*}
  \rho(p,b) = \frac{M\big( p\cup\{\varphi(x,b)\},\varphi,n \big)}{M(p,\varphi,n)}.
\end{gather*}
Finally, let $\xi(x,\bar a) \in p$ be such that
$M\big( \xi(x,\bar a),\varphi,n \big) = M(p,\varphi,n)$,
namely, such that $[\xi(x,\bar a)]$ and $[p]$ contain precisely
the same types of rank $n$.
Then we have:
\begin{gather*}
  \rho(p,b) = \frac{M\big( \xi(x,\bar a) \wedge\varphi(x,b),\varphi,n \big)}
  {M\big( \xi(x,\bar a),\varphi,n \big)}.
\end{gather*}

The value $\rho(p,b)$ only depends on $q(y) = tp(b/A)$, so it is
legitimate to write $\rho(p,q)$.
We may re-write $p$ and $q$ as $p(x,A)$ and $q(y,A)$, where
$W$ is a tuple of variables corresponding to $A$,
$p(x,W) \in S_{x,W}(T)$ and $q(y,W) \in S_{y,W}(T)$ are complete types
and $p{\restriction}_W = q{\restriction}_W \in S_W(T)$.
In other words, $(p,q) \in S_{x,W}(T) \times_{S_W(T)} S_{y,W}(T)$.
To avoid this fairly cumbersome notation let us
write $\Sigma_{x,y,W}$ for the fiber product $S_{x,W}(T) \times_{S_W(T)} S_{y,W}(T)$.

Conversely, let $(p,q) \in \Sigma_{x,y,W}$ be any pair in the fiber product
and let $A$  realize their common restriction to $W$.
Then $\rho\big( p(x,A), q(y,A) \big)$ only depends on $(p,q)$ and not on
the choice of $A$.
In other words, $\rho\big( p(x,W), q(y,W) \big)$ makes sense for every
$(p,q) \in \Sigma_{x,y,W}$.
Henceforth we will therefore consider $\rho$ as a function
$\rho\colon \Sigma_{x,y,W} \to [0,1]$.

The next step is to show that $\rho$
satisfies the assumptions of \fref{prp:ContinuityInOneVariable}.

We define $\Xi_{n,m}$ as the set of all formulas
$\xi(x,\bar w)$ for which $M\big( \xi(x,\bar w),\varphi,n \big) \geq m$
is impossible,
namely such that for every choice of parameters
$\bar a$ in a model of $T$:
\begin{gather*}
  M\big( \xi(x,\bar a),\varphi,n \big) < m.
\end{gather*}
Clearly, if $\xi \in \Xi_{n,(m+1)}$ then
$M\big( \xi(x,\bar w),\varphi,n \big) \geq m$ is equivalent to
$M\big( \xi(x,\bar w),\varphi,n \big) = m$.

\begin{lem}
  Let $\chi(x,t)$ be any formula.
  Then the property
  $M\big( \chi(x,t),\varphi,n \big) \geq m$ is type-definable in $t$.
  In other words, there is a partial type $\pi(t)$ such that for every
  parameter $c$:
  \begin{gather*}
    M\big( \chi(x,c),\varphi,n \big) \geq m
    \quad \Longleftrightarrow \quad
    \pi(c).
  \end{gather*}
\end{lem}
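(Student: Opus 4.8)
The plan is to prove, by induction on $n$, a uniform statement covering both rank and multiplicity at once: for every formula $\eta(x,\bar s)$ (with the free variables partitioned into object variables $x$ and parameter variables $\bar s$) and all $m\in\bN$, each of the properties ``$R(\eta(x,\bar s),\varphi)\ge n$'' and ``$M(\eta(x,\bar s),\varphi,n)\ge m$'' is type-definable in $\bar s$. The Lemma is then the special case $\eta=\chi$, $\bar s=t$. The induction chains through three links. The base case is immediate: $R(\eta(x,\bar c),\varphi)\ge 0$ merely says that $\eta(x,\bar c)$ is consistent, i.e.\ it holds iff $\exists x\,\eta(x,\bar c)$, so it is even definable. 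At a successor one reads off the rank from the multiplicities one level below: by the stated equivalence $n<R(\pi,\varphi)\Leftrightarrow M(\pi,\varphi,n)=\infty$, the property $R(\eta(x,\bar s),\varphi)\ge n+1$ is just the conjunction $\bigwedge_{m}[M(\eta(x,\bar s),\varphi,n)\ge m]$, and an arbitrary intersection of type-definable conditions is type-definable. Thus the whole induction reduces to a single implication: if ``$R(\cdot,\varphi)\ge n$'' is type-definable for all formulas (in their parameters), then so is ``$M(\cdot,\varphi,n)\ge m$''.

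To carry out that implication I would first rewrite the multiplicity condition, which is phrased in terms of partial types, as an existential statement about finitely many formulas. Given $\bar c$, the inequality $M(\chi(x,\bar c),\varphi,n)\ge m$ asks for $m$ pairwise $\varphi$-separated partial types $\pi_i\supseteq\chi(x,\bar c)$ with $R(\pi_i,\varphi)\ge n$. From such a configuration I would extract, for each pair $i<j$, a separating instance $b_{ij}$ together with the sign $s_{ij}$ that $\pi_i$ forces on $\varphi(x,b_{ij})$, and set $\theta_i(x):=\chi(x,\bar c)\wedge\bigwedge_{j\ne i}\varphi(x,b_{ij})^{s_{ij}}$. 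Then $\pi_i\vdash\theta_i$, so $[\pi_i]\subseteq[\theta_i]$, and using the identification of $R(\pi,\varphi)$ with the Cantor--Bendixson rank of $[\pi]$ in $S_\varphi(\CM)$ recalled above, $R(\theta_i,\varphi)\ge R(\pi_i,\varphi)\ge n$; moreover $\theta_i,\theta_j$ remain separated on $b_{ij}$. Conversely, if some tuple $\bar b=(b_{ij})$ yields formulas $\theta_i$ of this shape that are pairwise separated with $R(\theta_i,\varphi)\ge n$, then each $[\theta_i]$ has Cantor--Bendixson rank $\ge n$, hence contains a $\varphi$-type $\hat p_i$ of rank $\ge n$, and the partial types $\pi_i:=\theta_i\cup\hat p_i$ witness $M(\chi(x,\bar c),\varphi,n)\ge m$. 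Writing $\bar b$ for the tuple $(b_{ij})_{i<j<m}$ and letting $\sigma$ range over the finitely many admissible sign patterns, this shows that $M(\chi(x,\bar c),\varphi,n)\ge m$ holds iff $\exists\bar b\,\bigvee_\sigma\bigwedge_{i<m}[R(\chi(x,\bar c)\wedge\psi_i^\sigma(x,\bar b),\varphi)\ge n]$, where $\psi_i^\sigma$ is the conjunction of $\varphi$-instances prescribed by $\sigma$.

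By the induction hypothesis, applied to the formula $\chi(x,t)\wedge\psi_i^\sigma(x,\bar u)$, each inner condition is type-definable in the parameter block $(\bar c,\bar b)$; a finite disjunction and conjunction of type-definable conditions is again type-definable, so the matrix is type-defined by some partial type $\Gamma(t,\bar b)$. The only genuinely delicate point is the outer quantifier $\exists\bar b$, and here I would use that type-definability is preserved under projection: $\{t:\exists\bar b\,\Gamma(t,\bar b)\}$ is type-defined by the formulas $\exists\bar b\,(\gamma_1\wedge\cdots\wedge\gamma_k)$ as $\{\gamma_1,\dots,\gamma_k\}$ ranges over the finite subsets of $\Gamma$, since satisfaction of all of these by $\bar c$ is exactly finite satisfiability of $\Gamma(\bar c,\bar b)$ in $\bar b$, which by compactness is equivalent to its being realized. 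This closes the induction. I expect the main obstacle to be the reduction step --- faithfully translating the partial-type formulation of multiplicity into the finitary existential form, and in particular verifying both directions of the equivalence through the Cantor--Bendixson picture (producing a rank-$\ge n$ point inside a rank-$\ge n$ closed set, and keeping track of the separating instances and their signs). Once that equivalence is in place, the remaining type-definability bookkeeping, including the passage through the existential quantifier, is routine.
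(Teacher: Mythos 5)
Your proof is correct; the paper itself gives no argument here (its proof reads simply ``Standard''), and what you have written is a complete and faithful rendering of exactly the standard argument being invoked: a simultaneous induction on $n$ through the links $R\ge 0$ (consistency), $M(\cdot,\varphi,n)\ge m$ from $R\ge n$ via the finitary reformulation with separating instances $b_{ij}$ and sign patterns, and $R\ge n+1$ as $\bigwedge_m M(\cdot,\varphi,n)\ge m$, using closure of type-definable conditions under (small) intersections, finite unions, and projection by compactness. The only remarks worth making are cosmetic: in the converse direction you may take $\pi_i=\theta_i$ outright, with no need to pass through a rank-$\ge n$ point of $[\theta_i]$, and in the forward direction the fact that each $\pi_i$ \emph{decides} $\varphi(x,b_{ij})$ (which you use when extracting $s_{ij}$) follows from the separation entailment together with consistency of $\pi_j$, which is guaranteed since $R(\pi_j,\varphi)\ge n\ge 0$.
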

\begin{proof}
  Standard.
\end{proof}

\begin{lem}
  Assume that $\xi(x,\bar w) \in \Xi_{n,(m+1)}$.
  Then there are formulas $\hat \xi_{n,m,\ell}(y,\bar w)$ for $0 \leq \ell \leq m$
  such that:
  \begin{enumerate}
  \item The formulas $\{\hat \xi_{n,m,\ell}\colon 0 \leq \ell \leq m\}$ define a partition:
    one and only one of them holds for every $b,\bar a$ in a model of
    $T$.
  \item Modulo
    $M\big( \xi(x,\bar w),\varphi,n \big) = m$,
    the formula $\hat \xi_{n,m,\ell}(y,\bar w)$ defines the property
    $M\big( \xi(x,\bar w)\wedge \varphi(x,y) ,\varphi,n \big) = \ell$.
  \end{enumerate}
\end{lem}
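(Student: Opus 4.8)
The plan is to upgrade the type-definability provided by the previous lemma to genuine definability, the leverage being an additivity identity for the local multiplicity together with compactness. Throughout I abbreviate $M(\cdot)=M(\cdot,\varphi,n)$, and I recall that $\xi\in\Xi_{n,(m+1)}$ means $M(\xi(x,\bar a))\leq m$ for every choice of $\bar a$. First I would invoke the previous lemma three times. Applied to $\xi(x,\bar w)\wedge\varphi(x,y)$, with the pair $(y,\bar w)$ in the role of the parameter, it gives for each $\ell$ a partial type $\alpha_\ell(y,\bar w)$ expressing $M\big(\xi(x,\bar w)\wedge\varphi(x,y)\big)\geq\ell$. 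Applied to $\xi(x,\bar w)\wedge\neg\varphi(x,y)$ it gives partial types $\beta_k(y,\bar w)$ expressing $M\big(\xi(x,\bar w)\wedge\neg\varphi(x,y)\big)\geq k$. Applied to $\xi(x,\bar w)$ at threshold $m$ it gives a partial type $\Theta(\bar w)$; since $M(\xi)\leq m$ always, $\Theta(\bar a)$ holds precisely when $M(\xi(x,\bar a))=m$, so $\Theta$ is the type-definable form of the side condition ``modulo $M(\xi(x,\bar w),\varphi,n)=m$''.

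The decisive step is the identity, valid modulo $\Theta$,
\[ M\big(\xi(x,\bar w)\wedge\varphi(x,y)\big)+M\big(\xi(x,\bar w)\wedge\neg\varphi(x,y)\big)=M\big(\xi(x,\bar w)\big)=m. \]
To prove it, fix $b,\bar a$ and work inside $[\xi(x,\bar a)]\subseteq S_\varphi(\CM)$. Every global $\varphi$-type decides $\varphi(x,b)$, so $[\xi(x,\bar a)]$ is the disjoint union of the relatively clopen pieces $[\xi(x,\bar a)\wedge\varphi(x,b)]$ and $[\xi(x,\bar a)\wedge\neg\varphi(x,b)]$. Because Cantor--Bendixson rank is intrinsic to the ambient space, the $m$ rank-$n$ types of $[\xi(x,\bar a)]$ are split, each into exactly one of the two pieces, and a rank-$n$ type of a piece is exactly a rank-$n$ type of $[\xi(x,\bar a)]$ lying in it; reading off the two multiplicities as these two counts yields the identity. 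I expect this additivity, together with the routine bookkeeping that $M(\cdot,\varphi,n)$ really is the number of rank-$n$ types in the relevant clopen piece (and is $0$ when the rank drops below $n$), to be the main obstacle, although morally it is just additivity of Cantor--Bendixson multiplicity over a clopen partition.

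The point of the identity is that it turns an open condition into a closed one: modulo $\Theta$,
\[ M(\xi\wedge\varphi)\geq\ell \;\Longleftrightarrow\; M(\xi\wedge\neg\varphi)\leq m-\ell \;\Longleftrightarrow\; \neg\beta_{m-\ell+1}(y,\bar w). \]
Hence the two closed sets $[\alpha_\ell]\cap[\Theta]$ and $[\beta_{m-\ell+1}]\cap[\Theta]$ in the type space are disjoint and together cover $[\Theta]$. By compactness, complementary closed subsets of the closed set $[\Theta]$ are relatively clopen, so each is cut out on $[\Theta]$ by a single formula: I thereby obtain formulas $\gamma_\ell(y,\bar w)$ with $\gamma_\ell\leftrightarrow\alpha_\ell$ modulo $\Theta$, where $\gamma_0=\top$ and $\gamma_{m+1}=\bot$.

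Finally, to get a partition of all parameters and not merely of those satisfying $\Theta$, I would pass to the cumulative conjunctions $\gamma'_\ell=\bigwedge_{k\leq\ell}\gamma_k$, which form a chain decreasing from $\gamma'_0=\top$ to $\gamma'_{m+1}=\bot$, and set
\[ \hat\xi_{n,m,\ell}(y,\bar w)=\gamma'_\ell(y,\bar w)\wedge\neg\gamma'_{\ell+1}(y,\bar w). \]
For every $b,\bar a$ there is a unique $\ell$ with $\gamma'_\ell$ true and $\gamma'_{\ell+1}$ false, so exactly one $\hat\xi_{n,m,\ell}$ holds, giving clause~(1). Modulo $\Theta$ the chain $\gamma'_\ell$ coincides with $\alpha_\ell$ (as the $\gamma_k$ are then decreasing in $k$), whence $\hat\xi_{n,m,\ell}$ holds exactly when $M(\xi\wedge\varphi)=\ell$, giving clause~(2).
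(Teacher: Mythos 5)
Your proposal is correct and runs on the same engine as the paper's own proof: the paper packages the two lower bounds into partial types $\pi_\ell(\bar w,y)$ asserting $M\big(\xi(x,\bar w)\wedge\varphi(x,y),\varphi,n\big)\geq\ell$ and $M\big(\xi(x,\bar w)\wedge\neg\varphi(x,y),\varphi,n\big)\geq m-\ell$, derives their pairwise inconsistency from superadditivity of multiplicity against the cap $m$ given $\xi\in\Xi_{n,(m+1)}$, separates them by formulas via compactness, and patches the last formula to obtain a partition --- your separation of $[\alpha_\ell]\cap[\Theta]$ from $[\beta_{m-\ell+1}]\cap[\Theta]$ and the cumulative-conjunction chain are the same idea with slightly different bookkeeping. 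One small merit of your write-up is that you prove explicitly (via locality and additivity of Cantor--Bendixson rank and multiplicity over the relatively clopen split of $[\xi(x,\bar a)]$) the identity $M(\xi\wedge\varphi)+M(\xi\wedge\neg\varphi)=m$, which the paper uses only implicitly in asserting that exactly one $\pi_\ell(\bar a,b)$ holds whenever $M\big(\xi(x,\bar a),\varphi,n\big)=m$.
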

\begin{proof}
  For $0 \leq \ell \leq m$ let $\pi_\ell(\bar w,y)$ be the partial type expressing
  that:
  \begin{gather*}
    M\big( \xi(x,\bar w) \wedge \varphi(x,y),\varphi,n \big) \geq \ell \quad \& \quad
    M\big( \xi(x,\bar w) \wedge \neg\varphi(x,y),\varphi,n \big) \geq m-\ell.
  \end{gather*}
  If $\ell < \ell'$ then
  $\pi_\ell\wedge\pi_{\ell'}$ imply that
  $M\big( \xi(x,\bar w),\varphi,n \big) \geq \ell' + m - \ell > m$ contradicting the
  assumption on $\xi$.
  Thus $\{\pi_\ell\colon 0 \leq \ell\leq m\}$ are mutually contradictory.
  We can therefore find formulas $\hat \xi_{n,m,\ell} \in \pi_\ell$ which contradict
  one another as $\ell$ varies from $0$ to $m$.
  We may further replace $\hat \xi_{n,m,m}$ with
  $\neg\bigvee_{\ell < m} \hat \xi_{n,m,\ell}$ to achieve a partition.
  On the other hand, if $M\big( \xi(x,\bar a),\varphi,n \big) = m$ and $b$ is
  arbitrary then
  $\pi_\ell(\bar a,b)$ holds for precisely one $0 \leq \ell \leq m$, whence the
  second item.
\end{proof}

\begin{lem}
  \label{lem:DefinableRelativeMultiplicity}
  Assume that $\xi(x,\bar w) \in \Xi_{n,(m+1)}$.
  Then is a continuous predicate
  $\hat \xi_{n,m}(y,\bar w)$ taking values in $\{\frac{\ell}{m}\colon0 \leq \ell \leq m\}$
  such that modulo
  $M\big( \xi(x,\bar w),\varphi,n \big) = m$:
  \begin{gather*}
    \hat \xi_{n,m}(y,\bar w) =
    \frac{M\big( \xi(x,\bar w)\wedge \varphi(x,y) ,\varphi,n \big)}{M\big( \xi(x,\bar w) ,\varphi,n \big)}
  \end{gather*}
\end{lem}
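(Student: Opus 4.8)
The plan is to read the predicate directly off the partition furnished by the preceding lemma, so that essentially all of the work is already done. First I would apply that lemma to the given $\xi(x,\bar w) \in \Xi_{n,(m+1)}$, producing the $m+1$ first order formulas $\hat\xi_{n,m,\ell}(y,\bar w)$ for $0 \leq \ell \leq m$. Recall their two properties: they partition the space of pairs $(b,\bar a)$ (exactly one holds at each point), and, modulo $M\bigl(\xi(x,\bar w),\varphi,n\bigr) = m$, the formula $\hat\xi_{n,m,\ell}$ picks out precisely those parameters for which $M\bigl(\xi(x,\bar w)\wedge\varphi(x,y),\varphi,n\bigr) = \ell$.

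Next I would assemble the required predicate as the weighted sum
$$\hat\xi_{n,m}(y,\bar w) = \sum_{\ell=0}^{m} \frac{\ell}{m}\,\mathbbm{1}\bigl[\hat\xi_{n,m,\ell}(y,\bar w)\bigr],$$
i.e.\ the function that takes the value $\ell/m$ on the piece carved out by $\hat\xi_{n,m,\ell}$. Because the $\hat\xi_{n,m,\ell}$ form a partition, exactly one indicator is nonzero at each $(b,\bar a)$, so the expression is well defined and its range is contained in $\{\ell/m \colon 0 \leq \ell \leq m\}$, as demanded. To see that $\hat\xi_{n,m}$ genuinely qualifies as a \emph{continuous} predicate, note that each $\hat\xi_{n,m,\ell}$ is an ordinary first order formula, so the set it defines is clopen in the type space $S_{y,\bar w}(T)$; hence $\hat\xi_{n,m}$ is locally constant and therefore continuous as a $[0,1]$-valued function of the first order type of $(b,\bar a)$.

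It then remains only to verify the displayed identity. Working modulo $M\bigl(\xi(x,\bar w),\varphi,n\bigr) = m$, fix parameters $(b,\bar a)$ and let $\ell$ be the unique index with $\hat\xi_{n,m,\ell}(b,\bar a)$. By the second clause of the preceding lemma this forces $M\bigl(\xi(x,\bar a)\wedge\varphi(x,b),\varphi,n\bigr) = \ell$, while $M\bigl(\xi(x,\bar a),\varphi,n\bigr) = m$ by our standing assumption; hence $\hat\xi_{n,m}(b,\bar a) = \ell/m$ is exactly the stated quotient. I do not anticipate any real obstacle here: the substantive combinatorics of local rank and multiplicity was already extracted in the previous two lemmas, and this statement merely repackages their partition into a single $[0,1]$-valued predicate. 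The only point meriting a sentence of justification is the verification that the assembled function is continuous, which I handled above by the clopenness of the definable pieces.
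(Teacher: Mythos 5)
Your proof is correct and follows exactly the route the paper intends: the paper's own proof of this lemma is the single sentence ``This is just a re-statement of the previous Lemma,'' and your argument simply makes explicit the weighted-sum construction $\hat\xi_{n,m} = \sum_{\ell\leq m} \frac{\ell}{m}\,\mathbbm{1}\bigl[\hat\xi_{n,m,\ell}\bigr]$ and the clopenness observation that the paper leaves implicit. No gaps; your extra sentence on continuity via locally constant values on clopen pieces is precisely the right justification.
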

\begin{proof}
  This is just a re-statement of the previous Lemma.
\end{proof}

We define:
\begin{gather*}
  P^0_{n,m}(x,W) = \{\neg\xi(x,\bar w)\colon \bar w \subseteq W, \xi \in \Xi_{n,m}\}.
\end{gather*}

\begin{lem}
  Let $p(x,W) = tp(a/A)$ (in a model of $T$).
  Then the following are equivalent:
  \begin{enumerate}
  \item $M\big( p(x,A),\varphi,n \big) \geq m$.
  \item $p(x,W)$ contains no member of $\Xi_{n,m}$.
  \item $p(x,W) \supseteq P^0_{n,m}(x,W)$.
  \end{enumerate}
  In other words, $P^0_{n,m}(x,W)$ expresses that
  $M\big( tp(x/W), \varphi,n \big) \geq m$.
\end{lem}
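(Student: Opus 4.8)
The plan is to dispose of $(2)\Leftrightarrow(3)$ for free and then handle the two implications between $(1)$ and $(2)$ separately; all the real content will sit in $(2)\Rightarrow(1)$. For $(2)\Leftrightarrow(3)$ the key observation is that $p(x,W)$, being $tp(a/A)$ with the parameters displayed as variables, is a \emph{complete} type in the variables $x,W$: for each formula $\xi(x,\bar w)$ with $\bar w\subseteq W$, exactly one of $\xi$ and $\neg\xi$ lies in $p(x,W)$. Hence ``$p(x,W)$ contains no member of $\Xi_{n,m}$'' says precisely that $\neg\xi(x,\bar w)\in p(x,W)$ for every $\xi\in\Xi_{n,m}$ and every $\bar w\subseteq W$, which is the assertion $p(x,W)\supseteq P^0_{n,m}(x,W)$.

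For $(1)\Rightarrow(2)$ I would argue contrapositively and use monotonicity of multiplicity. If $(2)$ fails there are $\xi\in\Xi_{n,m}$ and $\bar w\subseteq W$ with $\xi(x,\bar w)\in p(x,W)$; substituting the actual parameters $\bar a\subseteq A$ gives $\xi(x,\bar a)\in p(x,A)=tp(a/A)$, and membership in $\Xi_{n,m}$ forces $M\big(\xi(x,\bar a),\varphi,n\big)<m$. Since $p(x,A)\vdash\xi(x,\bar a)$ we have $[p(x,A)]\subseteq[\xi(x,\bar a)]$ in $S_\varphi(\CM)$, and a stronger partial type admits no more $\varphi$-separated branches of rank $\geq n$ than a weaker one (immediate from the definition of $M(\cdot,\varphi,n)\geq m$), so $M\big(p(x,A),\varphi,n\big)\leq M\big(\xi(x,\bar a),\varphi,n\big)<m$, i.e.\ $(1)$ fails.

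The substantive direction is $(2)\Rightarrow(1)$, again by contraposition: assume $M\big(p(x,A),\varphi,n\big)<m$. The first step is the \emph{finite character} of multiplicity, producing a single formula $\xi_0(x,\bar a)\in p(x,A)$ with $M\big(\xi_0(x,\bar a),\varphi,n\big)<m$. This is where I expect the main obstacle, since one cannot merely intersect the sets $[\xi]\cap\{R(\cdot,\varphi)\geq n\}$ over $\xi\in p$ and count points: when $R(p,\varphi)>n$ these sets are infinite and points can escape to a limit, so naive compactness fails. I would first dispose of the trivial case $R(p,\varphi)<n$ using finite character of the local rank alone. Otherwise $M\big(p(x,A),\varphi,n\big)<m$ being finite forces $R(p,\varphi)=n$; fix $\psi_0\in p$ with $R(\psi_0,\varphi)=n$ and replace every $\xi\in p$ by $\xi\wedge\psi_0$, so that one works only with formulas of rank exactly $n$, for which the rank-$n$ types are maximal, hence finite in number and isolated. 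The sets $[\xi\wedge\psi_0]\cap\{R(\cdot,\varphi)=n\}$ are then \emph{finite}, decreasing and directed, with intersection $[p]\cap\{R(\cdot,\varphi)=n\}$ of size $M\big(p(x,A),\varphi,n\big)<m$; finitely many conjunctions therefore suffice to discard the finitely many extraneous types and yield $\xi_0$.

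Finally I would upgrade $\xi_0$, whose smallness is known only at the particular parameter $\bar a$, to a genuine member of $\Xi_{n,m}$ still lying in $p$. Applying the earlier Lemma on type-definability of the property $M\big(\chi(x,t),\varphi,n\big)\geq m$ to $\chi=\xi_0$ yields a partial type $\pi(\bar w)$ with $M\big(\xi_0(x,c),\varphi,n\big)\geq m\Leftrightarrow c\models\pi$. Since $M\big(\xi_0(x,\bar a),\varphi,n\big)<m$, the parameter $\bar a$ omits some $\psi(\bar w)\in\pi$; set $\xi(x,\bar w)=\xi_0(x,\bar w)\wedge\neg\psi(\bar w)$. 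For any parameters $\bar a'$: if $\models\psi(\bar a')$ then $\xi(x,\bar a')$ is inconsistent and $M=0$, while if $\models\neg\psi(\bar a')$ then $\bar a'$ omits $\pi$, so $M\big(\xi_0(x,\bar a'),\varphi,n\big)<m$ and $\xi(x,\bar a')\equiv\xi_0(x,\bar a')$; either way $M\big(\xi(x,\bar a'),\varphi,n\big)<m$, whence $\xi\in\Xi_{n,m}$. As $\xi_0(x,\bar a)\in p$ and $\neg\psi(\bar a)$ holds, completeness of $p$ gives $\xi(x,\bar w)\in p(x,W)$, so $p(x,W)$ contains a member of $\Xi_{n,m}$ and $(2)$ fails, completing the contrapositive.
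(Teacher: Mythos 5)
Your proof is correct, but it takes a genuinely different route through the substantive direction than the paper does. The paper's own proof is three sentences: it declares (i)$\Rightarrow$(ii)$\Leftrightarrow$(iii) immediate, and for the contrapositive of (ii)$\Rightarrow$(i) it notes that $M\big(p(x,A),\varphi,n\big)\geq m$ amounts to the consistency of a certain tree scheme whose leaves satisfy $p(x,A)$, so that when $M\big(p(x,A),\varphi,n\big)<m$, compactness traces the inconsistency to a single formula $\xi(x,\bar a)\in p$ together with a finite fragment of the scheme; that fragment is expressible by a first order formula $\theta(\bar w)$ (these $\theta$'s are precisely the members of the partial type $\pi$ from the earlier type-definability lemma), and conjoining $\neg\theta(\bar w)$ lands one in $\Xi_{n,m}$. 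Thus the paper's single compactness step accomplishes both halves of your argument at once. Where you genuinely diverge is in producing the single formula $\xi_0\in p$: your Cantor--Bendixson stabilization (splitting off the case $R(p,\varphi)<n$ via finite character of the rank, localizing to a rank-$n$ formula $\psi_0$ so that the rank-$n$ types in $[\psi_0]$ are finitely many, and observing that a directed decreasing family of finite sets is achieved at a member of minimal cardinality) is valid, but the detour is avoidable: since ``$M(\cdot,\varphi,n)\geq m$'' is itself the consistency of a scheme of formulas, its failure has finite character by compactness alone, with no case split on rank; your worry that ``naive compactness fails'' applies only to compactness used for point-counting in the type space, not to the scheme. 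On the other side of the ledger, your final relativization step is more careful than the paper's wording: compactness a priori gives smallness of the multiplicity only at the particular parameters $\bar a$, and the paper's curt ``in which case $\xi\in\Xi_{n,m}$'' tacitly requires exactly your move of conjoining the negation of a failed formula $\psi$ from the type-definable condition (with the two-case check that either $\psi(\bar a')$ holds, making $\xi(x,\bar a')$ inconsistent, or it fails, reducing to $\xi_0$), which you spell out correctly.
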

\begin{proof}
  The implications (i) $\Longrightarrow$ (ii) $\Longleftrightarrow$ (iii) are immediate.
  If $M\big( p(x,A), \varphi,n \big) < m$ then the existence of a certain
  tree whose leaves satisfy $p(x,A)$ is contradictory.
  By compactness this is due to
  some formula $\xi(x,\bar w) \in p(x,W)$ in which case
  $\xi \in \Xi_{n,m}$.
\end{proof}

For $\xi \in \Xi_{n,(m+1)}$ it is convenient to define:
\begin{gather*}
  P^\xi_{n,m}(x,W) = P^0_{n,m}(x,W) \cup \{\xi(x,\bar w)\}.
\end{gather*}
Let also:
\begin{gather*}
  P_{n,m} = \bigcup_{\xi\in\Xi_{n,(m+1)}}  [P^\xi_{n,m}]= [P^0_{n,m}] \setminus [P^0_{n,(m+1)}] \subseteq S_{x,W}(T).
\end{gather*}
This is the collection of types
$p(x,W) \in S_{x,W}(T)$ such that
$M\big( p(x,W),\varphi,n \big) = m$.
While it is in general
neither open nor closed it is locally closed and therefore
a Borel set.
Thus family
$\{P_{n,m}\colon n < \omega, 1 \leq m < \omega\}$ forms a countable partition of
$S_{x,W}(T)$ into Borel sets.
On the other hand
$P_{n,m}$ is a union of relatively clopen subsets
of the closed set $[P^0_{n,m}]$.
Moreover, let $(p,q) \in \Sigma_{x,y,W}$ and assume that $p(x,W) \models P^\xi_{n,m}(x,W)$,
$\xi(x,\bar w) \in \Xi_{n,(m+1)}$.
Let $\hat \xi_{n,m}(y,\bar w)$ be as in
\fref{lem:DefinableRelativeMultiplicity}.
Then $\rho(p,q) = \hat \xi_{n,m}(y,\bar w)^q$.

We have thus checked that
$\rho\colon \Sigma_{x,y,W} \to [0,1]$
satisfies the conditions of \fref{prp:ContinuityInOneVariable}.

\subsection{The main results}  \label{ss-main}

We continues with the same assumptions, namely that $T$ is a classical
first order theory and $\varphi(x,y)$ a stable formula.
We shall use boldface characters to denote objects of the randomized
realm.
Thus a model of $T^R$ may be denoted $\rCM$, a subset thereof $\rA$, a
type $\rp$, and so on, while objects related to the original
theory will be denoted using lightface as usual.

For our purposes here it will be more convenient to consider the
theory $T^R$ in a single-sorted language in which we have
a predicate
symbol $\bP[\psi(\bar x)]$ for every formula
$\psi(\bar x)$ in the original
language.
Each such predicate is definable in the two-sorted language given
earlier by the identity
$\bP[\psi] = \mu(\llbracket \psi \rrbracket)$.
Moreover, $T^R$ admits quantifier elimination in this language as well
(for example, since the atomic formulas separate types).

Let us translate the conclusion of
\fref{prp:ContinuityInOneVariable} applied to $\rho$
to the situation at hand.
First of all we know that
$\FR\big( S_{x,W}(T) \big) = S_{x,W}(T^R)$ and so on, so:
\begin{gather*}
  \FR\big( S_{x,W}(T) \big) \times_{\FR\big( S_W(T) \big)}
  \FR\big( S_{y,W}(T) \big)
  =
  S_{x,W}(T^R) \times_{S_W(T^R)} S_{y,W}(T^R).
\end{gather*}
Let $\rA \subseteq \rCM \models T^R$ and let $\rp(x,\rA) \in S_x(\rA)$.
Let $\rr(W) = tp(\rA)$, so $\rp(x,W)$ lies in the fiber above $\rr$.
Similarly, the fiber of $S_{y,W}(T^R)$ lying above $\rr$ can be
identified with $S_y(\rA)$.
Then \fref{prp:ContinuityInOneVariable} says that
the function
$\hat \rho(\rp(x,\rA),\cdot) \colon S_y(\rA) \to [0,1]$ is continuous.
We may therefore identify it with an $\rA$-definable predicate:
\begin{gather*}
  \hat \rho\big( \rp(x,\rA),\rb \big) := \hat \rho\big( \rp,tp(\rb,\rA) \big)
\end{gather*}

\begin{prp}
  Assume that $\rb \in \rA$.
  Then $\hat \rho\big( \rp(x,\rA),\rb \big) = \bP[ \varphi(x,\rb) ]^{\rp(x,\rA)}$.
  In other words, $\hat \rho\big( \rp(x,\rA),y \big)$ is the
  $\bP[\varphi]$-definition of $\rp(x,\rA)$.
\end{prp}
\begin{proof}
  Indeed, let $\rq(x,W) = tp(\rb,\rA)$, and say that
  $w_0 \in W$ corresponds to $\rb \in A$.
  Then the measure $\rq$ is concentrated on those types
  $q(y,W)$ which satisfy $y = w_0$, and similarly
  the measure $\rp \otimes_W \rq$ on $\Sigma_{x,y,W}$ is concentrated on those
  pairs $(p,q)$ where $q$ is such.
  For such pairs we have $\rho(p,q) = 1$ if $p \models \varphi(x,w_0)$ and zero
  otherwise, so
  \begin{align*}
    \hat \rho\big( \rp(x,\rA),\rb \big)
    & =
    \bE^{\rp \otimes_W \rq}[ \rho ]
    = \bP[ \varphi(x,w_0) ]^\rp
    = \bP[ \varphi(x,\rb) ]^{\rp(x,\rA)}.
    \qedhere
  \end{align*}
\end{proof}

We now have everything we need in order to prove preservation of
stability.

\begin{thm}
  If $\varphi(x,y)$ is a stable formula of $T$ then
  $\bP[ \varphi(x,y) ]$ is a stable formula of $T^R$.
  If $T$ is a stable theory then so is $T^R$.
\end{thm}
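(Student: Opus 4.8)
The plan is to harvest both assertions from the apparatus already in place, so that almost nothing genuinely new has to be proved here. For the first assertion I would invoke the local definability criterion for stability in continuous logic from \cite{BenYaacov-Usvyatsov:CFO}: a formula $\Phi(x,y)$ of a continuous theory is stable precisely when every $\Phi$-type over every model is definable (and we will in fact obtain definability over every parameter set). Everything needed to verify this for $\Phi = \bP[\varphi]$ has been assembled in the previous subsection. Indeed, fix $\rA \subseteq \rCM \models T^R$ and $\rp(x,\rA) \in S_x(\rA)$. Having checked that $\rho$ meets the hypotheses of \fref{prp:ContinuityInOneVariable}, that proposition gives that $\hat\rho(\rp(x,\rA),\cdot)\colon S_y(\rA) \to [0,1]$ is continuous, hence is an $\rA$-definable predicate; and the preceding Proposition identifies its value at each $\rb \in \rA$ with $\bP[\varphi(x,\rb)]^{\rp(x,\rA)}$. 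Thus $y \mapsto \hat\rho(\rp(x,\rA),y)$ is exactly the $\bP[\varphi]$-definition of $\rp(x,\rA)$, every $\bP[\varphi]$-type is definable, and $\bP[\varphi]$ is stable.

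For the second assertion I would upgrade from a single formula to the whole theory by feeding the first assertion into quantifier elimination. If $T$ is stable then every first order formula $\psi(\bar x, \bar y)$ of $L$ is stable under every partition of its variables into object and parameter blocks, so by the first part each $\bP[\psi(\bar x, \bar y)]$ is a stable formula of $T^R$. By Theorem~\ref{qe}, every formula $\Phi(\bar x, \bar y)$ of the single-sorted $L^R$ is $T^R$-equivalent to a quantifier-free one, that is, to a continuous combination of atomic predicates $\bP[\psi_i(\bar z_i)]$ with $\bar z_i$ a subtuple of $\bar x\bar y$. Each such atom, with the partition inherited from $(\bar x, \bar y)$, is stable by the first part (trivially so when it involves only object or only parameter variables), and the class of stable formulas is closed under the continuous connectives, so $\Phi$ is stable for its partition. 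As this holds for every formula and every partition, $T^R$ is stable.

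I expect all the genuine difficulty to have been spent already---on the construction of $\rho$, on the continuity supplied by \fref{prp:ContinuityInOneVariable}, and on the identification of $\hat\rho$ with the $\bP[\varphi]$-definition---so that the present argument is pure assembly. The point that most deserves care is matching what we produced to the precise form of the criterion cited: I must be sure that a continuous, $\rA$-definable predicate returning the correct $\bP[\varphi]$-values really qualifies as the $\bP[\varphi]$-definition in the sense of \cite{BenYaacov-Usvyatsov:CFO}, and that definability over arbitrary parameter sets (which we have, and which is stronger than definability over models) is indeed what the criterion asks for. For the global statement the delicate bookkeeping is that stability of a formula is partition-dependent, so I would be explicit that stability of $T$ yields stability of every first order formula under every partition, and that closure under continuous connectives is applied with the object/parameter split held fixed before quantifier elimination is invoked.
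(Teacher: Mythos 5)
Your proof is correct and follows essentially the same route as the paper's: the paper likewise deduces stability of $\bP[\varphi]$ from the definability of every $\bP[\varphi]$-type (via the continuity of $\hat\rho(\rp(x,\rA),\cdot)$ from \fref{prp:ContinuityInOneVariable} and its identification with the $\bP[\varphi]$-definition), and then obtains stability of $T^R$ by noting that all atomic formulas are stable, stability is preserved by continuous combinations, and quantifier elimination reduces everything to the quantifier-free case. Your explicit care about the partition of variables and about definability over arbitrary sets versus models is a faithful unpacking of what the paper leaves implicit, not a different argument.
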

\begin{proof}
  We have shown that if $\varphi$ is stable then every $\bP[\varphi]$-type is
  definable, so $\bP[\varphi]$ is stable as well.
  Assume now that  $T$ is stable.
  Then every atomic formula of $T^R$ is
  stable, so every quantifier-free formula is stable, and
  by quantifier elimination every formula is stable.
  Therefore $T^R$ is stable.
\end{proof}

Given $\rp(x,\rA)$ where $\rA \subseteq \rCM$, define:
\begin{gather*}
  \hat \rp(x) = \{\bP[\varphi(x,\rb)] = \hat \rho(\rp(x,\rA),\rb)\colon \rb \in \rCM\}.
\end{gather*}

\begin{lem}
  The set of conditions $\hat \rp(x)$ is a $\bP[\varphi]$-type over $\rCM$,
  i.e., $\hat \rp(x) \in S_{\bP[\varphi]}(\rCM)$.
  Moreover, it is consistent with $\rp(x,\rA)$.
\end{lem}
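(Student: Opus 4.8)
The plan is to exhibit a single object witnessing both assertions at once: a full type $\rp^\ast \in S_x(\rCM)$ which extends $\rp(x,\rA)$ and whose $\bP[\varphi]$-reduct is exactly $\hat \rp$. Since a $\bP[\varphi]$-type over $\rCM$ is by definition a \emph{consistent} (i.e.\ realizable) assignment of values $\bP[\varphi(x,\rb)]$ for $\rb \in \rCM$, producing such an $\rp^\ast$ simultaneously shows that $\hat \rp(x) \in S_{\bP[\varphi]}(\rCM)$ and that $\hat \rp(x)$ is consistent with $\rp(x,\rA)$, because any realization of $\rp^\ast$ realizes the full $\rp$ over $\rA$ and carries $\bP[\varphi]$-type $\hat \rp$ over $\rCM$.

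The restriction to $\rA$ is the easy half. By the previous Proposition, for every $\rb \in \rA$ we have $\hat \rho(\rp(x,\rA),\rb) = \bP[\varphi(x,\rb)]^{\rp(x,\rA)}$, so the trace of $\hat \rp$ on parameters from $\rA$ is precisely the $\bP[\varphi]$-reduct of $\rp(x,\rA)$. Hence it suffices to manufacture a full extension of $\rp$ to $\rCM$ that, on its $\bP[\varphi]$-part, agrees with $\hat \rho$; the compatibility over $\rA$ is then automatic and the two requirements of the lemma collapse into the single realization problem.

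For the realization I would pass to the measure side, using the identification $\FR\big(S_{x,W}(T)\big) = S_{x,W}(T^R)$ and the fiber-product machinery of Subsection 5.1. Write $\rr = tp(\rA)$ and $\mathbf{s} = tp(\rCM)$, the latter in a variable tuple $V \supseteq W$; both project to $\rr$ over $S_W(T)$. I would construct $\rp^\ast \in \FR_{\mathbf{s}}\big(S_{x,V}(T)\big)$ as a lift of the fiber product $\rp \otimes \mathbf{s}$ over the common base $S_W(T)$, whose points are pairs $(p,s)$ with $p(x,W) \in S_{x,W}(T)$ and $s(V) \in S_V(T)$ agreeing on $W$. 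Over each such pair one disintegrates: on the $\varphi$-part place the uniform measure on the finite set $\hat P$ of $acl$-definable nonforking $\varphi$-extensions of $p$ computed inside $s$, and complete this to a full type over $V$ by any measurable selection respecting the chosen $\varphi$-part. Finiteness of $\hat P$ is exactly the stability input from Subsection 5.2, and the weight $1/|\hat P|$ is the one built into $\rho$. Then $\rp^\ast$ pushes forward to $\rp$ above $W$ and to $\mathbf{s}$ above $V$; checking marginals one sees that for $\rb \in \rCM$ corresponding to a coordinate of $V$, the induced coupling is $\rp \otimes_W tp(\rb,\rA)$, so $\bP[\varphi(x,\rb)]^{\rp^\ast} = \bE^{\rp \otimes_W tp(\rb,\rA)}[\rho] = \hat \rho(\rp,\rb)$.

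The crux is verification rather than strategy. One must confirm that the fiberwise assignment $(p,s)\mapsto$ (uniform measure on $\hat P$) is Borel measurable, that the disintegration yields a genuine regular Borel measure $\rp^\ast$, and that the $\varphi$-marginal integrates to $\hat \rho(\rp,\rb)$ for every $\rb$ via the marginal consistency of the fiber product. This is precisely the content the conditional-expectation and fiber-product results of Subsection 5.1 were arranged to supply — in particular \fref{prp:ContinuityInOneVariable} applied to $\rho$, whose hypotheses have already been checked — so the present lemma reduces to assembling those ingredients. The main obstacle I anticipate is purely measure-theoretic: ensuring the selection of full completions over each fiber can be made measurably while keeping the prescribed $\varphi$-marginal, and that regularity is preserved under the lift.
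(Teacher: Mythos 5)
Your strategy is genuinely different from the paper's, and the difference is exactly where your proof is incomplete. The lemma is a consistency assertion, hence by compactness a \emph{finite} one, and the paper exploits this: it fixes a finite tuple $\bar\rb$, reduces to the consistency with $T^R$ of $\rp(x,W)\cup\rq(\bar y,W)\cup\{\bP[\varphi(x,y_i)]=\hat\rho(\rp,\rq_i)\}_{i<n_1}$, and verifies the linear positivity criterion of \fref{prp:MeasureEqExtension}: every inequality $\sum m_k\mathbbm{1}_{\chi_k}\geq 0$ valid in $T$ must be respected by the assigned values. That check is done \emph{pointwise} on $\Sigma_{x,\bar y,W}$ under $\bE^{\rp\otimes_W\rq}$, by writing $\rho(p,q)$ as the average over the finite set $\hat P$ and, for each $\hat p\in\hat P$, realizing $\hat p$ together with $p(x,A)$ by a single complete type $r$ of $T$, for which the inequality holds trivially. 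No measure is ever constructed, so no selection, disintegration, or regularity questions arise.

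By contrast, you propose to build the full global nonforking extension $\rp^\ast$ as a measure on $S_{x,V}(T)$ by fiberwise disintegration, and you yourself flag the decisive step --- a measurable choice of completions over each pair $(p,s)$ respecting the $\varphi$-part, with regularity of the lift --- as an ``anticipated obstacle'' rather than proving it. That is the whole content of your argument, not a technicality. Concretely: (a) nothing in this section assumes $L$ countable, so the type spaces need not be metrizable and standard measurable-selection theorems are unavailable; (b) even the Borel-measurability of $(p,s)\mapsto$ (uniform measure on $\hat P$, transported to $\varphi$-types over the coordinates of $V$) is unproven --- the machinery of \fref{lem:DefinableRelativeMultiplicity} controls one instance $\varphi(x,y)$ at a time, whereas your kernel needs the \emph{joint} distribution of $\{\varphi(x,y_i)\}_i$ under the uniform measure on $\hat P$, requiring multi-instance analogues of those definable multiplicity predicates that you have not supplied; (c) to identify the resulting measure with a type via \fref{cor:c-types}-style correspondence you need it to be a \emph{regular} Borel measure, a point the paper deliberately sidesteps (it remarks that even regularity of $\mu\otimes_Z\nu$ is not needed for its route). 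Your outline correctly isolates the right ingredients ($\hat P$, the weight $1/|\hat P|$, $\hat\rho=\bE^{\rp\otimes_W\rq}[\rho]$, and the trace computation over $\rA$), but as written it proves a stronger global statement whose hard kernel is left open, where a finite consistency check via \fref{prp:MeasureEqExtension} suffices and closes the argument.
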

\begin{proof}
  It is enough to prove the moreover part.
  For this matter, it is enough to show that for any finite tuple
  $\bar \rb = \rb_0,\ldots,\rb_{n_1}$ the following is consistent:
  \begin{gather*}
    \rp(x,\rA) \cup \{\bP[\varphi(x,\rb_i)] = \hat \rho(\rp(x,\rA),\rb_i)\colon i < n_1\}.
  \end{gather*}
  Let $\rq(\bar y,W) = tp(\bar \rb,\rA)$,
  $\rq_i = tp(\rb_i,\rA)$.
  Then we need to prove that the following is consistent with $T^R$:
  \begin{gather}
    \label{eq:StationarityTRType}
    \rp(x,W) \cup \rq(\bar y,W) \cup \{\bP[\varphi(x,y_i)] = \hat \rho(\rp,\rq_i)\colon i < n_1\}.
  \end{gather}

  Given a sequence of formulas and integer numbers
  $(\chi_k,m_k)_{k < \ell}$ let $[(\chi_k,m_k)_{k < \ell}]$ denote the formula
  stating that $\sum_{k<\ell} m_k\mathbbm{1}_{\chi_k} \geq 0$ (this is indeed
  expressible by a first order formula).
  By \fref{prp:MeasureEqExtension}, in order to show that
  \fref{eq:StationarityTRType} is consistent with $T^R$ it is enough to
  check that:
  \begin{gather*}
    T \models [(\varphi(x,y_i),f_i)_{i<n_1},(\psi_j(x,\bar w),g_j)_{j<n_2},
    (\chi_k(\bar y,\bar w),h_k)_{k<n_3}] \\
    \Longrightarrow\\
    \sum_{i<n_1} f_i\hat \rho(\rp,\rq_i)
    +    \sum_{j<n_2} g_j\bP[ \psi_j(x,\bar w) ]^\rp
    +    \sum_{k<n_3} h_k\bP[ \chi_k(\bar y,\bar w) ]^\rq
    \geq 0.
  \end{gather*}
  The sum can be rewritten as:
  \begin{gather*}
    \bE^{\rp \otimes_W \rq}\left[
      \sum_{i<n_1} f_i\rho(p,q{\restriction}_{x,y_i,W})
      +    \sum_{j<n_2} g_j\mathbbm{1}_{\psi_j}(x,\bar w)^p
      +    \sum_{k<n_3} h_k\mathbbm{1}_{\chi_k}(\bar y,\bar w)^q
    \right].
  \end{gather*}
  It will therefore be enough to show for every
  $(p,q) \in \Sigma_{x,\bar y,W}$:
  \begin{gather*}
      \sum_{i<n_1} f_i\rho(p,q{\restriction}_{x,y_i,W})
      +    \sum_{j<n_2} g_j\mathbbm{1}_{\psi_j}(x,\bar w)^p
      +    \sum_{k<n_3} h_k\mathbbm{1}_{\chi_k}(\bar y,\bar w)^q
      \geq 0
  \end{gather*}
  We may assume that $q(\bar y,W) = tp(\bar b,A)$ where
  $\bar b,A \subseteq \CM \models T$.
  Let $\hat P \subseteq S_\varphi(\CM)$ be the set of $\varphi$-types compatible with
  $p(x,A)$ and definable over $acl(A)$.
  Then the left hand side becomes:
  \begin{gather*}
    \sum_{i<n_1} f_i\rho_\varphi(p(x,A),b_i)
    +    \sum_{j<n_2} g_j\mathbbm{1}_{\psi_j}(x,\bar w)^p
    +    \sum_{k<n_3} h_k\mathbbm{1}_{\chi_k}(\bar y,\bar w)^q  \\
    = \sum_{i<n_1} f_i\frac{\{\hat p \in \hat P\colon \hat p \vdash \varphi(x,b_i)\}}{|\hat P|}
    +    \sum_{j<n_2} g_j\mathbbm{1}_{\psi_j}(x,\bar w)^p
    +    \sum_{k<n_3} h_k\mathbbm{1}_{\chi_k}(\bar y,\bar w)^q  \\
    = \frac{1}{|\hat P|}\sum_{\hat p \in \hat P}
    \left(
      \sum_{i<n_1} f_i\mathbbm{1}_\varphi(x,b_i)^{\hat p(x)}
      +    \sum_{j<n_2} g_j\mathbbm{1}_{\psi_j}(x,\bar w)^{p(x,W)}
      +    \sum_{k<n_3} h_k\mathbbm{1}_{\chi_k}(\bar y,\bar w)^{q(\bar y,W)}
    \right)
  \end{gather*}
  It will therefore be enough to show that the sum inside the
  parentheses is  positive for every $\hat p \in \hat P$.
  Since $\hat p$ is compatible with $p(x,A)$ there is a complete type
  $\tilde r(x,\CM)$ extending both.
  Let $r(x,\bar b,A)$ be its restriction to the parameters which
  interest us, where as usual $r(x,\bar y,W) \in S_{x,\bar y,W}(T)$.
  Then we have:
  \begin{gather*}
    \sum_{i<n_1} f_i\mathbbm{1}_\varphi(x,b_i)^{\hat p(x)}
    +    \sum_{j<n_2} g_j\mathbbm{1}_{\psi_j}(x,\bar w)^{p(x,W)}
    +    \sum_{k<n_3} h_k\mathbbm{1}_{\chi_k}(\bar y,\bar w)^{q(\bar y,W)} \\
    = \sum_{i<n_1} f_i\mathbbm{1}_\varphi(x,y_i)^r
    +    \sum_{j<n_2} g_j\mathbbm{1}_{\psi_j}(x,\bar w)^r
    +    \sum_{k<n_3} h_k\mathbbm{1}_{\chi_k}(\bar y,\bar w)^r.
  \end{gather*}
  Since $r$ is a type of $T$ this is indeed always positive.
\end{proof}

\begin{thm}
  Assume $\varphi$ is a stable formula of $T$, and let
  $\rA \subseteq \rCM \models T^R$.
  Then every $\bP[\varphi]$-type over $\rA$ is stationary.
  If $T$ is stable then every type over $\rA$ is stationary.
\end{thm}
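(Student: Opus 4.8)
The plan is to read off stationarity directly from the preceding Lemma, which already produces a canonical global extension whose $\bP[\varphi]$-definition is exhibited as an $\rA$-definable predicate. Recall that in a stable continuous theory a $\bP[\varphi]$-type over $\rA$ is stationary exactly when it has a unique nonforking global extension, and that this holds as soon as it admits a global extension whose definition lives over $\rA$ rather than merely over $\operatorname{acl}(\rA)$.

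First I would fix $\rp(x,\rA) \in S_{\bP[\varphi]}(\rA)$ and apply the preceding Lemma to obtain the global $\bP[\varphi]$-type $\hat\rp(x) \in S_{\bP[\varphi]}(\rCM)$ extending $\rp$. By construction $\hat\rp$ is defined by the single condition $\bP[\varphi(x,\rb)] = \hat\rho(\rp(x,\rA),\rb)$, and we have already identified $\rb \mapsto \hat\rho(\rp(x,\rA),\rb)$ with an $\rA$-definable predicate (this being the payoff of \fref{prp:ContinuityInOneVariable}). Hence $\hat\rp$ is definable over $\rA$; in particular its definition is fixed by every automorphism in $\operatorname{Aut}(\rCM/\rA)$.

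Next I would conclude uniqueness via the standard stability dichotomy. Since $\bP[\varphi]$ is a stable predicate of $T^R$ (by the Theorem above), every nonforking global $\bP[\varphi]$-extension of $\rp$ is definable, the set of these extensions forms a single orbit of $\operatorname{Aut}(\rCM/\rA)$, and a global type definable over $\rA$ is automatically nonforking over $\rA$. As $\hat\rp$ lies in this orbit while being fixed by the whole group, transitivity forces the orbit to be the singleton $\{\hat\rp\}$. Thus $\rp$ has a unique nonforking global extension and is therefore stationary.

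For the final assertion, suppose $T$ is stable, so that every formula $\varphi(x,y)$ of $T$ is stable and hence every atomic predicate $\bP[\varphi]$ of $T^R$ is stable. By quantifier elimination every $T^R$-formula is built from such atomic predicates, so a global type over $\rCM$ is nonforking over $\rA$ precisely when each of its local $\bP[\varphi]$-definitions is over $\rA$. The first part of the theorem guarantees that each such local definition is determined over $\rA$, so the full global type is uniquely determined; that is, every complete type over $\rA$ is stationary. The only place where genuine work is concealed is the passage from the Lemma to an $\rA$-definable global extension: once that extension is in hand, stationarity follows formally from the stability of $\bP[\varphi]$ together with quantifier elimination.
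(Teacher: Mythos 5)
Your proof is correct and follows essentially the same route as the paper's: the paper's entire proof is the observation that the preceding Lemma yields a global $\bP[\varphi]$-extension of $\rp$ that is definable over $\rA$ itself rather than merely over $\operatorname{acl}(\rA)$, and you simply make explicit the standard local-stability deduction (transitivity of $\operatorname{Aut}(\rCM/\rA)$ on nonforking extensions plus an $\rA$-fixed point forces uniqueness) and the quantifier-elimination reduction for the full-type case, both of which the paper leaves implicit.
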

\begin{proof}
  We have shown that if $\rp(x) \in S_x(\rA)$ then there exists a
  $\bP[\varphi]$-type $\hat \rp(x) \in S_{\bP[\varphi]}(\rCM)$ which is definable over
  $\rA$ (rather than merely over
  $acl(\rA)$) and compatible with $\rp$.
\end{proof}
Here $\rA$ consists of random elements in sorts of $T$.
The theory $T^R$ necessarily introduces new imaginary sorts (even if
$T$ admits elimination of imaginaries) and types over elements from
these new sorts need not be stationary.

In the course of the proof we have given an explicit characterization
of the unique non forking extension of a type in $T^R$.
Let us restate this characterization in a slightly modified fashion.
First of all we observe that the entire development above goes through
for a formula of the form $\varphi(x,y,\bar w)$ where $\bar w \subseteq W$ is a
fixed tuple of parameter variables (alternatively, we could name the
tuple $\bar w$ by new constants).
Define $\rho_\varphi\colon \Sigma_{x,y,W} \to [0,1]$ and $\hat \rho_\varphi$ accordingly.

\begin{cor}
  Let $\rc,\rb,\rA \subseteq \rCM$.
  Then $\rc \ind_\rA \rb$ if and only if
  for every $\bar \ra \subseteq \rA$ with corresponding $\bar w \subseteq W$, and for
  every formula $\bP[\varphi(x,y,\bar w)]$:
  \begin{gather*}
    \bP[\varphi(\rc,\rb,\bar \ra)] = \hat \rho_\varphi(tp(\rc,\rA),tp(\rb,\rA)).
  \end{gather*}
\end{cor}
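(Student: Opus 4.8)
The plan is to read off the equivalence from the explicit description of the unique non-forking extension obtained in the preceding Lemma and Theorem, combined with the stability of $T^R$ and quantifier elimination. First I would recall that $T^R$ is stable and that, by the stationarity theorem just proved, $\rp := tp(\rc/\rA)$ has a unique non-forking extension to any larger parameter set, namely the restriction of the global type $\hat\rp$ that is definable over $\rA$. Hence $\rc \ind_\rA \rb$ holds exactly when $tp(\rc/\rA\rb)$ coincides with $\hat\rp{\restriction}\rA\rb$; since $\ind$ is symmetric in the stable theory $T^R$, the roles of $\rc$ and $\rb$ may be interchanged, so there is no loss in testing the extension on the $\rc$-side.

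Next I would invoke quantifier elimination in the single-sorted presentation of $T^R$: every formula is built from the atomic predicates $\bP[\varphi(\cdot)]$, and these separate types. Consequently $tp(\rc/\rA\rb)$ is determined by the values $\bP[\varphi(\rc,\rb,\bar\ra)]$, where $\varphi(x,y,\bar w)$ ranges over first-order formulas and $\bar\ra \subseteq \rA$ over parameter tuples, the slot $y$ receiving $\rb$ and $\bar w$ receiving $\bar\ra$. Therefore $tp(\rc/\rA\rb) = \hat\rp{\restriction}\rA\rb$ precisely when, for all such $\varphi$ and $\bar\ra$, the value $\bP[\varphi(\rc,\rb,\bar\ra)]$ equals the value assigned at $\rb$ by the $\bP[\varphi]$-definition of $\hat\rp$.

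The identification of that prescribed value is exactly where the earlier work pays off: the preceding Lemma (in its $\varphi(x,y,\bar w)$ version) shows that $\hat\rp(x) = \{\bP[\varphi(x,\rb)] = \hat\rho_\varphi(\rp,\rb)\}$ is a consistent $\bP[\varphi]$-type over $\rCM$ extending $\rp$ and definable over $\rA$, so $\hat\rho_\varphi(\rp,y)$ is precisely the $\bP[\varphi]$-definition of the non-forking extension. Recalling that $\hat\rho_\varphi(\rp,\rb) = \hat\rho_\varphi(tp(\rc/\rA),tp(\rb/\rA))$ by definition then yields the displayed identity, giving the equivalence in both directions. I expect the only delicate point to be the second step, namely verifying that the predicates $\bP[\varphi(x,y,\bar w)]$ with this fixed variable pattern genuinely exhaust the stable formulas governing forking of $\rc$ over $\rb$ relative to $\rA$; this rests on the observation that the atomic $\bP[\psi]$-formulas already separate types and hence suffice to detect forking, after which both directions close routinely.
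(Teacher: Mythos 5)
Your proof is correct and follows essentially the same route as the paper, which in fact states this corollary without a separate proof precisely because it is a restatement of what the preceding Lemma and stationarity Theorem establish: stationarity identifies the unique non-forking extension of $tp(\rc,\rA)$ as the global type whose $\bP[\varphi]$-definitions are the predicates $\hat\rho_\varphi(tp(\rc,\rA),\cdot)$, and quantifier elimination in the single-sorted language (the atomic $\bP[\varphi]$-formulas separate types) reduces equality of $tp(\rc/\rA\rb)$ with that restriction to the displayed identities. Your aside about symmetry of $\ind$ is superfluous---the characterization already tests on the $\rc$-side---but harmless.
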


\providecommand{\bysame}{\leavevmode\hbox to3em{\hrulefill}\thinspace}
\providecommand{\MR}{\relax\ifhmode\unskip\space\fi MR }
\providecommand{\MRhref}[2]{%
  \href{http://www.ams.org/mathscinet-getitem?mr=#1}{#2}
}
\providecommand{\href}[2]{#2}

\end{document}